\newtheorem{theorem}{Theorem}[section]
\newtheorem{lemma}[theorem]{Lemma}
\newtheorem{proposition}[theorem]{Proposition}
\newtheorem{corollary}[theorem]{Corollary}
\theoremstyle{remark}
\newtheorem*{remark}{Remark}
\newcommand{\bC}{\mathbb{C}}
\newcommand{\bG}{\mathbb{G}}
\newcommand{\bN}{\mathbb{N}}
\newcommand{\bP}{\mathbb{P}}
\newcommand{\bQ}{\mathbb{Q}}
\newcommand{\bR}{\mathbb{R}}
\newcommand{\bZ}{\mathbb{Z}}
\newcommand{\cA}{{\mathcal{A}}}
\newcommand{\cB}{{\mathcal{B}}}
\newcommand{\cC}{{\mathcal{C}}}
\newcommand{\cD}{{\mathcal{D}}}
\newcommand{\cG}{{\mathcal{G}}}
\newcommand{\cL}{{\mathcal{L}}}
\newcommand{\cO}{{\mathcal{O}}}
\newcommand{\cS}{{\mathcal{S}}}
\newcommand{\cU}{{\mathcal{U}}}
\newcommand{\cZ}{{\mathcal{Z}}}
\newcommand{\gp}{\mathfrak{p}}
\newcommand{\tP}{\tilde{P}}
\newcommand{\ualpha}{\underline{\alpha}}
\newcommand{\uP}{\mathbf{P}}
\newcommand{\uQ}{\mathbf{Q}}
\newcommand{\utheta}{\underline{\theta}}
\newcommand{\uX}{\mathbf{X}}
\newcommand{\uZ}{\underline{Z}}
\newcommand{\Cmult}{\bC^*} 
\newcommand{\disp}{\displaystyle}
\renewcommand{\div}{\mathrm{div}}
\newcommand{\et}{\quad\mbox{and}\quad}
\newcommand{\fleche}{\longrightarrow}
\newcommand{\Ga}{\bG_\mathrm{a}}
\newcommand{\Gm}{\bG_\mathrm{m}}
\newcommand{\Qbar}{\overline{\bQ}}
\newcommand{\Res}{\mathrm{Res}}
\DeclareMathOperator{\dist}{dist}
\begin{document}

\baselineskip=15pt 

\title[A small value estimate]
{A small value estimate for $\Ga\times \Gm$}
\author{Damien ROY}
\address{
   D\'epartement de Math\'ematiques\\
   Universit\'e d'Ottawa\\
   585 King Edward\\
   Ottawa, Ontario K1N 6N5, Canada}
\email{droy@uottawa.ca}
\subjclass[2000]{Primary 11J85; Secondary 11J81}
\thanks{Work partially supported by NSERC and CICMA}

\begin{abstract}
A small value estimate is a statement providing necessary conditions for the existence of certain sequences of non-zero polynomials with integer coefficients taking small values at points of an algebraic group.  Such statements are desirable for applications to transcendental number theory to analyze the outcome of the construction of an auxiliary function.  In this paper, we present a result of this type for the product $\Ga\times\Gm$ whose underlying group of complex points is $\bC\times \Cmult$.  It shows that if a certain sequence of non-zero polynomials in $\bZ[X_1,X_2]$ take small values at a point $(\xi,\eta)$ together with their first derivatives with respect to the invariant derivation $\partial/\partial X_1 + X_2 (\partial/\partial X_2)$, then both $\xi$ and $\eta$ are algebraic over $\bQ$.  The precise statement involves growth conditions on the degree and norm of these polynomials as well as on the absolute values of their derivatives.  It improves on a direct application of Philippon's criterion for algebraic independence and compares favorably with constructions coming from Dirichlet's box principle.
\end{abstract}

\maketitle

%
%

\section{Introduction}
\label{sec:intro}

In continuation with \cite{R2008} and \cite{R2010}, the aim of the present paper is to develop new tools for algebraic independence in situations where the traditional combination of a criterion for algebraic independence and of a zero estimate does not apply.  The small value estimates that we are looking for, aim at extracting as much information as possible from the global data of a sequence of auxiliary polynomials taking many small values at points of a finitely generated subgroup of a commutative algebraic group.  An ultimate goal would be to prove the conjectural small value estimates proposed in \cite{R2001} and \cite{R2002} and shown there to be equivalent respectively to the standard conjecture of Schanuel and its elliptic analog.  In \cite{R2010} and \cite{R2008}, we established some small value estimates respectively for the additive group $\bC=\Ga(\bC)$ and the multiplicative group $\Cmult=\Gm(\bC)$.  The present paper deals with the group
\[
 \cG = \bC\times\Cmult = (\Ga\times\Gm)(\bC),
\]
and considers a sequence of auxiliary polynomials in $\bZ[X_1,X_2]$ taking small values at a fixed point $(\xi,\eta)\in\cG$ together with some of their derivatives with respect to the $\cG$-invariant differential operator
\[
  \cD_1 = \frac{\partial}{\partial X_1} + X_2 \frac{\partial}{\partial X_2 }.
\]
Upon defining the norm $\|P\|$ of a polynomial $P$ as the largest absolute value of its coefficients, and upon denoting by $\lfloor x \rfloor$ the integer part of a real number $x$, our main result reads as follows:

\begin{theorem}
\label{intro:thm:main}
Let $(\xi,\eta)\in\cG$, and let $\beta,\tau,\nu\in\bR$ with
\begin{equation}
 \label{intro:thm:main:eq1}
 1\le \tau<2,
 \quad
 \beta >\tau
 \et
 \nu > 2+\beta-\tau + \frac{(\tau-1)(2-\tau)}{\beta+1-\tau}.
\end{equation}
Suppose that, for each sufficiently large positive integer $D$, there exists a non-zero polynomial $P_D\in\bZ[X_1,X_2]$ of degree $\le D$ and norm $\le \exp(D^\beta)$ such that
\begin{equation}
 \label{intro:thm:main:eq2}
 \max_{0\le i<3 \lfloor D^\tau \rfloor} |\cD_1^i P_D(\xi,\eta)|
    \le \exp(-D^\nu).
\end{equation}
Then, we have $\xi,\eta\in\Qbar$ and moreover $\cD_1^i P_D(\xi,\eta)=0$
$(0\le i< 3\lfloor D^\tau \rfloor)$ for each sufficiently large integer $D$.
\end{theorem}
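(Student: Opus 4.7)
The plan is to argue by contradiction, in the spirit of the author's earlier small value estimates for $\Ga$ \cite{R2010} and $\Gm$ \cite{R2008}. Suppose that the hypotheses hold for infinitely many $D$, yet for one of these either $\xi$ or $\eta$ is transcendental, or some derivative $\cD_1^i P_D(\xi,\eta)$ with $i<3\lfloor D^\tau\rfloor$ is non-zero. The goal is to obtain a contradiction by combining analytic upper bounds on the values of $P_D$ along the orbit of $(\xi,\eta)$ under the one-parameter subgroup generated by $\cD_1$ with arithmetic lower bounds coming from Liouville inequalities and from the one-variable small value estimates established in \cite{R2010} and \cite{R2008}.

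The main analytic input is the entire function $F_D(z)=P_D(\xi+z,\eta e^z)$, whose $i$-th derivative at the origin is $\cD_1^i P_D(\xi,\eta)$, and which has order of growth at most $1$ and type $O(D)$ thanks to the degree and norm bounds on $P_D$. A Schwarz-type lemma applied on a disk of judiciously chosen radius converts the $3\lfloor D^\tau\rfloor$ small Taylor coefficients of $F_D$ at $0$ into a uniform upper bound of the form $|F_D(z)|\le \exp(-D^{\nu_1})$ on a smaller disk, for an exponent $\nu_1$ slightly smaller than $\nu$. This in turn yields small values of $P_D$, and of a controlled number of its $\cD_1$-derivatives, at many new points of the orbit of $(\xi,\eta)$, providing the raw material for the next step.

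The algebraic step is then an elimination argument. Comparing two polynomials $P_D$ and $P_{D'}$ for close values of the parameter, the resultant $\Res_{X_1}(P_D,P_{D'})$ (respectively $\Res_{X_2}(P_D,P_{D'})$) yields, provided the two polynomials have no common factor, a non-zero univariate polynomial whose value at $\eta$ (respectively $\xi$) can be bounded above using the analytic control of the previous step and below by Liouville if $\eta$ (respectively $\xi$) is algebraic. Inserting these bounds into the one-variable small value estimates for $\Gm$ and $\Ga$ forces both coordinates to be algebraic. The ``moreover'' assertion then follows from a final Liouville inequality: once $(\xi,\eta)\in\Qbar\times\Qbar$, any non-zero $\cD_1^i P_D(\xi,\eta)$ is bounded below by a quantity of the form $\exp(-cD^\beta)$, which contradicts \eqref{intro:thm:main:eq2} since $\nu>\beta$.

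The main obstacle, I expect, will be to guarantee the coprimality of $P_D$ and $P_{D'}$ and to control the common factors that may arise: standard divisibility arguments suggest that any common factor itself inherits small-value hypotheses, so an induction on degree should allow one to reduce to the coprime case. The second, equally delicate point is the optimization of the radii in the Schwarz lemma and of the degrees in the elimination step so that the final numerical condition on $\nu$ comes out exactly in the form \eqref{intro:thm:main:eq1}; this is the source of the correction term $(\tau-1)(2-\tau)/(\beta+1-\tau)$, which is strictly positive for $1<\tau<2$ and vanishes at the endpoints $\tau=1$ and $\tau=2$. Extracting the algebraicity of \emph{both} $\xi$ and $\eta$ from a single sequence $(P_D)$, rather than from two separate sequences as in the earlier papers, is the principal new technical difficulty.
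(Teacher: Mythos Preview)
Your sketch bears little resemblance to the paper's proof and, as it stands, has genuine gaps. The paper never reduces to the one-variable small value estimates of \cite{R2008} or \cite{R2010}. It works throughout in $\bP^2_\bQ$ via heights with respect to convex bodies. The key new ingredient is a multiplicity estimate for the \emph{resultant of three forms in three variables} (Theorem~\ref{result:thm:mult} and its Corollary~\ref{result:cor:1pt}): if $I=I^{(\gamma,T)}$, then $\Res_D$ vanishes to order $\ge T$ on $(I_D)^3$. Combined with the interpolation estimate of Proposition~\ref{basic:prop:interp}, this gives $h_{\cC_D}(\bP^2)\le -TU+O(D^2Y)$ for the convex body $\cC_D$ of polynomials with small first $T$ derivatives at $(1,\xi,\eta)$. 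Two intersections then produce a zero-dimensional $Z\subset\bP^2_\bQ$ with $h_{\cC_D}(Z)$ very negative, whose ideal contains all $\cD^i\tP_D$ for $0\le i<2T$. The correction term $(\tau-1)(2-\tau)/(\beta+1-\tau)$ comes from a descent comparing two scales: one chooses $D^*<D$ maximal so that the ideal of $Z$ does \emph{not} contain all $\cD^i\tP_{D^*}$, bounds $\deg(Z)$ and $h(Z)$ in terms of $D^*$ via an intersection-multiplicity argument (Proposition~\ref{constr:prop:estimates_deg_height}), and then plays off the lower bound $|F_Z(\cD^{i_0}\tP_{D^*})|\ge 1$ for the Chow form of $Z$ against upper bounds for $|P^*(\ualpha)|$ in terms of $\dist(\alpha,(1{:}\gamma))$ and $\dist(\alpha,A_\gamma)$ (Section~\ref{sec:distance}).

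In your outline, the elimination step is where things break. Even granting coprimality, $R(X_2):=\Res_{X_1}(P_D,P_{D'})$ has degree $\asymp DD'$ and $\log$-norm $\asymp D'D^\beta+D{D'}^\beta$, so the resulting one-variable data do not lie in the parameter regime of \cite{R2008}; you would be feeding a single polynomial per pair $(D,D')$ into a statement that requires a full sequence with tightly matched exponents. The Schwarz step likewise does not help the way you hope: the ``many new points'' $(\xi+z,\eta e^z)$ it produces are transcendental and cannot be used in a Liouville inequality, nor do they match the translate-by-a-fixed-point structure required by the $\Ga$ and $\Gm$ estimates. Finally, nothing in your optimization explains why the threshold should be exactly $2+\beta-\tau+(\tau-1)(2-\tau)/(\beta+1-\tau)$; in the paper this quantity falls out of the comparison between the two scales $D$ and $D^*$ in Step~5, and there is no analogue of that mechanism in your plan. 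The only part of your proposal that matches the paper is the final Liouville argument for the ``moreover'' clause.
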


Formally, the statement of the theorem can be simplified by omitting the constraint $\tau<2$
from \eqref{intro:thm:main:eq1}, because it is a consequence of the other conditions.
Indeed, it follows from the main result of Tijdeman in \cite{Ti} or an earlier result
of Mahler \cite[p.~88, Formula (7)]{Mah} that, for given $(\xi,\eta)\in \cG$, $\tau\ge 2$,
$\nu>2$, and for any sufficiently large positive integer $D$, there exists no polynomial
$P_D\in \bC[X_1,X_2]$ of degree $\le D$ and norm $\ge 1$ which satisfies
\eqref{intro:thm:main:eq2}.  In particular, for the same choice of parameters, there
exists no non-zero $P_D\in\bZ[X_1,X_2]$ of degree $\le D$ satisfying
\eqref{intro:thm:main:eq2}.  Nevertheless, keeping the condition $\tau<2$ makes easier
to compare the statement of the theorem with the constructions described below.

First, note that, in the conclusion of the theorem, the vanishing of the derivatives
$\cD_1^iP_D$ at the point $(\xi,\eta)$ follows from the assertion that this point is
algebraic.  Indeed, for each sufficiently large integer $D$, the polynomials $\cD_1^iP_D\in\bZ[X_1,X_2]$ with $0\le i < 3\lfloor D^\tau \rfloor$ have length at
most $\exp(2D^\beta)$.  Since their absolute values at the point $(\xi,\eta)$ are
bounded above by $\exp(-D^\nu)$ and since $\nu>\beta$, Liouville's inequality implies
that they all vanish at that point for $D$ large enough, when $\xi$ and $\eta$ are
algebraic.  Conversely, the hypotheses of the theorem are fulfilled by any algebraic
point $(\xi,\eta) \in \Qbar\times\Qbar^{\,*}$ because, for such a point $(\xi,\eta)$
and any choice of parameters $\beta,\tau\in\bR$ with
\[
 0\le \tau<2 \et \beta>\max\{0,2\tau-2\},
\]
an application of Thue-Siegel's lemma shows that, for any sufficiently large positive
integer $D$, there exists a non-zero polynomial $P\in\bZ[X_1,X_2]$ of degree $\le D$
and norm $\le \exp(D^\beta)$ such that $\cD_1^i P_D(\xi,\eta)=0$ for $0\le i<3\lfloor
D^\tau \rfloor$.

It is also interesting to compare the statement of the theorem to constructions that
can be achieved for an arbitrary point of $\cG$ using Dirichlet box principle.  For
any choice of $(\xi,\eta)\in\cG$ and $\beta, \tau, \nu \in \bR$ with
\[
 0 \le \tau <2, \quad \beta>\max\{1,\tau\} \et \nu < 2+\beta-\tau,
\]
a simple application of that principle shows the existence of a sequence of non-zero
polynomials $(P_D)_{D\ge 1}$ in $\bZ[X_1,X_2]$ with $\deg(P_D)\le D$ and $\|P_D\|
\le \exp(D^\beta)$ satisfying \eqref{intro:thm:main:eq2} for each large enough $D$.
Thus, if it is possible to reduce the lower bound on $\nu$ in
\eqref{intro:thm:main:eq1}, it could not be by more than
\[
 \frac{(\tau-1)(2-\tau)}{\beta+1-\tau} \le (\tau-1)(2-\tau) \le \frac{1}{4}.
\]
On the other hand, compared to the lower bound $\nu > 2+\beta$ required by a direct
application of Philippon's criterion \cite[Theorem 2.11]{Ph}, our condition on
$\nu$ represents a gain of at least $\tau-1/4$.

Although there is room for possibly improving the conditions on $\beta$ and $\nu$ in Theorem \ref{intro:thm:main}, the restriction $\tau\ge 1$ is crucial in order to be able to conclude that $\xi$ and $\eta$ are algebraic over $\bQ$ or even that they are algebraically dependent over $\bQ$.  This follows from a construction of Khintchine adapted by Philippon \cite[Appendix]{Ph} which shows that, for any sequence of positive real numbers $(\psi_D)_{D\ge 1}$, there exist algebraically independent numbers $\xi,\eta\in\bC$ and a sequence of non-zero linear forms $(L_D)_{D\ge 1}$ in $\bZ + \bZ X_1 + \bZ X_2$ satisfying $\|L_D\|\le D$ and $|L_D(\xi,\eta)| \le \psi_D$ for each $D\ge 1$.  Here, we apply this result with $\psi_D=\exp(-2D^{\nu-\tau})$ assuming simply that $0\le \tau<1$, $\beta > \tau$ and $\nu > \tau$.  For the corresponding point $(\xi,\eta)\in\cG$ and the corresponding sequence of linear forms $(L_D)_{D\ge 1}$, we set $P_D=L_D^{\lfloor 4D^\tau \rfloor}$ for each $D\ge 1$.  Then, for each sufficiently large $D$, the polynomial $P_D\in\bZ[X_1,X_2]$ is non-zero, has degree $\le D$, norm $\le \exp(D^\beta)$ and satisfies the condition \eqref{intro:thm:main:eq2}.  However, both $\xi$ and $\eta$ are transcendental over $\bQ$.

The proof of our main result combines techniques for multiplicity and zero estimates introduced by W.~D.~Brownawell and D.~W.~Masser in \cite{BM} and by D.~W.~Masser in \cite{Ma}, together with techniques of elimination theory developed by Yu.~V.~Nesterenko \cite{Ne1, Ne2} and P.~Philippon \cite{Ph}, and formalized in \cite{LR} in joint work with M.~Laurent.  We give below a short outline of that proof.  The complete argument occupies Section \ref{sec:proof}.

Arguing by contradiction, we first replace each polynomial $P_D$ by an appropriate homogeneous polynomial $\tP_D$ of degree $D$, and replace the differential operator $\cD_1$ by a corresponding homogeneous operator $\cD$ on $\bC[\uX] := \bC[X_0,X_1,X_2]$.  For each integer $D$, we also define a convex body $\cC_D$ of the homogeneous part $\bC[\uX]_D$ of $\bC[\uX]$ of degree $D$.  It consists of all polynomials of $\bC[\uX]_D$ whose derivatives with respect $\cD$ are small up to order $T:=\lfloor D^\tau \rfloor$ at the point $(1,\xi,\eta)$, with upper bounds on the norms of these polynomials and on the absolute values of their derivatives chosen so that $\cD^i\tP_D$ belongs to $\cC_D$ for each integer $i$ with $0\le i < 2T$.

In Section \ref{sec:constr}, we provide an estimate for the height of $\bP^2$ with respect to convex bodies of this type.  It relies on two types of result.  The first one is a general lower bound for the multiplicity of the resultant proved in Section \ref{sec:result}.  This result is of independent interest and can be read independently of the rest of the present paper.  It implies in particular that the resultant of $\bC[\uX]$ in degree $D$ vanishes with multiplicity at least $T$ at each triple of polynomials of $\bC[\uX]_D$ whose derivatives with respect to $\cD$ vanish up to order $T$ at $(1,\xi,\eta)$.  The other result is an interpolation estimate proved in Section \ref{sec:basic}.  It provides an upper bound for the smallest norm of a homogeneous polynomial with prescribed first $T$ derivatives with respect to $\cD$ at the point $(1,\xi,\eta)$.

Based on the above, the results of Section \ref{sec:constr} also provide, by  successive intersections in $\bP^2$ and suitable selection of irreducible components, a zero-dimensional algebraic subset $Z$ of $\bP^2$ defined over $\bQ$ and irreducible over $\bQ$ with small height with respect to $\cC_D$, and whose associated prime ideal in $\bQ[\uX]$ contains $\cD^i\tP_D$ for $i=0,1,\dots,2T-1$.  This last information leads to a posteriori estimates for the degree and the standard height of $Z$.

The most delicate part of the proof lies in the final descent argument.  Denote by $D^*$ the largest positive integer less than $D$ for which the ideal of $Z$ does not contain all derivatives $\cD^i\tP_{D^*}$ with $0\le i < 2\lfloor(D^*)^\tau\rfloor$.  Then, the Chow form of $Z$ in degree $D^*$ does not vanish in at least one of these derivatives $P^*$.  Its absolute value at $P^*$, being a positive integer, is then bounded below by $1$. On the other hand, the same number is a constant times the product of the absolute values of $P^*$ at representatives $\ualpha\in\bC^3$ with norm $1$ of the points $\alpha$ of $Z(\bC)$, where the constant depends only on $D^*$, the degree of $Z$ and the standard height of $Z$.  In Section \ref{sec:distance}, the absolute values $|P^*(\ualpha)|$ are estimated from above in terms of the projective distance between $\alpha$ and $(1:\xi:\eta)$ as well as the distance between $\alpha$ and the analytic curve $\{(1: \xi+z: \eta e^z)\,;\,z\in\bC\}$.  Another result of Section \ref{sec:distance} provides an estimate from below for the height of $Z$ with respect to $\cC_D$, in term of these distances.  Putting all together and choosing $D$ large enough leads to the required contradiction.  This last step is an adaptation of the idea behind Philippon's metric B\'ezout's inequality \cite[Prop.~2.5]{Ph}.

To conclude this introduction, we would like to add that the present method is not restricted to dealing with only one point.  Its main limitation lies instead in the fact that, at each degree $D$, the number of conditions imposed on $P_D$ needs to be less than the dimension of the space of polynomials of degree at most $D$. The multiplicity estimate of Section \ref{sec:result} is sufficiently general to deal efficiently with polynomials $P_D \in \bZ[X_1,X_2]$ of degree $\le D$ satisfying conditions of the form
\[
 |\cD_1^i P_D(m_1\xi_1+\cdots+m_s\xi_s,\eta_1^{m_1}\cdots\eta_s^{m_s})| \le \exp(-D^\nu)
 \quad (0\le i\le D^\tau, \ 0\le m_1,\dots,m_s\le D^\sigma)
\]
provided that $\tau+s\sigma<2$.  Under the latter assumption, it is also possible to interpolate the above values of $P_D$ using a generalization of the interpolation result of Section \ref{sec:basic} which, for simplicity, we did not include here.  However, the condition $\tau+s\sigma<2$, which is natural to impose when $s=0$, becomes very restrictive for $s\ge 1$.  In particular, it requires a value of $\nu$ that is larger than the one available from the conjectural small value estimates of \cite{R2001}, in order to insure that the logarithmic height of $\bP^2$ with respect to the appropriate convex body is negative.  As a consequence, the conclusion of Theorem \ref{intro:thm:main} is stronger than what one would expect from these conjectures.  Another but less fundamental difference is that Theorem \ref{intro:thm:main} does not impose separate upper bounds on the degrees of $P_D$ in $X_1$ and $X_2$.  Such refinement would have required to work within the theory of multi-projective elimination initiated by P.~Philippon in \cite{Ph1994} and developed by G.~R\'emond in \cite{Re}.

%
%
\section{Preliminaries}
\label{sec:prelim}

In this section, we introduce most of the notation and results of elimination theory that we will need in the following.  The formalism that we use is a simplified version of that of \cite{LR}.  Throughout this paper, $\bN$ stands for the set of non-negative integers, and $\bN^*$ for the set of positive integers.

We fix an integer $m\ge 1$ and for each subring $A$ of $\bC$, we denote by $A[\uX]$ the polynomial ring $A[X_0,\dots,X_m]$ in $m+1$ variables over $A$.  For each $D\in\bN^*$, we denote by $A[\uX]_D$ the homogeneous part of degree $D$ of $A[\uX]$.  More generally, when $I$ is an homogeneous ideal of $A[\uX]$, we write $I_D$ for its homogeneous part of degree $D$. For each $(m+1)$-tuple $\nu=(\nu_0,\dots,\nu_m)$ in $\bN^{m+1}$, we put $|\nu| = \nu_0+\cdots+\nu_m$ and write $\uX^\nu$ to denote the monomial $X_0^{\nu_0}\cdots X_m^{\nu_m}$.  For each point $\ualpha=(\alpha_0,\dots,\alpha_m)\in \bC^{m+1}$, we define the \emph{norm} of $\ualpha$ by $\|\ualpha\| = \max\{|\alpha_0|, \dots, |\alpha_m|\}$ and denote the corresponding point of $\bP^m(\bC)$ by $(\alpha_0:\cdots:\alpha_m)$.  Similarly, we define the \emph{norm} $\|P\|$ of a polynomial $P\in\bC[\uX]$ as the largest absolute value of its coefficients.

We also denote by $\bP^m_\bQ$ the projective $m$-space over $\bQ$.  By a \emph{subvariety} of $\bP^m_\bQ$ we mean the closed integral subscheme of $\bP^m_\bQ$ determined by a homogeneous prime ideal $\gp$ of $\bQ[\uX]$ distinct from $(X_0,\dots,X_m)$.  Then, the set of points of $Z$ with values in $\bC$, denoted $Z(\bC)$, is the set of common zeros of the elements of $\gp$ in $\bP^m(\bC)$.  For any set $S$ of homogeneous polynomials of $\bQ[\uX]$, we denote by $\cZ(S)$ the closed subscheme of $\bP_\bQ^m$ determined by the ideal of $\bQ[\uX]$ generated by $S$.  Then, $\cZ(S)(\bC)$ is the set of common zeros of the elements of $S$ in $\bP^m(\bC)$.

Let $Z$ be a subvariety of $\bP^m_\bQ$ and let $D$ be a positive integer.  Putting $t=\dim(Z)$, the first section of \cite{Ph} shows the existence of a polynomial map
\[
 F \colon \bC[\uX]_D^{t+1} \fleche \bC
\]
whose zeros are the $(t+1)$-tuples of polynomials $(P_0,\dots,P_t) \in \bC[\uX]_D^{t+1}$ having at least one common zero on $Z(\bC)$, and whose underlying polynomial relative to the basis of $(t+1)$-tuples of monomials $(\uX^{\nu^{(0)}}, \dots, \uX^{\nu^{(t)}})$ with $|\nu^{(0)}|=\cdots=|\nu^{(t)}|=D$ has coefficients in $\bZ$ and is irreducible over $\bZ$.  Such a map $F$ is unique up to multiplication by $\pm 1$. Moreover, $F$ is separately homogeneous of degree $D^t\deg(Z)$ in each of its $t+1$ polynomial arguments.  We call it a \emph{Chow form} of $Z$ in degree $D$.  We define the (logarithmic) \emph{height} $h(Z)$ of $Z$ as the logarithm of the norm of the polynomial underlying its Chow form in degree $1$.  For example, the Chow form of $\bP^m$ in degree $D$ is the \emph{resultant} viewed as a polynomial map $\Res_D \colon \bC[\uX]_D^{m+1} \to \bC$.  As such, it is homogeneous of degree $D^m$ on each factor.  For $D=1$, this resultant is simply the determinant of $m+1$ linear forms in $m+1$ variables.  As a polynomial in the $(m+1)^2$ coefficients of these linear forms, its non-zero coefficients are $\pm 1$ and thus we get $h(\bP^m)=0$.

We define a \emph{convex body} of $\bC[\uX]_D$ to be a compact subset $\cC$ of $\bC[\uX]_D$ with non-empty interior which satisfies $\lambda P + \mu Q \in \cC$ for any $P,Q\in \cC$ and any $\lambda,\mu\in\bC$ with $|\lambda| + |\mu| \le 1$.  Then, for a subvariety $Z$ of $\bP^m_\bQ$ of dimension $t$ and its corresponding Chow form $F$, we define
\[
 h_\cC(Z) = h_\cC(F) = \log \sup\{ |F(P_0,\dots,P_t)| \,;\, P_0,\dots,P_t\in \cC \}.
\]
In the notation of \cite{LR}, this corresponds to the height of $Z$ or $F$ relative to the adelic convex body of $\bQ[\uX]_D$ whose component at infinity is $\cC$ and whose component at each prime number $p$ is the unit ball of $\bC_p[\uX]_D$ with respect to the maximum norm.

Finally, given $t\in\{0,\dots,m\}$, we define a \emph{cycle} of dimension $t$ in $\bP^m_\bQ$ to be a formal linear combination
\[
 Z=m_1Z_1+\cdots+m_sZ_s
\]
with positive integer coefficients $m_1,\dots,m_s$ of subvarieties $Z_1,\dots,Z_s$ of $\bP^m_\bQ$ of dimension $t$.  The latter are called the \emph{components} of $Z$.  We extend the notions of degree and heights to such cycles $Z$ by writing
\[
 \deg(Z)=\sum_{i=1}^s m_i\deg(Z_i),
 \quad
 h(Z)=\sum_{i=1}^s m_ih(Z_i)
 \et
 h_\cC(Z)=\sum_{i=1}^s m_ih_\cC(Z_i),
\]
where $\cC$ stands for an arbitrary convex body of $\bC[\uX]_D$ for some $D\in\bN^*$.

In proving the three results stated below, we freely use the estimate $\dim_\bC\bC[\uX]_D = \binom{D+m}{m} \le (m+1)^D$ valid for any integer $D\ge 0$.

\begin{lemma}
 \label{prelim:lemma:heightB}
Let $D$ be a positive integer and let $\cB=\{P\in\bC[\uX]_D \,;\, \|P\| \le 1\}$.  Then, for any integer $t\in\{0,1,\dots,m\}$ and any cycle $Z$ of $\bP_\bQ^m$ of dimension $t$, we have
\[
 |h_\cB(Z)-D^{t+1}h(Z)| \le (t+4)(t+1)\log(m+1)D^{t+1}\deg(Z).
\]
In particular, we have $h_\cB(\bP^m) \le (m+4)(m+1)\log(m+1)D^{m+1}$.
\end{lemma}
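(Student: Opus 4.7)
The plan is to reduce to the case of a single subvariety $Z$ of dimension $t$ and degree $\delta := \deg(Z)$ by additivity of $h$, $h_\cB$ and $\deg$ on cycles, and then to chain two separate comparisons. Denote by $F$ the Chow form of $Z$ in degree $D$, which is multihomogeneous of multidegree $(d,\ldots,d)$ with $d = D^t\delta$ in $t+1$ groups of $N := \binom{D+m}{m}$ variables, and by $F_1$ its Chow form in degree $1$, so that $h(Z) = \log\|F_1\|_\infty$. I then compare $h_\cB(Z)$ to $\log\|F\|_\infty$ on one side and $\log\|F\|_\infty$ to $D^{t+1} h(Z)$ on the other.

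The first comparison rests on the standard sup-norm/coefficient-norm inequality: for any multihomogeneous polynomial $G$ of multidegree $(d,\ldots,d)$ in $t+1$ groups of $N$ variables,
\[
 \|G\|_\infty \;\le\; \sup_{\|v_i\|_\infty\le 1}|G(v_0,\ldots,v_t)| \;\le\; \|G\|_\infty \binom{N+d-1}{d}^{t+1}.
\]
The lower bound comes from extracting individual coefficients by integration on the polytorus (Cauchy), the upper bound from the triangle inequality applied to the multi-monomial expansion together with a count of multi-monomials. Applied to $F$, and using $N\le (m+1)^D$, this controls $|h_\cB(Z)-\log\|F\|_\infty|$ by a quantity of order $(t+1)D^{t+1}\delta\log(m+1)$.

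The second comparison exploits the polynomial identity
\[
 F(L_0^D,\ldots,L_t^D) \;=\; c\cdot F_1(L_0,\ldots,L_t)^{D^{t+1}},
\]
valid for some nonzero integer $c = c_{Z,D}$: both sides vanish precisely on the $(t+1)$-tuples of linear forms sharing a common zero on $Z$, $F_1$ is irreducible over $\bZ$ with content $1$, and the multidegrees match. Using $|c|\ge 1$, the multinomial bound $\|L^D\|_\infty \le (m+1)^D$ for $\|L\|_\infty\le 1$, and the multi-homogeneity of $F$, evaluation at such $L_i$ immediately gives $h_\cB(Z) \ge D^{t+1} h(Z) - (t+1)D^{t+1}\delta \log(m+1)$, without even passing through $\|F\|_\infty$. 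For the reverse inequality one must also bound $|c|$ from above, which I would do by identifying $|c|$ with the content of the integer polynomial $F(L_0^D,\ldots,L_t^D)$ in the coefficients of the $L_i$'s, then estimating this content by the sup-norm of its coefficients, which is itself controlled by $\|F\|_\infty$ times a multinomial factor arising from the expansion of the $L_i^D$.

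Summing the two comparisons yields the claimed estimate with a constant of the expected order $(t+4)(t+1)\log(m+1)$. The ``in particular'' statement then follows by specialization to $Z=\bP^m$, for which $h(\bP^m)=0$, $\deg(\bP^m)=1$, and $\dim(\bP^m)=m$. The main obstacle I anticipate is the upper bound on $|c|$ in the second comparison: keeping the combinatorial factors tight enough so that the resulting error stays within a fixed multiple of $(t+1)D^{t+1}\delta\log(m+1)$, rather than degrading by an extra $N$- or $d$-dependent factor, requires careful bookkeeping of the multinomial coefficients arising from the identity.
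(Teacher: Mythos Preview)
Your reduction to a single subvariety by additivity matches the paper, and your first comparison $|h_\cB(Z)-\log\|F\|_\infty|\le (t+1)D^{t+1}\delta\log(m+1)$ is correct and essentially what \cite[Lemma~3.3(i)]{LR} records. The gap is in the second comparison.

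The identity $F(L_0^D,\dots,L_t^D)=c\,F_1(L_0,\dots,L_t)^{D^{t+1}}$ only constrains $F$ on the thin locus of $(t{+}1)$-tuples of $D$-th powers of linear forms. From it you correctly extract the \emph{lower} bound $h_\cB(Z)\ge D^{t+1}h(Z)-(t+1)D^{t+1}\delta\log(m+1)$: choose $L_i$ with $\|L_i\|\le 1$ realizing $\|F_1\|$, use $L_i^D\in(m+1)^D\cB$, and invoke $|c|\ge 1$. But for the upper bound you need to control $\sup_{P_i\in\cB}|F(P_0,\dots,P_t)|$ over \emph{all} $P_i\in\cB$, and the identity says nothing about $F$ away from $D$-th powers. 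Bounding $|c|$ from above does not help here: combining an upper bound on $|c|$ with the identity yields an upper bound on $\|F(L^D)\|_\infty$ in terms of $\|F_1\|^{D^{t+1}}$, hence again only $D^{t+1}h(Z)\le\log\|F\|_\infty+\text{error}$ --- the same direction you already have. There is no mechanism in your scheme to bound $\|F\|_\infty$ (or $h_\cB(Z)$) from \emph{above} by $D^{t+1}h(Z)$, because restriction to a proper subvariety of the domain cannot control the supremum over the whole domain.

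The paper sidesteps this by passing through the intermediate height $h_\cD(Z)$ for the unit ball $\cD$ of $\bC[\uX]_1$: first $h(Z)\le h_\cD(Z)\le h(Z)+(t+1)\log(m+1)\deg(Z)$ via \cite[Lemma~3.3(i)]{LR}, and then $|h_\cB(Z)-D^{t+1}h_\cD(Z)|\le(t+3)(t+1)\log(m+1)D^{t+1}\deg(Z)$ via \cite[Prop.~5.3]{LR}. That proposition is where the substantive work happens: relating the degree-$D$ Chow form to the degree-$1$ Chow form in both directions requires more structure than a single substitution identity, and it is precisely the missing upper bound that this black box supplies.
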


\begin{proof} By linearity, it suffices to prove the first assertion when $Z$ is a subvariety of $\bP^m_\bQ$.  Then, by definition, we have $h(Z)= \log\|F\|$ where $F\colon\bC[\uX]_1^{t+1}\to \bC$ denotes a Chow form of $Z$ in degree $1$.  Similarly, for the convex body $\cD = \{L\in\bC[\uX]_1 \,;\, \|L\| \le 1\}$ of $\bC[\uX]_1$, we have $h_\cD(Z) = h_\cD(F)$ and so, a crude estimate based on \cite[Lemma 3.3 (i)]{LR} gives
\[
 h(Z) \le h_\cD(Z) \le h(Z) + (t+1)\log(m+1)\deg(Z)
\]
because $F$, being homogeneous of degree $\deg(Z)$ on each of the $t+1$ factors of the product $\bC[\uX]_1^{t+1}$, its underlying polynomial has at most $(m+1)^{(t+1)\deg(Z)}$ non-zero coefficients. By Proposition 5.3 of \cite{LR} and the remark stated after it, we also have
\[
 | h_\cB(Z) - D^{t+1}h_\cD(Z) | \le (t+3)(t+1)\log(m+1)D^{t+1}\deg(Z).
\]
Combining the two estimates gives the first assertion.  The second assertion follows from it using $h(\bP^m)=0$.
\end{proof}

\begin{proposition}
 \label{prelim:prop:inter}
Let $D$ be a positive integer, let $\cC$ be a convex body of $\bC[\uX]_D$, and let $Z$ be a subvariety of $\bP_\bQ^m$ of positive dimension $t$.  Suppose that there exists a polynomial $P \in \bZ[\uX]_D \cap \cC$ which does not belong to the ideal of $Z$.  Then there exists a cycle $Z'$ of $\bP^m_\bQ$ of dimension $t-1$ which satisfies
\begin{enumerate}
 \item[(i)] $\deg(Z') = D\deg(Z)$,
 \item[(ii)] $h(Z') \le Dh(Z) + \deg(Z)\log\|P\| + 2(t+5)(t+1)\log(m+1)D\deg(Z)$,
 \item[(iii)] $h_\cC(Z') \le h_\cC(Z) + 2t\log(m+1) D^{t+1}\deg(Z)$.
\end{enumerate}
\end{proposition}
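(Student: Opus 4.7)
I would define $Z'$ to be the intersection cycle $Z\cdot\cZ(P)$: since $P$ lies outside the homogeneous prime ideal of $Z$, the hypersurface $\cZ(P)\subset\bP^m_\bQ$ cuts $Z$ properly, so $Z'$ has pure dimension $t-1$ and, by Bezout's theorem in $\bP^m_\bQ$, degree $D\deg(Z)$. This gives assertion (i).

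The pivot for (ii) and (iii) is the classical eliminant identity of Chow form theory (essentially due to Philippon, see \cite{Ph}, in the formalism of \cite{LR}): if $F\colon\bC[\uX]_D^{t+1}\to\bC$ is a Chow form of $Z$ in degree $D$, then
\[
 G(P_0,\dots,P_{t-1}) := F(P,P_0,\dots,P_{t-1})
\]
coincides, up to a nonzero integer factor $c$ with $|c|\ge 1$, with a Chow form of $Z'$ in degree $D$. Indeed $G$ vanishes on those $(P_0,\dots,P_{t-1})$ which share a common zero with $P$ on $Z$, i.e., a common zero on the support of $Z'$; and the multi-degrees match since $D^t\deg(Z) = D^{t-1}\deg(Z')$.

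For (iii), combining this identity with the hypothesis $P\in\cC$ yields
\[
 h_\cC(Z') \le h_\cC(G) = \log\sup_{P_0,\dots,P_{t-1}\in\cC} |F(P,P_0,\dots,P_{t-1})| \le h_\cC(Z),
\]
using $|c|\ge 1$ in the first step and enlarging the supremum (letting the first argument range over $\cC$) in the second. The stated loss $2t\log(m+1)D^{t+1}\deg(Z)$ is thus ample slack. For (ii), I would apply Lemma \ref{prelim:lemma:heightB} twice. Scaling $P=\|P\|\cdot(P/\|P\|)$ with $P/\|P\|\in\cB$ and exploiting the homogeneity of $F$ of degree $D^t\deg(Z)$ in its first argument gives
\[
 h_\cB(Z') \le D^t\deg(Z)\log\|P\| + h_\cB(Z).
\]
Lemma \ref{prelim:lemma:heightB} applied to $Z$ bounds $h_\cB(Z)\le D^{t+1}h(Z) + (t+4)(t+1)\log(m+1)D^{t+1}\deg(Z)$, and applied to $Z'$ (whose degree is $D\deg(Z)$) yields $h(Z')\le h_\cB(Z')/D^t + (t+3)t\log(m+1)D\deg(Z)$. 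Combining the three inequalities and using the elementary estimate $(t+4)(t+1)+(t+3)t\le 2(t+5)(t+1)$ delivers (ii).

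The main obstacle is the rigorous verification of the eliminant identity with an integer proportionality factor of absolute value at least one; this is precisely where the elimination-theoretic formalism of \cite{LR} is indispensable. Everything else reduces to routine bookkeeping with the multi-homogeneity of Chow forms.
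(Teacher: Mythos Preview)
Your construction of $Z'$ as the intersection cycle $Z\cdot\div(P)$ is exactly what the paper does, and your use of the eliminant identity (that the specialized Chow form $G(P_0,\dots,P_{t-1})=F(P,P_0,\dots,P_{t-1})$ is a Chow form of $Z'$) is correct and is precisely the content of \cite[Lemma~4.2 and Prop.~7.1]{LR}. The paper does not invoke that identity explicitly; instead it quotes \cite[Prop.~4.9]{LR}, which already packages the height comparison for intersection products. So the overall route is the same.

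There is, however, a genuine gap in your passage from $h_\cC(G)$ to $h_\cC(Z')$. Recall that for a cycle $Z'=\sum_i m_iZ_i$ the paper \emph{defines} $h_\cC(Z')=\sum_i m_i\,h_\cC(G_i)$, where $G_i$ is a Chow form of $Z_i$; it is \emph{not} defined as $h_\cC$ of the product form $\prod_i G_i^{m_i}$. For any point $x\in\cC^t$ one has $|G(x)|=\prod_i|G_i(x)|^{m_i}\le\prod_i(\sup_\cC|G_i|)^{m_i}$, hence the trivial inequality goes the \emph{other} way:
\[
 h_\cC(G)\ \le\ \sum_i m_i\,h_\cC(G_i)\ =\ h_\cC(Z').
\]
Your claimed bound $h_\cC(Z')\le h_\cC(G)$ therefore does not follow from $|c|\ge 1$; what you need is the reverse inequality, and that requires a Gelfond-type comparison of the supremum of a product with the product of suprema. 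This is furnished by \cite[Prop.~3.7]{LR} and costs exactly an error of order $\log(m+1)D^{t+1}\deg(Z)$. So the term $2t\log(m+1)D^{t+1}\deg(Z)$ in (iii) is not ``ample slack'' but is genuinely needed, and the same missing error term enters your inequality $h_\cB(Z')\le D^t\deg(Z)\log\|P\|+h_\cB(Z)$ in the proof of (ii). The paper avoids this bookkeeping by citing \cite[Prop.~4.9]{LR} directly, which gives
\[
 h_\cC(Z')\le h_\cC(Z)+2t\log(m+1)D^{t+1}\deg(Z)
\]
and the analogous estimate for $\cB$ in one stroke; the rest of your computation with Lemma~\ref{prelim:lemma:heightB} then goes through unchanged.
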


\begin{proof} Define $Z'$ to be the intersection product $Z'=Z\cdot\div(P)$ as in \cite[Lemma 4.2]{LR}.  Then, (i) follows from \cite[Lemma 4.2]{LR} while (iii) derives from \cite[Prop.~4.9]{LR}. To prove (ii), we note that we have $P \in \|P\|\,\cB$ for the convex body $\cB$ of Lemma \ref{prelim:lemma:heightB} and so, by \cite[Prop.~4.9]{LR}, we get
\[
 h_\cB(Z') \le h_{\cB}(Z) + D^t\deg(Z)\log\|P\| + 2t\log(m+1)D^{t+1}\deg(Z).
\]
Then (ii) follows by combining this upper bound with the estimates
\[
 \max\{|h_\cB(Z)-D^{t+1}h(Z)|, |h_\cB(Z')-D^{t}h(Z')|\} \le (t+4)(t+1)\log(m+1)D^{t+1}\deg(Z).
\]
coming from Lemma \ref{prelim:lemma:heightB}.
\end{proof}

\begin{proposition}
 \label{prelim:prop:dim0}
Let $D$ be a positive integer, let $\cC$ be a convex body of $\bC[\uX]_D$, let $Z$ be a subvariety of $\bP_\bQ^m$ of dimension $0$, and let $\uZ$ be a set of representatives of the points of $Z$ by elements of $\bC^{m+1}$ of norm $1$.  Then, we have
\begin{equation}
 \label{prelim:prop:inter:eq1}
 \Big| h_\cC(Z) - Dh(Z) - \sum_{\ualpha\in\uZ} \log\sup\{|P(\ualpha)|\,;\, P\in\cC\} \Big|
 \le 9\log(m+1) D\deg(Z).
\end{equation}
Moreover, if there exists a polynomial $P\in\bZ[\uX]_D \cap \cC$ which does not belong to the ideal of $Z$, then we have $h_\cC(Z) \ge 0$ and
\begin{equation}
 \label{prelim:prop:inter:eq1bis}
 0 \le 7\log(m+1) D \deg(Z) + D h(Z) + \sum_{\ualpha\in\uZ} \log |P(\ualpha)|.
\end{equation}
\end{proposition}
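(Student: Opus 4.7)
My plan exploits the factorization of the Chow form $F$ of $Z$ in degree $D$ forced by $\dim Z = 0$. Setting $n := \deg(Z)$, the map $F \colon \bC[\uX]_D \to \bC$ is homogeneous of degree $n$ in its single polynomial argument, and its zero set is the union of the hyperplanes $\{P : P(\ualpha) = 0\}$ as $\ualpha$ ranges over $\uZ$. Consequently there is a nonzero constant $c \in \bC^*$ with
\[
 F(P) = c \prod_{\ualpha \in \uZ} P(\ualpha), \qquad P \in \bC[\uX]_D,
\]
and $|c|$ does not depend on the particular choice of norm-$1$ representatives (any other differs from $\ualpha$ by scalars of modulus $1$). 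The proof now reduces to estimating $\log|c|$ in terms of $Dh(Z)$ and $h_\cC(Z) = \log\sup_{P \in \cC}|F(P)|$ in terms of $\log|c|$ and the pointwise quantities $A_\ualpha := \sup_{P \in \cC}|P(\ualpha)|$.

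Control of $\log|c|$ comes from Lemma~\ref{prelim:lemma:heightB}. With $t = 0$ and $\cB := \{P : \|P\| \le 1\}$, that lemma gives $|h_\cB(Z) - Dh(Z)| \le 4\log(m+1)Dn$. From the factorization, $h_\cB(Z) = \log|c| + \log\sup_{P \in \cB}\prod_\ualpha|P(\ualpha)|$, and since $\|\ualpha\| = 1$ the inner sup lies in $[1,(m+1)^{Dn}]$: a suitable monomial realizes value $1$ at any prescribed $\ualpha$, while the upper bound uses $|P(\ualpha)| \le \|P\|\binom{D+m}{m}$. This pins $\log|c|$ within $5\log(m+1)Dn$ of $Dh(Z)$. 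The comparison of $h_\cC(Z) = \log|c| + \log\sup_{P \in \cC}\prod_\ualpha|P(\ualpha)|$ with $\log|c| + \sum_\ualpha \log A_\ualpha$ proceeds similarly: the upper bound is the trivial $\sup\prod \le \prod\sup$, while for the lower bound one picks $P_\ualpha \in \cC$ realizing $A_\ualpha$ and considers combinations $P = n^{-1}\sum_\ualpha \epsilon_\ualpha P_\ualpha$ with unit complex phases $\epsilon_\ualpha$; iterating the defining convexity rule of $\cC$ keeps each such $P$ in $\cC$, and an averaging over the phases produces one with $|P(\ualpha)|$ at least a polynomial-in-$n$ fraction of $A_\ualpha$ simultaneously at every $\ualpha$. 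Combining this with the previous step yields the first assertion \eqref{prelim:prop:inter:eq1}.

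For the second assertion, if $P \in \bZ[\uX]_D \cap \cC$ is not in the prime ideal of $Z$, then $P(\ualpha) \ne 0$ for every $\ualpha \in \uZ$ since the geometric points of $Z$ form a single Galois orbit over $\bQ$. As the Chow form has integer coefficients as a polynomial in the coefficients of its argument, $F(P) = c\prod_\ualpha P(\ualpha)$ is then a nonzero integer and $|F(P)| \ge 1$. Hence $h_\cC(Z) \ge \log|F(P)| \ge 0$ at once, and the inequality $0 \le \log|c| + \sum_\ualpha \log|P(\ualpha)|$ combined with the first-step upper bound on $\log|c|$ gives \eqref{prelim:prop:inter:eq1bis}.

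The main technical obstacle lies in the lower bound of the pointwise comparison: producing one polynomial in $\cC$ whose values at all $n$ points of $Z(\bC)$ are simultaneously near-extremal. The averaging over unit phases, which leans crucially on the fact that $\cC$ is closed under combinations $\sum_i \lambda_i P_i$ whenever $\sum_i |\lambda_i| \le 1$, solves this at the cost of an additive error of order $n\log n$; this error is absorbed into the $9\log(m+1)Dn$ budget thanks to the B\'ezout-type bounds on $n = \deg Z$ that prevail in any realistic application.
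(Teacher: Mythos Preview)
Your overall architecture matches the paper's: factor the Chow form as $F(P)=c\prod_{\ualpha}P(\ualpha)$, pin down $\log|c|$ using the unit ball $\cB$ and Lemma~\ref{prelim:lemma:heightB}, and then compare $h_\cC(Z)=\log\sup_{P\in\cC}|F(P)|$ with $\log|c|+\sum_\ualpha\log A_\ualpha$. The difference is entirely in how that last comparison is carried out, and there your argument has a real gap.

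In the paper, both comparisons (for $\cC$ and for $\cB$) are obtained by a single appeal to \cite[Prop.~3.7(i)]{LR}, which controls $\bigl|\,h_\cC(L_1\cdots L_n)-\sum_i h_\cC(L_i)\,\bigr|$ for a product of $n$ linear forms on a $d$-dimensional space by $2n\log d$; here $d=\dim_\bC\bC[\uX]_D\le (m+1)^D$, and this is exactly what produces the $D\log(m+1)\deg(Z)$ shape of the error. Your substitute for this is the phase-averaging argument, but (i) it is not actually carried out---you assert that averaging over the $\epsilon_\ualpha$ yields a single $P\in\cC$ with $|P(\ualpha)|$ simultaneously close to $A_\ualpha$ for \emph{all} $\ualpha$, and a union-bound or second-moment justification of that simultaneity is missing; and (ii) even granting it, the error you yourself quote is of order $n\log n$, not $nD\log(m+1)$. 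Since the proposition places no a~priori bound on $n=\deg(Z)$ in terms of $D$, the inequality $n\log n\le 9\log(m+1)Dn$ can fail, and your final sentence (``absorbed \ldots\ thanks to the B\'ezout-type bounds \ldots\ in any realistic application'') is an admission that the proposition \emph{as stated} has not been proved.

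The same defect already appears in your treatment of $\cB$: the claim that $\sup_{P\in\cB}\prod_\ualpha|P(\ualpha)|\ge 1$ is not established by ``a suitable monomial realizes value $1$ at any prescribed $\ualpha$'', since that produces a \emph{different} monomial for each $\ualpha$, not one $P$ that works for all of them. In the paper this step is again covered by \cite[Prop.~3.7(i)]{LR} (applied with $\cB$ in place of $\cC$), yielding \eqref{prelim:prop:inter:eq3} and then \eqref{prelim:prop:inter:eq4} with the constant $7$ that feeds directly into \eqref{prelim:prop:inter:eq1bis}. So the missing ingredient throughout is precisely that comparison lemma for products of linear forms; once you invoke it, the rest of your outline coincides with the paper's proof.
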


\begin{proof}
Let $F$ be a Chow form of $Z$ in degree $D$.  There is a constant $a\in\Cmult$ depending only on $F$ and $Z$ such that, for any $P\in\bC[\uX]_D$, we have
\begin{equation}
 \label{prelim:prop:inter:fact}
 F(P) = a \prod_{\ualpha\in\uZ} P(\ualpha)\,.
\end{equation}
As this is a factorization of $F$ into a product of $\deg(Z)$ linear forms on $\bC[\uX]_D$ and as $\dim_\bC \bC[\uX]_D \le (m+1)^D$, Proposition 3.7 (i) of \cite{LR} gives
\begin{equation}
 \label{prelim:prop:inter:eq2}
 \Big| h_\cC(Z) - \log|a| -\sum_{\ualpha\in\uZ} \log \sup\{|P(\ualpha)|\,;\, P\in\cC \} \Big|
 \le 2\log(m+1) D \deg(Z).
\end{equation}
Applying this estimate to the convex body $\cB$ of Lemma \ref{prelim:lemma:heightB} instead of $\cC$, we get
\begin{equation}
 \label{prelim:prop:inter:eq3}
 \big| h_\cB(Z) - \log|a| \big|
 \le 3\log(m+1) D \deg(Z)
\end{equation}
because for each of the $\deg(Z)$ points $\ualpha$ of $\uZ$, we have
\[
 0 \le \log \sup\{|P(\ualpha)|\,;\, P\in\cB \} \le \log(m+1) D.
\]
The estimate \eqref{prelim:prop:inter:eq3} combined with Lemma \ref{prelim:lemma:heightB} gives
\begin{equation}
 \label{prelim:prop:inter:eq4}
 \big| \log|a| - D h(Z) \big| \le 7\log(m+1) D \deg(Z),
\end{equation}
which in turn, after substitution into \eqref{prelim:prop:inter:eq2} leads to \eqref{prelim:prop:inter:eq1}.  Finally, if a polynomial $P\in\bZ[\uX]_D \cap \cC$ does not belong to the ideal of $Z$, then we have $F(P)\in\bZ\setminus\{0\}$ and so $h_\cC(Z)\ge \log |F(P)|\ge 0$.  The estimate \eqref{prelim:prop:inter:eq1bis} then follows from the equality \eqref{prelim:prop:inter:fact} together with \eqref{prelim:prop:inter:eq4}.
\end{proof}

%
%
\section{Basic estimates}
\label{sec:basic}

Let $\cG$ denote the commutative group $(\Ga\times\Gm)(\bC)=\bC\times\Cmult$ with its group law written additively.  We denote by $\bC[\uX]$ the ring $\bC[X_0,X_1,X_2]$ and by $\cD$ its homogeneous derivation
\[
 \cD = X_0\frac{\partial}{\partial X_1} + X_2\frac{\partial}{\partial X_2}.
\]
For each $\gamma=(\xi,\eta)\in \cG$ and each $P\in\bC[\uX]$, we define $P(1,\gamma)=P(1,\xi,\eta)$.  We also denote by $\tau_\gamma$ the $\bC$-algebra automorphism of  $\bC[\uX]$ given by
\[
 \tau_\gamma(P(X_0,X_1,X_2))=P(X_0, \xi X_0 + X_1, \eta X_2)
\]
so that, for any $\gamma,\gamma'\in \cG$ and any $P\in\bC[\uX]$, we have
\[
 (\tau_{\gamma}P)(1,\gamma')=P(1,\gamma+\gamma').
\]
We also note that $\tau_{\gamma}\circ\cD = \cD\circ\tau_{\gamma}$ and $\tau_{\gamma}\circ\tau_{\gamma'} = \tau_{\gamma+\gamma'}$ for any $\gamma,\gamma'\in \cG$.  Finally, for each $T\in\bN^*$, we denote by $I^{(\gamma,T)}$ the ideal of $\bC[\uX]$ generated by all homogeneous polynomials $P$ satisfying $\cD^iP(1,\gamma)=0$ for $i=0,\dots,T-1$.  For $L\in\bN$, the symbol $I_L^{(\gamma,T)}$ represents its homogeneous part of degree $L$.

Qualitatively, the results of this section imply that, for fixed $T\in\bN^*$, the largest integer $L$ such that $I_L^{(\gamma,T)}=\{0\}$ is also the largest $L$ with $\binom{L+2}{2} \le T$, and that, for this value of $L$, the ideal $I^{(\gamma,T)}$ is generated by $I_{L+1}^{(\gamma,T)}$ and $I_{L+2}^{(\gamma,T)}$.


We first establish two lemmas where, for a polynomial $Q\in\bC[\uX]$, the notation $\cL(Q)$ stands for the \emph{length} of $Q$, namely the sum of the absolute values of its coefficients.

\begin{lemma}
\label{basic:lemma:derivatives}
Let $D\in\bN$ and $Q\in\bC[\uX]_D$.  For any $\gamma = (\xi,\eta) \in \cG$ and $i\in\bN$, we have
\[
 \cL(\tau_{\gamma}Q) \le c_1(\gamma)^D\|Q\|,
 \quad
 \cL(\cD^iQ)\le D^i\cL(Q)
 \et
 |\cD^iQ(1,\gamma)| \le c_2(\gamma)^D D^i \cL(Q),
\]
where $c_1(\gamma)=2+|\xi|+|\eta|$ and $c_2(\gamma)=\max\{1,|\xi|,|\eta|\}$.
\end{lemma}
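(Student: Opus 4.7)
The three estimates are routine length bounds that I would verify monomial by monomial; there is no serious conceptual obstacle, and the only thing to watch is producing the constants $c_1(\gamma)$ and $c_2(\gamma)$ exactly as stated rather than something slightly looser. My plan is to write $Q=\sum_{|\nu|=D}a_\nu\uX^\nu$ and treat each of the three inequalities in turn.

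For the first inequality, I would expand $\tau_\gamma(\uX^\nu)=X_0^{\nu_0}(\xi X_0+X_1)^{\nu_1}(\eta X_2)^{\nu_2}$ via the binomial theorem and observe that its length equals exactly $(1+|\xi|)^{\nu_1}|\eta|^{\nu_2}$. Applying the triangle inequality to $\tau_\gamma Q = \sum_\nu a_\nu \tau_\gamma(\uX^\nu)$, bounding $|a_\nu|$ by $\|Q\|$, and then inserting the multinomial coefficients (each $\ge 1$) yields
\[
 \sum_{|\nu|=D}(1+|\xi|)^{\nu_1}|\eta|^{\nu_2}\le\sum_{|\nu|=D}\binom{D}{\nu_0,\nu_1,\nu_2}1^{\nu_0}(1+|\xi|)^{\nu_1}|\eta|^{\nu_2}=\bigl(2+|\xi|+|\eta|\bigr)^D=c_1(\gamma)^D,
\]
which gives the required bound.

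For the second inequality, a one-line computation gives $\cD(\uX^\nu)=\nu_1 X_0^{\nu_0+1}X_1^{\nu_1-1}X_2^{\nu_2}+\nu_2\uX^\nu$, a sum of at most two distinct monomials whose total length is $\nu_1+\nu_2\le D$. The triangle inequality then yields $\cL(\cD Q)\le D\,\cL(Q)$, and a straightforward induction on $i$ produces $\cL(\cD^iQ)\le D^i\cL(Q)$.

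For the third inequality, I would use that $\cD$ is a homogeneous derivation of degree $0$, so $\cD^iQ$ still lies in $\bC[\uX]_D$. For any $R\in\bC[\uX]_D$, the elementary bound $|R(1,\xi,\eta)|\le\cL(R)\max\{1,|\xi|,|\eta|\}^D=\cL(R)c_2(\gamma)^D$ is immediate by expanding $R$ and applying the triangle inequality monomial by monomial. Combining this with the second inequality closes the case. Overall I anticipate no real obstacle here; the lemma is a preparatory tool of the standard kind, and the only genuine bookkeeping point is avoiding loose dimensional factors like $\binom{D+2}{2}$, which the multinomial identity above circumvents cleanly.
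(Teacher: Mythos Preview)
Your argument is correct and matches the paper's approach exactly: the paper's own proof is a single sentence stating that the first estimate follows from the definition, the second from a quick induction on $i$, and the third is a direct consequence of the second. You have simply supplied the routine details behind each of these clauses, including the multinomial identity that pins down the constant $c_1(\gamma)$ precisely.
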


\begin{proof}
The first estimate follows from the definition, the second comes from a quick induction on $i$, and the third is a direct consequence of the second.
\end{proof}

\begin{lemma}
\label{basic:lemma:theta}
Let $r_1,\dots,r_s\in\bC$ and $e_0,e_1,\dots,e_s\in\bN^*$.  Put $R=\max\{1,|r_1|,\dots,|r_s|\}$ and $E=e_0+e_1+\cdots+e_s$.  Then, there is a unique polynomial $a(X)\in\bC[X]$ of degree $<e_0$ such that
\begin{equation}
\label{basic:lemma:theta:eq}
 a(X) \prod_{i=1}^s(1-r_iX)^{e_i} \equiv 1 \mod X^{e_0},
\end{equation}
and its length satisfies $\disp \cL(a) \le \binom{E-1}{e_0-1}R^{e_0-1}$.
\end{lemma}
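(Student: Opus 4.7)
The plan is to compute explicitly the inverse of $f(X):=\prod_{i=1}^s(1-r_iX)^{e_i}$ modulo $X^{e_0}$, and then estimate the absolute values of its coefficients term by term, using standard identities for binomial coefficients.

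For existence and uniqueness, I would observe that $f(0)=1$, so $f$ is a unit in the power series ring $\bC[[X]]$, and hence admits a unique inverse $g(X)=\sum_{n\ge 0}c_nX^n\in\bC[[X]]$. The polynomial $a(X)$ obtained by truncating $g(X)$ modulo $X^{e_0}$ is then the unique polynomial of degree $<e_0$ satisfying \eqref{basic:lemma:theta:eq}, since any other candidate would differ from $a(X)$ by an element of $X^{e_0}\bC[[X]]$ of degree $<e_0$, hence by $0$.

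For the length bound, I would write $g(X)=\prod_{i=1}^s(1-r_iX)^{-e_i}$ and expand each factor as a binomial series
\[
 (1-r_iX)^{-e_i} = \sum_{n_i\ge 0}\binom{n_i+e_i-1}{e_i-1}r_i^{n_i}X^{n_i},
\]
so that the coefficient $c_n$ of $X^n$ in $g(X)$ is
\[
 c_n=\sum_{n_1+\cdots+n_s=n}\prod_{i=1}^s\binom{n_i+e_i-1}{e_i-1}r_i^{n_i}.
\]
Bounding $|r_i^{n_i}|\le R^{n_i}$ and applying the Vandermonde-type identity (obtained by comparing coefficients of $X^n$ in $\prod_i(1-X)^{-e_i}=(1-X)^{-(E-e_0)}$), I get
\[
 |c_n|\le R^n\binom{n+E-e_0-1}{E-e_0-1}.
\]
Summing from $n=0$ to $n=e_0-1$ and using $R\ge 1$ together with the hockey-stick identity $\sum_{n=0}^{e_0-1}\binom{n+E-e_0-1}{E-e_0-1}=\binom{E-1}{E-e_0}=\binom{E-1}{e_0-1}$ then yields
\[
 \cL(a)=\sum_{n=0}^{e_0-1}|c_n|\le R^{e_0-1}\binom{E-1}{e_0-1},
\]
as required.

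There is no real obstacle here: the argument is routine once one expands $g(X)$ using the negative binomial series. The only point worth care is verifying the Vandermonde and hockey-stick identities that collapse the multiple sum into a single binomial coefficient matching the stated bound $\binom{E-1}{e_0-1}$; the rest is bookkeeping with $R\ge 1$.
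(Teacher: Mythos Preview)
Your proof is correct and follows essentially the same approach as the paper: both invert $\prod_i(1-r_iX)^{e_i}$ in $\bC[[X]]$, truncate, and bound coefficients via the negative binomial expansion. The only cosmetic difference is that the paper encodes your Vandermonde and hockey-stick identities as a single generating-function step, multiplying by an extra factor $\sum_{j\ge 0}X^j$ and reading off the coefficient of $X^{e_0-1}$ in $\big(\sum_{j\ge 0}R^jX^j\big)^{E-e_0+1}$, which amounts to the same computation.
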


\begin{proof}
Since $1-r_1X,\dots,1-r_sX$ are units in the ring of formal power series $\bC[[X]]$, the congruence \eqref{basic:lemma:theta:eq} is equivalent to
\[
 a(X) \equiv \prod_{i=1}^s (1+r_iX+r_i^2X^2+\cdots)^{e_i} \mod X^{e_0}.
\]
This shows the existence and uniqueness of $a(X)$ and implies that its length is bounded above by the coefficient of $X^{e_0-1}$ in the series
\[
 \Big( \sum_{j=0}^\infty X^j \Big) \prod_{i=1}^s \Big( \sum_{j=0}^\infty |r_i|^j X^j \Big)^{e_i},
\]
itself dominated by $\Big( \sum_{j=0}^\infty R^j X^j \Big)^{E-e_0+1} = \sum_{j=0}^\infty \binom{j+E-e_0}{E-e_0} R^j X^j$.
\end{proof}

A weaker form of the next result, involving a larger constant, can be derived from Malher's formula (7), page 88 of \cite{Mah}.  For the convenience of the reader, we provide an independent, more specific proof based on the fact that, for each polynomial $P\in \bC[\uX]$, the sequence $(\cD^iP(1,0,1))_{i\in\bN}$ is a linear recurrence sequence.

\begin{proposition}
\label{basic:prop:interp}
Let $\gamma=(\xi,\eta)\in \cG$, let $L\in\bN$ and put $M=\binom{L+2}{2}$.  Then the map
\begin{equation}
\label{basic:prop:interp:eq1}
 \begin{aligned}
 \bC[\uX]_L &\fleche \quad \bC^M\\
  Q \ &\longmapsto (\cD^iQ(1,\gamma))_{0\le i< M}
 \end{aligned}
\end{equation}
is an isomorphism of $\bC$-vector spaces.  Moreover, for each $Q\in \bC[\uX]_L$, we have
\begin{equation}
\label{basic:prop:interp:eq1.5}
  \cL(Q) \le c_1(-\gamma)^L 8^M \max_{0\le i< M} |\cD^iQ(1,\gamma)|\,.
\end{equation}
\end{proposition}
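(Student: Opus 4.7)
The plan divides into two parts: first the isomorphism, then the quantitative bound, which I reduce to the case $\gamma=(0,1)$.

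Both $\bC[\uX]_L$ and $\bC^M$ have dimension $M=\binom{L+2}{2}$, so injectivity suffices. Put $q(X_1,X_2)=Q(1,X_1,X_2)$ and $f(z)=q(\xi+z,\eta e^z)$, so that $f^{(i)}(0)=\cD^i Q(1,\gamma)$. Grouping terms of $q$ by their degree in $X_2$ yields $f(z)=\sum_{k=0}^L g_k(z)e^{kz}$ where $g_k(z)=\eta^k\sum_{a=0}^{L-k}q_{a,k}(\xi+z)^a$ is a polynomial in $z$ of degree $\le L-k$. Consider the generating function $F(X)=\sum_{i\ge 0}f^{(i)}(0)X^i$. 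The identity $\sum_{i\ge j}\tfrac{i!}{(i-j)!}k^{i-j}X^i=j!\,X^j/(1-kX)^{j+1}$ shows that $F(X)=P(X)/\prod_{k=0}^L(1-kX)^{L-k+1}$ for some $P(X)\in\bC[X]$ with $\deg P\le M-1$. Vanishing of the first $M$ Taylor coefficients of $F$ forces $P=0$; reducing modulo each $(1-kX)^{L-k+1}$ in turn and using that $\{X^a(1-kX)^{L-k-a}\}_{a=0}^{L-k}$ is a basis of polynomials of degree $\le L-k$ then forces all $q_{a,k}=0$, hence $Q=0$.

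For the quantitative bound, reduce to the identity element by setting $R=\tau_\gamma Q$. Since $\tau_\gamma$ commutes with $\cD$, we have $\cD^i R(1,0,1)=\cD^i Q(1,\gamma)$ for all $i$; and from $Q=\tau_{-\gamma}R$, Lemma \ref{basic:lemma:derivatives} gives $\cL(Q)\le c_1(-\gamma)^L\|R\|\le c_1(-\gamma)^L\cL(R)$. It thus suffices to prove an estimate of the form $\cL(R)\le 8^M\max_{0\le i<M}|\cD^i R(1,0,1)|$. With $r(X_1,X_2)=R(1,X_1,X_2)=\sum r_{ab}X_1^a X_2^b$, the same rational-function setup (now with $\xi=0$, $\eta=1$) gives $P(X)\equiv F(X)\prod_{k=0}^L(1-kX)^{L-k+1}\pmod{X^M}$, and $\deg P\le M-1$ determines $P$ from the first $M$ Taylor coefficients of $F$; a truncated-product length estimate then bounds $\cL(P)$ in terms of $E:=\max_{0\le i<M}|\cD^i R(1,0,1)|$. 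Next, for each $b_0\in\{0,\dots,L\}$, the polynomial $A_{b_0}(X)=\sum_{a=0}^{L-b_0}r_{a,b_0}\,a!\,X^a(1-b_0X)^{L-b_0-a}$ satisfies $A_{b_0}\equiv P\cdot\phi_{b_0}^{-1}\pmod{(1-b_0X)^{L-b_0+1}}$ with $\phi_{b_0}(X)=\prod_{b\ne b_0}(1-bX)^{L-b+1}$. The inverse $\phi_{b_0}^{-1}$ modulo $(1-b_0X)^{L-b_0+1}$ is estimated by Lemma \ref{basic:lemma:theta}, applied directly when $b_0=0$ and after the substitution $V=1-b_0X$ when $b_0\ne 0$. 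This bounds $\cL(A_{b_0})$, and a final inversion in the basis $\{X^a(1-b_0X)^{L-b_0-a}\}_{a=0}^{L-b_0}$ recovers the individual $|r_{a,b_0}|$.

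The main obstacle will be the careful bookkeeping of constants through these three operations---truncated multiplication by $\prod_{k=0}^L(1-kX)^{L-k+1}$, inversion of $\phi_{b_0}$ via Lemma \ref{basic:lemma:theta} (contributing binomial-coefficient factors of order $\binom{M-1}{L-b_0}L^{L-b_0}$), and the Bernstein-type basis inversion---so as to combine into the clean exponential bound $8^M$. Since $L\le\sqrt{2M}$, every polynomial-in-$L$ factor can be absorbed into a factor of the form $C^M$ for an absolute constant $C$; a careful accounting of the various products and binomial factors should yield $C\le 8$.
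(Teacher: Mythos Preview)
Your reduction to $\gamma=(0,1)$ via $\tau_\gamma$ and your linear-recurrence/generating-function framework are exactly the ideas the paper uses; the paper phrases them dually, acting by the shift operator $\tau$ on the sequence $u=(\cD^iQ(1,0,1))_{i\ge 0}$ rather than forming the $z$-transform $F(X)=\sum u_iX^i$, but structurally this is the same. Your injectivity argument is fine.

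The gap is in the quantitative part. Your route first forms the global numerator
\[
  P(X)\equiv F(X)\prod_{k=0}^{L}(1-kX)^{L-k+1}\pmod{X^M},
\]
bounds $\cL(P)$ by a truncated-product length estimate, and only then extracts each $A_{b_0}$. But that first length bound already costs a factor
\[
  \cL\Big(\prod_{k=0}^{L}(1-kX)^{L-k+1}\Big)\ \le\ \prod_{k=0}^{L}(1+k)^{L-k+1}
  \ =\ \prod_{j=1}^{L+1} j!\,,
\]
which is of order $(L+1)^{cM}$, not $C^M$ for an absolute $C$. The subsequent partial-fraction step \emph{does} carry normalizing denominators $\prod_{k\neq b_0}(b_0-k)^{L-k+1}$, but once you have replaced $P$ by an $\cL$-bound these denominators can no longer cancel anything: $\cL$-estimates compose multiplicatively. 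So the assertion ``a careful accounting \ldots\ should yield $C\le 8$'' does not follow from the scheme as written.

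The paper avoids this by never forming $P$. For each target coefficient $q_{r,s}$ it writes down directly the single polynomial
\[
  b(X)=\frac{1}{r!}(X-s)^{r}\,a(X-s)\prod_{k\neq s}\Big(\frac{X-k}{s-k}\Big)^{L-k+1}
\]
of degree $<M$, where $a$ is obtained from Lemma~\ref{basic:lemma:theta}, and checks that $(b(\tau)u)_0=q_{r,s}$. The normalizing denominators $(s-k)^{L-k+1}$ are built into $b$ from the start, and this is precisely what makes the bound
\[
  \cL(b)\ \le\ (L+1)!\,\binom{L}{s}\cdots\binom{s+1}{s}\,\cL(a)\ \le\ 4^{M}\cdot 2^{M}=8^{M}
\]
work. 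If you want to salvage your generating-function viewpoint with the stated constant, you must track the composite linear functional $u\mapsto q_{r,s}$ in one piece (equivalently, compute the coefficient of $X^{?}$ in $F(X)\cdot b(X^{-1})X^{M-1}$ or similar) rather than pass through the intermediate object $P$.
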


\begin{proof}
The second assertion is a quantitative version of the first because it implies that the linear map \eqref{basic:prop:interp:eq1} is injective and so is an isomorphism, its domain and codomain having the same dimension $M$.  Therefore, it suffices to prove the second assertion. To this end, we fix a polynomial $Q\in\bC[\uX]_L$.

We first consider the case where $\gamma=e=(0,1)$ is the neutral element of $\cG$.  We denote by $\cS$ the complex vector space $\bC^\bN$ of sequences of complex numbers indexed by $\bN$, by $\tau\colon\cS\to \cS$ the $\bC$-linear map which sends a sequence $(u_i)_{i\in\bN}$ to the shifted sequence $(u_{i+1})_{i\in\bN}$, and by $\varphi\colon\bC[\uX]_L\to\cS$ the linear map which sends a polynomial $P\in\bC[\uX]_L$ to the sequence $(\cD^i P(1,0,1))_{i\in\bN}$.

For any $j,k\in\bN$ with $j+k\le L$, the monomial $M_{j,k} = X_0^{L-j-k} X_1^j X_2^k$ is mapped by $\varphi$ to the sequence $u^{(j,k)}$ given by
\[
 u^{(j,k)}_i = i(i-1)\cdots(i-j+1) k^{i-j} \quad (i\in \bN)
\]
with the conventions that $i(i-1)\cdots(i-j+1)=0$ when $j>i$ and that $k^{i-j}=\delta_{i,j}$ when $k=0$.  A quick recurrence argument shows that, for any $r\in\bN$, we have
\begin{equation}
\label{basic:prop:interp:eq2}
 (\tau-k)^r u^{(j,k)}
   = \begin{cases}
       j(j-1)\cdots(j-r+1)u^{(j-r,k)} &\text{if $r\le j$,}\\
       0 &\text{if $r>j$,}
     \end{cases}
\end{equation}
and so the initial term of the above sequence \eqref{basic:prop:interp:eq2} is
\begin{equation}
\label{basic:prop:interp:eq3}
 ((\tau-k)^r u^{(j,k)})_0 = r!\,\delta_{j,r}.
\end{equation}

Now, fix a choice of $r,s\in\bN$ with $r+s\le L$.  We use \eqref{basic:prop:interp:eq2} and \eqref{basic:prop:interp:eq3} to construct a linear functional on $\cS$ which maps $u^{(r,s)}$ to $1$ and all other sequences $u^{(j,k)}$ with $j+k\le L$ to $0$.  To this end, we first note that, by Lemma \ref{basic:lemma:theta}, there exists a unique polynomial $a(X)\in\bC[X]$ of degree $\le L-r-s$ such that
\[
 a(X)\prod_{{\scriptstyle k=0,\dots,L}\atop{\scriptstyle k\neq s}}
   \left( 1- \frac{X}{k-s} \right)^{L-k+1}
 \equiv 1 \mod X^{L-r-s+1}
\]
and its length satisfies
\begin{equation}
 \label{basic:prop:interp:eq4}
  \cL(a) \le \binom{M-r-1}{L-r-s} \le 2^M.
\end{equation}
Then the polynomial
\[
 b(X)
  = \frac{1}{r!}(X-s)^r a(X-s)
    \prod_{{\scriptstyle k=0,\dots,L}\atop{\scriptstyle k\neq s}} \left( \frac{X-k}{s-k} \right)^{L-k+1}
\]
is divisible by $(X-k)^{L-k+1}$ for each $k=0,\dots,L$ with $k\neq s$, and satisfies
\[
 b(X)\equiv \frac{1}{r!}(X-s)^r \mod (X-s)^{L-s+1}.
\]
Moreover it has degree $<M$, and length
\begin{equation}
 \label{basic:prop:interp:eq5}
 \begin{aligned}
  \cL(b)
  &\le \frac{(1+s)^r}{r!}(1+s)^{L-r-s} \cL(a)
    \prod_{k\neq s} \left( \frac{1+k}{|s-k|} \right)^{L-k+1}\\
  &\le \frac{(L+1)!\, L! \cdots 2!\, 1!\,}
            {(s!)^{L-s}\,s!\,(s-1)!\cdots 1!\, (L-s)!\, (L-s-1)! \cdots 1!}\, \cL(a)\\
  &= (L+1)!\,\binom{L}{s} \cdots \binom{s+1}{s}\, \cL(a)\\
  &\le 4^M \cL(a),
 \end{aligned}
\end{equation}
using $(L+1)!\le (L+1)^{L+1} \le 2^M$ and $\binom{k}{s} \le 2^k$ for $k=s+1,\dots,L$.  Since the formula \eqref{basic:prop:interp:eq2} implies that each $u^{(j,k)}$ with $j+k\le L$ is annihilated by $(\tau-k)^{L-k+1}$, we deduce that
\[
 b(\tau)u^{(j,k)} =
  \begin{cases}
    0 &\text{if $k\neq s$,}\\
    (1/r!)(\tau-s)^ru^{(j,s)} &\text{if $k=s$,}
  \end{cases}
\]
and so, thanks to \eqref{basic:prop:interp:eq3}, we conclude that
\[
 (b(\tau)u^{(j,k)})_0 = \delta_{j,r}\delta_{k,s}.
\]

As a consequence, if we write the polynomial $Q\in\bC[\uX]_L$ in the standard form
\[
 Q =\sum_{j+k\le L} q_{j,k} X_0^{L-j-k} X_1^j X_2^k = \sum_{j+k\le L} q_{j,k}M_{j,k}
\]
then, in terms of the corresponding sequence $u = \varphi(Q) = \sum_{j+k\le L} q_{j,k}u^{(j,k)}$, we get
\[
 |q_{r,s}| = |(b(\tau) u)_0|
           \le \cL(b) \max_{0\le i< M} |u_i|
           = \cL(b) \max_{0\le i< M} |\cD^iQ(1,0,1)|.
\]
By \eqref{basic:prop:interp:eq4} and \eqref{basic:prop:interp:eq5} we also have
$\cL(b) \le 8^M$.  The choice of $(r,s)$ being arbitrary, we conclude that
\[
 \|Q\| = \max_{r+s\le L} |q_{r,s}| \le 8^M \max_{0\le i< M} |\cD^iQ(1,e)|.
\]

For the general case, we apply the previous result to $\tau_\gamma Q$ instead of $Q$.  Since $\cD^i(\tau_\gamma Q)(1,e) = \tau_\gamma(\cD^i Q)(1,e) = \cD^iQ(1,\gamma)$ for each $i\in\bN$, this gives
\[
 \|\tau_\gamma Q\| \le 8^M \max_{0\le i< M} |\cD^i Q(1,\gamma)|.
\]
The conclusion follows as Lemma \ref{basic:lemma:derivatives} gives  $\cL(Q) \le c_1(-\gamma)^L \|\tau_\gamma Q\|$.
\end{proof}

\begin{corollary}
 \label{basic:cor:degree}
Let $\gamma\in\cG$ and $T\in\bN^*$.  Define $I_\gamma = I^{(\gamma,1)}$.  Then $I_\gamma$ is a prime ideal of rank $2$ and $I^{(\gamma,T)}$ is $I_\gamma$-primary of degree $T$.
\end{corollary}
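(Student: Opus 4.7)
The plan is to dehomogenize to the affine chart $X_0=1$ and exploit the analytic parametrization $z\mapsto(\xi+z,\eta e^z)$ of the orbit of $\cG$ through $\gamma$. That $I_\gamma$ is prime of rank $2$ is immediate once one recognizes it as the homogeneous prime ideal of the closed point $(1:\xi:\eta)\in\bP^2$, namely $(X_1-\xi X_0,\,X_2-\eta X_0)$. A Leibniz expansion $\cD^i(QP_0)=\sum_{k=0}^i\binom{i}{k}(\cD^kQ)(\cD^{i-k}P_0)$ shows that whenever a homogeneous generator $P_0$ of $I^{(\gamma,T)}$ is multiplied by anything, the product still has all its $\cD^i$-derivatives with $i<T$ vanishing at $(1,\gamma)$, since in each term $i-k<T$. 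Consequently $I^{(\gamma,T)}$ coincides with the set of all homogeneous $P$ satisfying $\cD^iP(1,\gamma)=0$ for $0\le i<T$, and its degree-$L$ part corresponds bijectively under $P\mapsto P(1,X_1,X_2)$ to $J^{(T)}\cap\bC[X_1,X_2]_{\le L}$, where
$$J^{(T)}=\{p\in\bC[X_1,X_2] : \cD_1^ip(\xi,\eta)=0\text{ for }0\le i<T\}.$$

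The central tool is the $\bC$-algebra homomorphism $\phi\colon\bC[X_1,X_2]\to\bC[[z]]$ defined by $\phi(p)(z)=p(\xi+z,\eta e^z)$. A direct computation yields $(d/dz)\phi(p)=\phi(\cD_1p)$, hence $\cD_1^ip(\xi,\eta)=i!\,[z^i]\phi(p)$, which gives $J^{(T)}=\phi^{-1}((z^T))$. Since $\phi(X_1-\xi)=z$ and $\phi(X_2-\eta)=\eta(e^z-1)$ both lie in $(z)\bC[[z]]$, the maximal ideal $\gm=(X_1-\xi,\,X_2-\eta)$ satisfies $\phi(\gm^T)\subseteq(z^T)$, producing the sandwich $\gm^T\subseteq J^{(T)}\subseteq \gm$. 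This shows $J^{(T)}$ is $\gm$-primary, so $I^{(\gamma,T)}$ is $I_\gamma$-primary.

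To finish, I must show the multiplicity equals $T$, i.e., $\dim_\bC\bC[X_1,X_2]/J^{(T)}=T$. The map $\phi$ factors through $J^{(T)}$ to give an injection $\bar\phi\colon\bC[X_1,X_2]/J^{(T)}\hookrightarrow\bC[[z]]/(z^T)$, yielding the upper bound. For the matching lower bound I invoke Proposition~\ref{basic:prop:interp}: for $L$ large enough that $M=\binom{L+2}{2}\ge T$, the map $Q\mapsto(\cD^iQ(1,\gamma))_{0\le i<M}$ is an isomorphism $\bC[\uX]_L\to\bC^M$, so projecting to the first $T$ coordinates is surjective, which through the identity $\cD^iP(1,\gamma)=i!\,[z^i]\phi(P(1,X_1,X_2))$ translates into surjectivity of $\bar\phi$. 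The only technical subtlety to watch is the Leibniz-closure step at the very beginning; everything else is the standard bridge between graded and affine data through $\phi$.
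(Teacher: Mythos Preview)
Your proof is correct. Both your argument and the paper's rest on Proposition~\ref{basic:prop:interp} to obtain surjectivity of the evaluation map in large degree, so the core ingredient is the same; the packaging differs. The paper stays entirely in the graded setting: it checks primariness directly by a Leibniz computation (if $P\notin I_\gamma$ and $j<T$ is least with $\cD^jQ(1,\gamma)\neq 0$, then $\cD^j(PQ)(1,\gamma)=P(1,\gamma)\,\cD^jQ(1,\gamma)\neq 0$), and reads off the degree as the stable value of the Hilbert function $H(I^{(\gamma,T)};D)=\dim_\bC\varphi(\bC[\uX]_D)$, which equals $T$ once $\binom{D+2}{2}\ge T$. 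You instead dehomogenize and route everything through the ring map $\phi\colon\bC[X_1,X_2]\to\bC[[z]]$, which makes the picture conceptually transparent (the ideal is literally $\phi^{-1}((z^T))$, primariness follows from $\gm$ being maximal, and the degree is read off from $\bC[[z]]/(z^T)$), at the cost of invoking the standard dictionary between saturated homogeneous ideals not containing $X_0$ and affine ideals. Your Leibniz-closure step is not extra work: the paper needs the same fact implicitly when it asserts that $I^{(\gamma,T)}_D$ is the kernel of $\varphi$ restricted to degree~$D$.
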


\begin{proof}
The ideal $I_\gamma$ is generated by the homogeneous polynomials vanishing at the point $(1,\gamma)$.  Therefore it is prime of rank $2$.  As $(I_\gamma)^T \subseteq I^{(\gamma,T)}\subseteq  I_\gamma$, the radical of $I^{(\gamma,T)}$ is $I_\gamma$.  Moreover, for any choice of homogeneous polynomials $P,Q\in\bC[\uX]$ with $P\notin I_\gamma$ and $Q\notin I^{(\gamma,T)}$, we find that $PQ\notin I^{(\gamma,T)}$.  Thus, $I^{(\gamma,T)}$ is $I_\gamma$-primary.  Finally, consider the linear map $\varphi\colon\bC[\uX]\to \bC^T$ given by $\varphi(Q)=(\cD^iQ(1,\gamma))_{0\le i<T}$ for each $Q\in\bC[\uX]$.  Then, $I_D^{(\gamma,T)}$ is the kernel of the restriction of $\varphi$ to $\bC[\uX]_D$, for each $D\in\bN$. Thus, the Hilbert function of $I^{(\gamma,T)}$ is given by $H(I^{(\gamma,T)}\,;\,D) = \dim_\bC\varphi(\bC[\uX]_D)$.  However, Proposition \ref{basic:prop:interp} shows that $\varphi(\bC[\uX]_D)=\bC^T$ when $\binom{D+2}{2}\ge T$.  Thus, for each large enough integer $D$, the value $H(I^{(\gamma,T)}\,;\,D)$ is constant equal to $T$ and so $I^{(\gamma,T)}$ has degree $T$.
\end{proof}

The following lemma provides the inductive step needed in the proof of the next two propositions.

\begin{lemma}
\label{basic:lemma:division}
Let $\gamma = (\xi,\eta) \in \cG$, let $K,L,N,T\in \bN$ with
\begin{equation}
 \label{basic:lemma:division:eq1}
 \binom{L+1}{2} < T \le \binom{L+2}{2}
 \et
 L < K \le N \le \min\left\{2K-L,\frac{3K+2}{2}\right\},
\end{equation}
and let $Q\in I_N^{(\gamma,T)}$.  Then, we can write $Q = \sum_{j=0}^2 X_j^{N-K} Q_j$ for a choice of polynomials $Q_j\in I^{(\gamma,T)}_K$ ($j=0,1,2$) satisfying
\begin{equation}
 \label{basic:lemma:division:eq2}
\sum_{j=0}^2 \cL(Q_j) \le c_3(\gamma)^K (64 K)^T \cL(Q),
\end{equation}
where $c_3(\gamma)=c_1(-\gamma)c_2(\gamma)$.
\end{lemma}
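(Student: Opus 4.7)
The plan is to first produce an elementary decomposition $Q=\sum_{j=0}^2 X_j^{N-K}R_j$ with $R_j\in\bC[\uX]_K$ by partitioning monomials, and then to correct each $R_j$ so that it lies in $I_K^{(\gamma,T)}$.  For the first step, the hypothesis $N\le (3K+2)/2$ guarantees that every monomial of degree $N$ has at least one exponent $\ge N-K$: otherwise all three exponents would be $\le N-K-1$ and their sum would be $\le 3(N-K-1)<N$.  Partitioning the monomials of $Q$ accordingly yields such a decomposition with $\sum_j\cL(R_j)=\cL(Q)$.  For the correction, since $T\le M:=\binom{L+2}{2}$, Proposition \ref{basic:prop:interp} provides for each $j$ a unique polynomial $A_j\in\bC[\uX]_L$ whose first $T$ derivatives with respect to $\cD$ at $(1,\gamma)$ match those of $R_j$ and whose derivatives of orders $T,\dots,M-1$ vanish.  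Combining the length estimate of that proposition with Lemma \ref{basic:lemma:derivatives} gives
\[
\cL(A_j) \le c_1(-\gamma)^L\,8^M\,c_2(\gamma)^K\,K^{T-1}\,\cL(R_j).
\]
Setting $\tilde A_j := X_0^{K-L}A_j\in\bC[\uX]_K$ preserves both the derivatives at $(1,\gamma)$ (since $\cD X_0=0$ and $X_0(1,\gamma)=1$) and the length, so $\tilde R_j := R_j-\tilde A_j$ belongs to $I_K^{(\gamma,T)}$.

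It remains to reabsorb the error $E := \sum_j X_j^{N-K}\tilde A_j = Q-\sum_j X_j^{N-K}\tilde R_j$, and this is where the hypothesis $N\le 2K-L$ is essential.  One can factor $E=X_0^{K-L}F$ with $F:=\sum_j X_j^{N-K}A_j\in\bC[\uX]_{N-K+L}$, and since $K-L\ge N-K$ one factors further $E=X_0^{N-K}E_0$ with $E_0 := X_0^{2K-L-N}F\in\bC[\uX]_K$.  Ideal membership is preserved throughout: both $Q$ and $\sum_j X_j^{N-K}\tilde R_j$ lie in $I_N^{(\gamma,T)}$ (the latter because each $\tilde R_j\in I_K^{(\gamma,T)}$ and $I^{(\gamma,T)}$ is an ideal), so $E\in I_N^{(\gamma,T)}$; then, again using $\cD X_0=0$ and $X_0(1,\gamma)=1$, one gets $\cD^i E_0(1,\gamma)=\cD^i F(1,\gamma)=\cD^i E(1,\gamma)=0$ for $i<T$, whence $E_0\in I_K^{(\gamma,T)}$.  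Setting $Q_0:=\tilde R_0+E_0$, $Q_1:=\tilde R_1$, $Q_2:=\tilde R_2$ gives the required decomposition $Q=\sum_j X_j^{N-K}Q_j$ with $Q_j\in I_K^{(\gamma,T)}$.

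The length bound is then straightforward bookkeeping.  Using $\cL(F)\le\sum_j\cL(A_j)$, $\cL(E_0)=\cL(F)$, and $\cL(\tilde A_j)=\cL(A_j)$, one finds $\sum_j\cL(Q_j)\le\cL(Q)+2\sum_j\cL(A_j)$.  Summing the bound above on $\cL(A_j)$ gives $\sum_j\cL(A_j)\le c_1(-\gamma)^L\,8^M\,c_2(\gamma)^K\,K^{T-1}\,\cL(Q)$.  Since $L<K$ and $c_1(-\gamma)\ge 1$, one has $c_1(-\gamma)^L c_2(\gamma)^K\le c_3(\gamma)^K$.  Moreover $8^M\le 8^{2T-1}$, a consequence of the bound $M\le T+L$ (immediate from $T>\binom{L+1}{2}$) combined with $T\ge L+1$, which is easy in all cases.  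Assembling these estimates yields the desired $\sum_j\cL(Q_j)\le c_3(\gamma)^K(64K)^T\cL(Q)$.  The main obstacle in this plan is the second paragraph: the interpolation step does not by itself repair the decomposition, and the residual error must be channeled entirely into the $X_0^{N-K}$-component; it is precisely the hypothesis $N\le 2K-L$ that makes this absorption feasible.
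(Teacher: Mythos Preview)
Your proof is correct and follows essentially the same route as the paper's. The only difference is cosmetic: the paper interpolates only for $j=1,2$ and defines $Q_0=P_0+X_0^{2K-L-N}(X_1^{N-K}R_1+X_2^{N-K}R_2)$ directly, whereas you also interpolate an $A_0$ which then cancels in your $Q_0=\tilde R_0+E_0$ (expanding gives exactly the paper's formula). The membership $Q_0\in I_K^{(\gamma,T)}$ is argued equivalently—the paper invokes that $I^{(\gamma,T)}$ is $I_\gamma$-primary with $X_0\notin I_\gamma$, while you use $\cD X_0=0$ and $X_0(1,\gamma)=1$ to the same effect.
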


\begin{proof}
Since $N > 3(N-K-1)$, any monomial in $\uX$ of degree $N$ is divisible by at least one of the monomials $X_0^{N-K}$, $X_1^{N-K}$ or $X_2^{N-K}$.  So, we can write
\[
 Q = X_0^{N-K} P_0 + X_1^{N-K} P_1 + X_2^{N-K} P_2
\]
for some homogeneous polynomials $P_0$, $P_1$, $P_2$ of degree $K$ with
\begin{equation}
 \label{basic:lemma:division:eq3}
 \cL(P_0) + \cL(P_1) + \cL(P_2) = \cL(Q).
\end{equation}
Put $M=\binom{L+2}{2}$.  Then, for each $j=1,2$, Proposition \ref{basic:prop:interp} ensures the existence of a unique polynomial $R_j\in\bC[\uX]_L$ satisfying
\[
 \cD^iR_j(1,\gamma)
  = \begin{cases}
      \cD^iP_j(1,\gamma) &\text{for $0\le i <T$,}\\
      0 &\text{for $T\le i< M$,}
    \end{cases}
\]
and shows, with the help of Lemma \ref{basic:lemma:derivatives}, that it has length
\[
 \cL(R_j)
  \le c_1(-\gamma)^L 8^M \max_{0\le i<T}|\cD^iP_j(1,\gamma)|
  \le c_1(-\gamma)^L 8^M c_2(\gamma)^K K^T \cL(P_j).
\]
As $L<K$ and $M\le 2\binom{L+1}{2}+1 \le 2T-1$, the above estimate simplifies to
\begin{equation}
 \label{basic:lemma:division:eq4}
 \cL(R_j)
  \le \frac{1}{8} c_3(\gamma)^K (64 K)^T \cL(P_j) \quad (j=1,2).
\end{equation}
Furthermore, since $2K-L\ge N$, the expressions
\[
 Q_0 := P_0 + X_0^{2K-L-N} ( X_1^{N-K} R_1 + X_2^{N-K} R_2 )
 \et
 Q_j := P_j - X_0^{K-L}R_j
 \quad (j=1,2)
\]
are homogeneous polynomials of degree $K$ which satisfy
\begin{equation}
  \label{basic:lemma:division:eq5}
  X_0^{N-K}Q_0 + X_1^{N-K}Q_1 + X_2^{N-K}Q_2 = Q.
\end{equation}
By construction, we have $Q_1,Q_2\in I^{(\gamma,T)}$.  Since $Q$ as well belongs to $I^{(\gamma,T)}$, we deduce that $X_0^{N-K}Q_0\in I^{(\gamma,T)}$ and so $Q_0\in I^{(\gamma,T)}$ because $X_0\notin I_\gamma$ (see Corollary \ref{basic:cor:degree}).  Thus, \eqref{basic:lemma:division:eq5} provides a decomposition of $Q$ with  polynomials $Q_0,Q_1,Q_2\in I^{(\gamma,T)}_K$.  Using \eqref{basic:lemma:division:eq3} and \eqref{basic:lemma:division:eq4}, we find as announced
\[
 \sum_{j=0}^2 \cL(Q_j)
  \le 2\cL(R_1) + 2\cL(R_2) + \sum_{j=0}^2 \cL(P_j)
  \le c_3(\gamma)^K (64 K)^T \cL(Q).
\]
\end{proof}

On the qualitative side, this lemma has the following useful consequence.

\begin{proposition}
\label{basic:prop:avoidingzeros}
Let $\gamma=(\xi,\eta)\in \cG$, let $D,T\in\bN^*$ with $T\le \binom{D+1}{2}$.  Then, any homogeneous element of $I^{(\gamma,T)}$ of degree $\ge D$ belongs to the ideal $J$ of $\bC[\uX]$ generated by $I^{(\gamma,T)}_D$.  Moreover, for any finite set of points $S$ of $\bP^2(\bC)$ not containing $(1:\xi:\eta)$, there exists an element of $I_D^{(\gamma,T)}$ which does not vanish at any point of $S$.
\end{proposition}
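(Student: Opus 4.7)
The plan is to prove the two assertions in turn, using the first as input for the second. For part (1), I will argue by induction on $N\ge D$ that $I_N^{(\gamma,T)}\subseteq J_N$, with the base case $N=D$ being immediate. Let $L$ denote the unique non-negative integer with $\binom{L+1}{2}<T\le\binom{L+2}{2}$; the hypothesis $T\le\binom{D+1}{2}$ yields $L\le D-1$. For the inductive step with $N\ge D+1$ and $Q\in I_N^{(\gamma,T)}$, I apply Lemma \ref{basic:lemma:division} with $K=N-1$: the required inequalities $L<K$ and $N\le\min\{2K-L,(3K+2)/2\}$ follow from $L\le D-1\le N-2$ and $N\ge 1$. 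The lemma then produces $Q=X_0Q_0+X_1Q_1+X_2Q_2$ with each $Q_j\in I_{N-1}^{(\gamma,T)}$, which lies in $J_{N-1}$ by the inductive hypothesis, so $Q\in J_N$.

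For part (2), I first observe that $I_D^{(\gamma,T)}$ has positive dimension: applying Proposition \ref{basic:prop:interp} with the value $L=D$ turns the evaluation map $\bC[\uX]_D\to\bC^{\binom{D+2}{2}}$ into an isomorphism, so projection onto the first $T$ coordinates yields a surjection onto $\bC^T$ with kernel $I_D^{(\gamma,T)}$ of dimension $\binom{D+2}{2}-T\ge D+1$. The key intermediate claim is then that for every $\alpha\in\bP^2(\bC)\setminus\{(1:\xi:\eta)\}$, some element of $I_D^{(\gamma,T)}$ does not vanish at $\alpha$. If this failed, part (1) would force every homogeneous element of $I^{(\gamma,T)}$ of degree $\ge D$ to vanish at $\alpha$; then for any homogeneous $P\in I^{(\gamma,T)}$ of smaller degree, choosing an index $k$ with $\alpha_k\ne 0$ and passing to $X_k^{D-\deg P}P\in I^{(\gamma,T)}$ would show $P(\alpha)=0$ as well. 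But by Corollary \ref{basic:cor:degree}, $I^{(\gamma,T)}$ is $I_\gamma$-primary, so its projective zero set equals $\{(1:\xi:\eta)\}$, contradicting $\alpha\ne(1:\xi:\eta)$. The conclusion follows from the standard fact that a vector space of positive dimension over the infinite field $\bC$ is not a union of finitely many proper subspaces, applied to the hyperplanes $H_\alpha=\{P\in I_D^{(\gamma,T)}\,;\,P(\alpha)=0\}$ for $\alpha\in S$.

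The main obstacle lies in part (1), where the one-step descent from $N$ to $N-1$ requires Lemma \ref{basic:lemma:division} to apply with $K=N-1$ from the very first step $N=D+1$. The delicate point is that this is exactly guaranteed by the hypothesis $T\le\binom{D+1}{2}$ via the inequality $L\le D-1$; relaxing this bound on $T$ would prevent a single step of the lemma from descending by one degree. Part (2) is then essentially formal, its one non-routine ingredient being the identification of the projective zero set of $I^{(\gamma,T)}$ with the single point $(1:\xi:\eta)$ via the primary decomposition supplied by Corollary \ref{basic:cor:degree}.
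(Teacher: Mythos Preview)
Your proof is correct and follows essentially the same approach as the paper. Both arguments prove part~(1) by induction on $N\ge D$, invoking Lemma~\ref{basic:lemma:division} with $K=N-1$ at each step (the required inequalities being checked exactly as you do), and both deduce part~(2) by combining part~(1) with the fact from Corollary~\ref{basic:cor:degree} that the zero set of $I^{(\gamma,T)}$ is the single point $(1:\xi:\eta)$; you have simply made explicit the finite-union-of-hyperplanes argument and the dimension count that the paper leaves implicit in the phrase ``which leads to the second assertion.''
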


\begin{proof}
The hypotheses on $D$ and $T$ imply that $\binom{L+1}{2} < T \le \binom{L+2}{2}$ for an integer $L$ with $0\le L<D$.  Then, for any $N\in\bN$ with $N\ge D+1$, the conditions \eqref{basic:lemma:division:eq1} of Lemma \ref{basic:lemma:division} are fulfilled with $K=N-1$ and the lemma shows that $I^{(\gamma,T)}_N$ is contained in the ideal of $\bC[\uX]$ generated by $I^{(\gamma,T)}_{N-1}$.  By induction, we conclude that $I^{(\gamma,T)}_N \subseteq J$ for each $N\ge D$.  This proves the first assertion of the proposition.  It also implies that $I^{(\gamma,T)}$ and $J$ have the same zero set in $\bP^2(\bC)$, namely $\{(1:\xi:\eta)\}$, which leads to the second assertion.
\end{proof}

\begin{proposition}
\label{basic:prop:division}
Let $\gamma = (\xi,\eta) \in \cG$ and let $D,T\in \bN^*$ with $3\le D$ and $1\le T \le \binom{[D/3]+1}{2}$.   For any integer $N\ge D$ and any polynomial $Q\in I_N^{(\gamma,T)}$, we can write $Q = \sum_{|\nu|=N-D} \uX^\nu P_\nu$ for a choice of polynomials $P_\nu\in I^{(\gamma,T)}_D$ ($\nu\in\bN^3$, $|\nu|=N-D$) satisfying
\begin{equation}
 \label{basic:prop:division:eq1}
 \sum_{|\nu|=N-D} \cL(P_\nu) \le c_3(\gamma)^{2N} N^{6T\log(N)} \cL(Q).
\end{equation}
\end{proposition}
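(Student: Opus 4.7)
The strategy is to iterate Lemma \ref{basic:lemma:division}, progressively reducing the degree from $N$ down to $D$ in $O(\log N)$ steps.

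First, fix $L\in\bN$ with $\binom{L+1}{2}<T\le\binom{L+2}{2}$. The hypothesis $T\le\binom{[D/3]+1}{2}$ forces $L+1\le[D/3]$, so $3(L+1)\le D$; in particular $L<D/3$. Next, construct a strictly decreasing sequence $N=N_0>N_1>\cdots>N_k=D$ by setting, as long as $N_i>D$,
\[
 N_{i+1}=\max\bigl\{D,\ \lceil(2N_i-2)/3\rceil,\ \lceil(N_i+L)/2\rceil\bigr\}.
\]
The definition immediately gives $2N_{i+1}-L\ge N_i$ and $(3N_{i+1}+2)/2\ge N_i$, while the inequality $L<D\le N_{i+1}$ shows $L<N_{i+1}$. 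Thus the constraints \eqref{basic:lemma:division:eq1} of Lemma \ref{basic:lemma:division} are satisfied with the pair $(N_i,N_{i+1})$ playing the role of $(N,K)$. Moreover the bounds $\lceil(2N_i-2)/3\rceil\le(2N_i+1)/3$ and $\lceil(N_i+L)/2\rceil<(N_i+N_i/3)/2+1=(2N_i+3)/3$ together yield $N_{i+1}\le (2/3)N_i+1$, hence $N_i\le(2/3)^iN+3$, so $k\le c\log(N/D)\le c\log N$ for an absolute constant $c$, and $\sum_{i=1}^k N_i\le 2N+O(\log N)$.

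Now prove by induction on $i$ that for each $i=0,1,\dots,k$ there exist polynomials $Q_\ui^{(i)}\in I^{(\gamma,T)}_{N_i}$ indexed by $\ui=(j_1,\dots,j_i)\in\{0,1,2\}^i$ such that
\[
 Q=\sum_{\ui\in\{0,1,2\}^i}\uX^{\mu(\ui)}Q_\ui^{(i)}
 \et
 \sum_{\ui}\cL(Q_\ui^{(i)})\le\cL(Q)\prod_{l=1}^i c_3(\gamma)^{N_l}(64N_l)^T,
\]
where $\mu(\ui)=\sum_{l=1}^i(N_{l-1}-N_l)e_{j_l}\in\bN^3$ has total degree $N-N_i$. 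The base case $i=0$ is trivial ($Q_\emptyset^{(0)}=Q$). The inductive step applies Lemma \ref{basic:lemma:division} with $(N,K)=(N_i,N_{i+1})$ to each $Q_\ui^{(i)}$, splitting it as $\sum_{j=0}^2 X_j^{N_i-N_{i+1}}Q_{\ui^\frown j}^{(i+1)}$ with $Q_{\ui^\frown j}^{(i+1)}\in I^{(\gamma,T)}_{N_{i+1}}$ and total length expansion bounded by the factor $c_3(\gamma)^{N_{i+1}}(64N_{i+1})^T$.

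Finally, taking $i=k$, regroup the sum by monomials: for each $\nu\in\bN^3$ with $|\nu|=N-D$, set $P_\nu=\sum_{\ui:\mu(\ui)=\nu}Q_\ui^{(k)}\in I^{(\gamma,T)}_D$. This produces the desired decomposition, with $\sum_\nu\cL(P_\nu)\le\sum_\ui\cL(Q_\ui^{(k)})\le\cL(Q)\cdot c_3(\gamma)^{\sum_{l=1}^k N_l}(64)^{Tk}\prod_{l=1}^k N_l^T$. Using $\sum N_l\le 2N+O(\log N)$, $k\le c\log N$, and $\prod N_l\le N^k$, this last product is bounded by $c_3(\gamma)^{2N}\cdot N^{O(1)}\cdot(64N)^{cT\log N}$, which for all $N$ sufficiently large is majorized by $c_3(\gamma)^{2N}N^{6T\log N}$. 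The finitely many small values of $N$ (with $D\le N$ bounded) are absorbed into the bound by the factor $c_3(\gamma)^{2N}$, and the case $N=D$ is trivial with $P_0=Q$.

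The main obstacle is the bookkeeping to verify that the lemma's constraints \eqref{basic:lemma:division:eq1} persist at every step of the iteration and that the accumulated length bound packages cleanly into the single expression $c_3(\gamma)^{2N}N^{6T\log N}$; once the sequence $(N_i)$ is chosen as above, the verification is routine since $3L<D\le N_i$ throughout.
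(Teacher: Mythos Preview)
Your approach is essentially the paper's own: iterate Lemma~\ref{basic:lemma:division} to reduce the degree geometrically from $N$ to $D$ in $O(\log N)$ steps, then collect the resulting pieces. The paper packages this as an induction on $N$, taking at each stage $K=D$ when $N\le(3D+2)/2$ and $K=\lfloor 2N/3\rfloor$ otherwise, and then invoking the induction hypothesis at degree $K$. Your explicit sequence $(N_i)$ amounts to the same iteration with a slightly different choice of $K$ (the least admissible one).

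The gap in your write-up is the final packaging of the constant. From $N_{i+1}\le(2/3)N_i+1$ you get $\sum_{l=1}^kN_l\le 2N+3k$, not $\le 2N$; the extra $3k$ in the exponent of $c_3(\gamma)$ contributes a factor $c_3(\gamma)^{O(\log N)}=N^{O(\log c_3(\gamma))}$, so the ``$N^{O(1)}$'' you write depends on $\gamma$. Consequently the threshold for ``$N$ sufficiently large'' depends on $\gamma$, and your sentence about the remaining small values of $N$ being ``absorbed into the bound by the factor $c_3(\gamma)^{2N}$'' is not a valid argument: that factor is already part of the target bound and cannot be spent twice. The proposition must hold for \emph{all} $N\ge D\ge 3$, uniformly in $\gamma$.

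This is easily repaired by following the paper's choices. Taking $K=\lfloor 2N/3\rfloor$ (no ceiling) gives $N_l\le(2/3)^lN$ exactly, hence $\sum_{l\ge 1}N_l\le 2N$ with no error term; the paper handles this via the inductive inequality $c_3^{K}\cdot c_3^{2K}=c_3^{3K}\le c_3^{2N}$. For the exponent of $N$, the paper bounds $64K\le N^4$ (valid since $N\ge 4$) so that one step costs a factor $N^{4T}$, and then checks the clean inequality
\[
 6(\log K)^2+4\log N\le 6(\log N)^2\qquad\text{for }K\le 2N/3,\ N\ge 4,
\]
which closes the induction without any large-$N$ hypothesis. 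If you redo your estimate with these choices, the argument goes through for every $N\ge D$.
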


\begin{proof}
We proceed by induction on $N$.  For $N=D$, the result is clear.  Suppose that $N>D$ and let $Q\in I_N^{(\gamma,T)}$.  We denote by $L$ the unique non-negative integer satisfying $\binom{L+1}{2} < T \le \binom{L+2}{2}$. Then, the hypotheses on $D$ and $T$ imply that $D \ge 3(L+1)$.  We also define $K=D$ if $N\le (3D+2)/2$, and $K = \lfloor 2N/3 \rfloor$ otherwise.  For this choice of $K$, we have $K\ge D$ and the conditions \eqref{basic:lemma:division:eq1} of Lemma \ref{basic:lemma:division} are fulfilled.  Moreover, since $N\ge D+1\ge 4$, we have $64 K \le N^4$ and so this lemma provides polynomials $Q_0,Q_1,Q_2\in I^{(\gamma,T)}_K$ satisfying
\begin{equation}
 \label{basic:prop:division:eq2}
 Q = \sum_{j=0}^2 X_j^{N-K}Q_j
 \et
 \sum_{j=0}^2 \cL(Q_j) \le c_3(\gamma)^K N^{4T} \cL(Q).
\end{equation}
If $K=D$, this decomposition of $Q$ has all the requested properties.  Otherwise, we have $D<K\le 2N/3<N$ and, by induction, we may assume that each $Q_j$ admits a decomposition $Q_j = \sum_{|\nu|=K-D} \uX^\nu P_{j,\nu}$ with polynomials $P_{j,\nu}\in I^{(\gamma,T)}_D$ satisfying
\[
 \sum_{|\nu|=K-D} \cL(P_{j,\nu}) \le c_3(\gamma)^{2K} K^{6T\log(K)}\cL(Q_j).
\]
Substituting these expressions in the decomposition \eqref{basic:prop:division:eq2} of $Q$ and collecting terms, we obtain a new decomposition $Q = \sum_{|\nu|=N-D} \uX^\nu P_\nu$ with polynomials $P_\nu\in I^{(\gamma,T)}_D$ satisfying
\[
 \begin{aligned}
 \sum_{|\nu|=N-D} \cL(P_\nu)
  &\le c_3(\gamma)^{2K} K^{6T\log(K)} \sum_{j=0}^2\cL(Q_j) \\
  &\le c_3(\gamma)^{3K} \exp\big(6T(\log K)^2 + 4T\log N\big) \cL(Q).
 \end{aligned}
\]
As $K\le 2N/3$ and $N\ge 4$, we have $6(\log K)^2 + 4\log N \le 6(\log N)^2$ and so \eqref{basic:prop:division:eq1} holds.
\end{proof}

%
%

\section{Distance}
\label{sec:distance}

Throughout this section, we fix a point $\gamma=(\xi,\eta) \in \cG=\bC\times\Cmult$ and denote by $(1:\gamma)$ the class of $(1,\gamma)$ in $\bP^2(\bC)$.  To alleviate the notation, we simply write $c_1$ and $c_2$ to denote respectively the constants $c_1(\gamma)$ and $c_2(\gamma)$ of Lemma \ref{basic:lemma:derivatives}, and $c_3$ to denote the constant $c_3(\gamma)$ from Lemma \ref{basic:lemma:division}.  In particular, we have
\[
 c_2 = \max\{1,|\xi|,|\eta|\} = \|(1,\gamma)\|.
\]
For each pair of integers $D\ge 0$ and $T\ge 1$, and each point $\alpha\in\bP^2(\bC)$ with representative $\ualpha = (\alpha_0, \alpha_1, \alpha_2) \in \bC^3$ of norm $\|\ualpha\|=1$, we also define
\[
 |I^{(\gamma,T)}_D|_\alpha = \sup\{ |P(\ualpha)| \,;\, P\in I^{(\gamma,T)}_D,\, \|P\|\le 1\},
\]
where $I^{(\gamma, T)}_D$ stands for the homogeneous part of degree $D$ of the ideal $I^{(\gamma, T)}$ introduced in the preceding section.  The goal of this section is to estimate this quantity in terms of the usual projective distance between $\alpha$ and $(1:\gamma)$ defined by
\begin{equation}
\label{distance:dist:def}
 \dist(\alpha,(1:\gamma))
  = \frac{\|\ualpha\wedge(1,\gamma)\|}{\|\ualpha\|\,\|(1,\gamma)\|}
  = c_2^{-1}\max\{|\alpha_1-\alpha_0\xi|,\,|\alpha_2-\alpha_0\eta|,\,|\alpha_1\eta-\alpha_2\xi|\},
\end{equation}
and of the distance from $\alpha$ to the analytic curve $A_\gamma = \{ (1:\xi+z:\eta e^z)\,;\, z\in\bC\}$ defined by
\[
 \dist(\alpha,A_\gamma)
  = \Big|\frac{\alpha_2}{\alpha_0}-\eta\exp\Big(\frac{\alpha_1}{\alpha_0}-\xi\Big)\Big|
\]
when $\alpha_0\neq 0$.  For our first estimate, we use the following lemma.

\begin{lemma}
\label{distance:lemma:dist}
Let $\alpha\in\bP^2(\bC)$ with $\dist(\alpha,(1:\gamma))\le (2c_2)^{-1}$, and let $\ualpha = (\alpha_0, \alpha_1, \alpha_2) \in \bC^3$ be a representative of $\alpha$ with $\|\ualpha\|=1$.  Then we have $|\alpha_0| \ge (2c_2)^{-1}$.
\end{lemma}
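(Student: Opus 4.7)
The plan is to extract the inequality directly from the explicit formula \eqref{distance:dist:def} for the projective distance. First I would use the hypothesis $\dist(\alpha,(1:\gamma)) \le (2c_2)^{-1}$ together with that formula to conclude
\[
 \max\{|\alpha_1-\alpha_0\xi|,\, |\alpha_2-\alpha_0\eta|,\, |\alpha_1\eta-\alpha_2\xi|\} \le \tfrac{1}{2},
\]
the first two of which are the only ones I will need.

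Next I would bound $|\alpha_1|$ and $|\alpha_2|$ in terms of $|\alpha_0|$ by the triangle inequality: from the first two inequalities above and from $|\xi|, |\eta| \le c_2$,
\[
 |\alpha_1| \le |\alpha_0|\,|\xi| + \tfrac{1}{2} \le c_2 |\alpha_0| + \tfrac{1}{2}
 \et
 |\alpha_2| \le |\alpha_0|\,|\eta| + \tfrac{1}{2} \le c_2 |\alpha_0| + \tfrac{1}{2}.
\]
Since $c_2 \ge 1$, the same bound also trivially dominates $|\alpha_0|$ itself, so all three coordinates are bounded by $c_2 |\alpha_0| + \tfrac{1}{2}$.

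Finally, taking the maximum over the three coordinates on the left and using $\|\ualpha\| = 1$, I would obtain
\[
 1 \le c_2 |\alpha_0| + \tfrac{1}{2},
\]
which rearranges to $|\alpha_0| \ge (2c_2)^{-1}$ as required. There is no real obstacle here; the argument is purely a routine triangle-inequality manipulation, and the only thing to watch is to invoke $c_2 \ge 1$ at the right moment so that $|\alpha_0|$ itself is absorbed into the majorant.
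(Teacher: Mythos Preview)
Your proof is correct and essentially the same as the paper's. The paper packages the triangle-inequality step more compactly by writing $\|\ualpha-\alpha_0(1,\gamma)\|\le c_2\,\dist(\alpha,(1:\gamma))\le 1/2$ and applying the reverse triangle inequality for the sup-norm to get $\|\alpha_0(1,\gamma)\|\ge 1/2$, hence $|\alpha_0|\ge(2c_2)^{-1}$; your coordinate-by-coordinate version unwinds exactly this computation.
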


\begin{proof}
We have $\|\ualpha-\alpha_0(1,\gamma)\| \le c_2\dist(\alpha,(1:\gamma)) \le 1/2$, so $\|\alpha_0(1,\gamma)\| \ge \|\ualpha\|-1/2 = 1/2$ and therefore $|\alpha_0|\ge (2c_2)^{-1}$.
\end{proof}

\begin{proposition}
\label{distance:prop:devP}
Let $D, T \in \bN^*$, let $P \in \bC[\uX]_D$ with $P\neq 0$, and let $\alpha$, $\ualpha$ be as in Lemma \ref{distance:lemma:dist}.  Then we have
\begin{equation}
 \label{distance:prop:devP:eq1}
 \frac{|P(\ualpha)|}{\|P\|}
 \le c_4 \max_{0\le i<T} \frac{|\cD^iP(1,\xi,\eta)|}{\|P\|}
     + c_4^D \big( \dist(\alpha,(1:\gamma))^T + \dist(\alpha,A_\gamma)\big)
\end{equation}
where $c_4=3c_2\exp(2c_2^2)$.
\end{proposition}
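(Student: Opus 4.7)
The plan is to expand $P$ along the integral curve $z\mapsto(1,\xi+z,\eta e^z)$ of the invariant derivation $\cD$ (which is the analytic curve $A_\gamma$ in projective coordinates), and to control the expansion via Cauchy's integral formula in two variables. Introduce the auxiliary function
\[
\psi(z,w):=P(1,\xi+z,\eta e^z+w).
\]
By Lemma~\ref{distance:lemma:dist}, $|\alpha_0|\ge(2c_2)^{-1}$, which allows me to set $z_0:=\alpha_1/\alpha_0-\xi$ and $w_0:=\alpha_2/\alpha_0-\eta e^{z_0}$. From \eqref{distance:dist:def} one gets
\[
 |z_0|\le 2c_2^2\,\dist(\alpha,(1:\gamma))\le c_2 \et |w_0|=\dist(\alpha,A_\gamma)=:\delta.
\]
The homogeneity of $P$ and $|\alpha_0|\le 1$ give $|P(\ualpha)|\le|\psi(z_0,w_0)|$, and the fact that $z\mapsto(1,\xi+z,\eta e^z)$ is the integral curve of $\cD$ through $(1,\xi,\eta)$ yields $\partial_z^i\psi(0,0)=\cD^iP(1,\xi,\eta)$ for all $i\ge 0$.

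Next I would apply Cauchy's integral formula on the polydisk $|z|\le R_1=2c_2^2$, $|w|\le R_2=c_2e^{2c_2^2}$. The radius $R_1$ is forced by the requirement $|z_0|/R_1\le\dist(\alpha,(1:\gamma))$, and $R_2$ is the largest value compatible with a bound $|\psi(z,w)|\le\|P\|\,c_4^D$ on the polydisk. Indeed, using $|Q(x)|\le\|Q\|(|x_0|+|x_1|+|x_2|)^D$ for a homogeneous polynomial $Q$ of degree $D$ together with the elementary inequality $1+c_2+2c_2^2\le c_2e^{2c_2^2}$ valid for $c_2\ge 1$, the quantity $1+|\xi+z|+|\eta e^z+w|$ stays $\le 3c_2e^{2c_2^2}=c_4$ on the polydisk, and hence $|\psi(z,w)|\le\|P\|\,c_4^D$ there.

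I would then expand $\psi(z_0,w_0)=\sum_{i,j\ge 0}a_{i,j}z_0^iw_0^j$, where $a_{i,0}=\cD^iP(1,\xi,\eta)/i!$, and split
\[
\psi(z_0,w_0)=S_1+S_2 \et S_1:=\sum_{i=0}^{T-1}\frac{\cD^iP(1,\xi,\eta)}{i!}z_0^i,
\]
so that $S_2$ collects the terms with $i\ge T$ and $j=0$, or $j\ge 1$. The bound $|S_1|\le e^{c_2}\max_{0\le i<T}|\cD^iP(1,\xi,\eta)|\le c_4\max_{0\le i<T}|\cD^iP(1,\xi,\eta)|$ follows at once from $|z_0|\le c_2$ and $e^{c_2}\le c_4$. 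For $S_2$, the Cauchy inequalities $|a_{i,j}|\le\|P\|\,c_4^D/(R_1^iR_2^j)$, combined with $|z_0|/R_1\le\dist(\alpha,(1:\gamma))\le(2c_2)^{-1}$ and $|w_0|/R_2=\delta/(c_2e^{2c_2^2})$, reduce the estimate to two convergent geometric sums producing a bound of the form $c_4^D\|P\|(\dist(\alpha,(1:\gamma))^T+\delta)$.

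The most delicate point is the constant bookkeeping: the geometric sums naturally introduce small multiplicative factors that must be absorbed within $c_4=3c_2\exp(2c_2^2)$. The extreme regime where $\delta$ exceeds, say, $c_2e^{2c_2^2}$ is handled separately, since there the trivial estimate $|P(\ualpha)|\le\cL(P)\le 3^D\|P\|\le c_4^D\|P\|$ is already dominated by the term $c_4^D\|P\|\delta$ on the right-hand side.
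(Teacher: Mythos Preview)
Your approach is correct and runs parallel to the paper's, with one tactical difference worth noting. Both set up the same quantities $z_0=\alpha_1/\alpha_0-\xi$ and $w_0=\alpha_2/\alpha_0-\eta e^{z_0}$ and split off the truncated Taylor sum $S_1$; the paper, however, does \emph{not} invoke a two--variable Cauchy estimate. Instead it separates the two directions: it writes $|P(\ualpha)|\le|\alpha_0|^D|f(z_0)|+|P(\ualpha)-P(\alpha_0,\alpha_1,\alpha_2-\alpha_0 w_0)|$ with $f(z)=P(1,\xi+z,\eta e^z)$, bounds the second term by the elementary polynomial estimate $(|\alpha_0|+|\alpha_1|+|\alpha_2|+|\alpha_0 w_0|)^D|w_0|$, and controls the Taylor tail of $f$ directly via $|\cD^iP(1,\gamma)|\le (3c_2)^D D^i\cL(P)$ from Lemma~\ref{basic:lemma:derivatives}. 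Because the factorial $1/i!$ survives, the tail sums cleanly to $(3c_2)^D e^{2c_2^2 D}\dist(\alpha,(1:\gamma))^T=c_4^D\dist(\alpha,(1:\gamma))^T$ with no leftover geometric-series factor. Your Cauchy bound $|a_{i,0}|\le\|P\|c_4^D/R_1^i$ loses that factorial and produces a factor like $(1-|z_0|/R_1)^{-1}\le 2$ which does \emph{not} obviously fit inside $c_4^D$ for the stated value $c_4=3c_2e^{2c_2^2}$; to recover the exact constant you would need either to revert to the Lemma~\ref{basic:lemma:derivatives} bound for the $j=0$ tail, or to accept a slightly larger $c_4$. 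Apart from this bookkeeping point, your argument is complete.
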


\begin{proof}
Without loss of generality, we may assume that $\|P\|=1$.  We set
\[
 \delta_1 = \frac{\alpha_1}{\alpha_0}-\xi
 \et
 \delta_2 = \frac{\alpha_2}{\alpha_0}-\eta e^{\delta_1},
\]
and consider the entire function $f\colon\bC\to\bC$ given by
\[
 f(z) = P(1,\xi+z, \eta e^z) \quad (z\in\bC).
\]
Since $\alpha_0^D f(\delta_1) = P(\alpha_0,\alpha_1,\alpha_2-\delta_2\alpha_0)$ and since $f^{(i)}(0)=\cD^iP(1,\xi,\eta)$ for each $i\in\bN$, we get
\[
 \begin{aligned}
 |P(\ualpha)|
   &\le |\alpha_0|^D |f(\delta_1)|
        + |P(\alpha_0,\alpha_1,\alpha_2)-P(\alpha_0,\alpha_1,\alpha_2-\delta_2\alpha_0)| \\
   &\le \sum_{i=0}^\infty \frac{1}{i!} |\cD^iP(1,\xi,\eta)|\,|\delta_1|^i
       + (|\alpha_0|+|\alpha_1|+|\alpha_2|+|\delta_2\alpha_0|)^D |\delta_2\alpha_0|\\
   &\le \sum_{i=0}^{T-1} \frac{1}{i!} |\cD^iP(1,\xi,\eta)|\,|\delta_1|^i
       + \sum_{i=T}^\infty \frac{1}{i!} |\cD^iP(1,\xi,\eta)|\,|\delta_1|^i
       + (3+|\delta_2\alpha_0|)^D |\delta_2| \\
   &\le e^{|\delta_1|} \max_{0\le i <T} |\cD^iP(1,\xi,\eta)|
       + \sum_{i=T}^\infty \frac{1}{i!} (3c_2)^D D^i |\delta_1|^i
       + (3+|\delta_2\alpha_0|)^D |\delta_2|
 \end{aligned}
\]
where the last estimate uses the upper bound $|\cD^iP(1,\xi,\eta)| \le c_2^D D^i \cL(P) \le (3c_2)^D D^i$ coming from Lemma \ref{basic:lemma:derivatives}.  To provide an upper bound for the remaining series, we note that, since $|\alpha_0|\ge (2c_2)^{-1}$, we have
\[
 |\delta_1|
  = |\alpha_0|^{-1}|\alpha_1-\alpha_0\xi|
  \le 2 c_2^2 \dist(\alpha,(1:\gamma)).
\]
As, $\dist(\alpha,(1:\gamma))\le (2c_2)^{-1}\le 1$, this gives $|\delta_1| \le c_2$ and, for each integer $i\ge T$, we can write $|\delta_1|^i \le (2c_2^2)^i \dist(\alpha,(1:\gamma))^T$.  Therefore,
\[
 \sum_{i=T}^\infty \frac{1}{i!} (3c_2)^D D^i |\delta_1|^i
  \le (3c_2)^D \dist(\alpha,(1:\gamma))^T \sum_{i=T}^\infty \frac{1}{i!} (2c_2^2D)^i
  \le c_4^D \dist(\alpha,(1:\gamma))^T.
\]
The conclusion follows because $3+|\delta_2\alpha_0| \le 4+|\eta|e^{|\delta_1|} \le 4+c_2e^{c_2} \le c_4$.
\end{proof}

As an immediate consequence, we get:

\begin{corollary}
\label{distance:cor:maj|I|}
With $\alpha,\ualpha$ as in Lemma \ref{distance:lemma:dist} and $D,T\in\bN^*$, we have
\begin{equation*}
 |I^{(\gamma,T)}_D|_\alpha
 \le c_4^D \big( \dist(\alpha,(1:\gamma))^T + \dist(\alpha,A_\gamma)\big).
\end{equation*}
\end{corollary}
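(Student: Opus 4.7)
The plan is to deduce this corollary as an immediate consequence of Proposition \ref{distance:prop:devP}. By definition of $I^{(\gamma,T)}$, any polynomial $P\in I^{(\gamma,T)}_D$ satisfies $\cD^i P(1,\xi,\eta)=0$ for all $i$ with $0\le i<T$, so the first term on the right-hand side of \eqref{distance:prop:devP:eq1} vanishes. For the non-zero such $P$ with $\|P\|\le 1$, the proposition then yields directly
\[
 |P(\ualpha)| \le c_4^D \big( \dist(\alpha,(1:\gamma))^T + \dist(\alpha,A_\gamma)\big),
\]
while the $P=0$ case is trivial. Taking the supremum over all admissible $P$ gives the stated bound on $|I^{(\gamma,T)}_D|_\alpha$.

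There is no real obstacle here: the hypothesis $\dist(\alpha,(1:\gamma))\le (2c_2)^{-1}$ from Lemma \ref{distance:lemma:dist} (implicit in the phrase ``with $\alpha,\ualpha$ as in Lemma \ref{distance:lemma:dist}'') is exactly what is needed to invoke Proposition \ref{distance:prop:devP}, and the condition $P\in I^{(\gamma,T)}_D$ is exactly what removes the derivative term. Thus the entire content of the corollary lies in rewriting Proposition \ref{distance:prop:devP} under the vanishing hypothesis defining $I^{(\gamma,T)}_D$, and the argument takes only a couple of lines.
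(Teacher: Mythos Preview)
Your proof is correct and matches the paper's approach exactly: the paper introduces the corollary with the phrase ``As an immediate consequence, we get'' and provides no further argument, so the intended proof is precisely the one-line specialization of Proposition~\ref{distance:prop:devP} to $P\in I^{(\gamma,T)}_D$ that you have written out.
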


We now turn to the problem of finding a lower bound for $|I^{(\gamma,T)}_D|_\alpha$. To this end, we first note the following consequence of Proposition \ref{basic:prop:division}.

\begin{lemma}
\label{distance:lemma:estQalpha}
Let $D, T \in \bN^*$ with $3 \le D\le T\le \binom{\lfloor D/3 \rfloor + 1}{2}$, let $\alpha\in\bP^2(\bC)$ and let  $\ualpha\in\bC^3$ be a representative of $\alpha$ with $\|\ualpha\|=1$.  Then, for any $Q\in I^{(\gamma,T)}_T$, we have
\begin{equation*}
 |Q(\ualpha)|
 \le
 c_3^{2T} T^{6T\log(T)} \cL(Q)\, |I^{(\gamma,T)}_D|_\alpha.
\end{equation*}
\end{lemma}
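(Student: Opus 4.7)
The plan is to reduce the claim to Proposition \ref{basic:prop:division} by applying it with $N = T$. The hypotheses of that proposition (namely $3 \le D$, $N \ge D$, and $1 \le T \le \binom{\lfloor D/3\rfloor + 1}{2}$) are exactly those given in the statement of the lemma, together with $D \le T$. So, given $Q \in I^{(\gamma,T)}_T$, we obtain a decomposition
\[
 Q = \sum_{|\nu| = T - D} \uX^\nu P_\nu
\]
with polynomials $P_\nu \in I^{(\gamma,T)}_D$ whose total length satisfies
\[
 \sum_{|\nu| = T - D} \cL(P_\nu) \le c_3^{2T} T^{6T\log(T)} \cL(Q).
\]

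Next, I would evaluate this decomposition at $\ualpha$ and apply the triangle inequality. Because $\|\ualpha\| = 1$, each monomial factor satisfies $|\ualpha^\nu| \le 1$, so
\[
 |Q(\ualpha)| \le \sum_{|\nu| = T - D} |P_\nu(\ualpha)|.
\]
For each nonzero $P_\nu$, the polynomial $P_\nu / \|P_\nu\|$ lies in $I^{(\gamma,T)}_D$ and has norm $1$, so by the definition of $|I^{(\gamma,T)}_D|_\alpha$ we get $|P_\nu(\ualpha)| \le \|P_\nu\| \cdot |I^{(\gamma,T)}_D|_\alpha$. Since $\|P_\nu\| \le \cL(P_\nu)$, summing over $\nu$ gives
\[
 |Q(\ualpha)| \le \Bigl(\sum_{|\nu| = T - D} \cL(P_\nu)\Bigr) |I^{(\gamma,T)}_D|_\alpha
              \le c_3^{2T} T^{6T\log(T)} \cL(Q)\, |I^{(\gamma,T)}_D|_\alpha,
\]
which is the desired bound.

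There is essentially no obstacle: the entire content is packaged in Proposition \ref{basic:prop:division}, and the remaining step is routine bookkeeping using the sub-multiplicativity of the evaluation map at a unit-norm vector and the elementary inequality $\|P\| \le \cL(P)$. The only verification worth pausing on is that the numerical hypotheses on $(D,T)$ in the lemma match exactly those required to invoke Proposition \ref{basic:prop:division} with $N = T$, which they do.
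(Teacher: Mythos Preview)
Your proof is correct and follows essentially the same route as the paper: apply Proposition~\ref{basic:prop:division} with $N=T$, then bound each $|P_\nu(\ualpha)|$ by $\cL(P_\nu)\,|I^{(\gamma,T)}_D|_\alpha$ and sum. The paper states the inequality $|P(\ualpha)|\le \cL(P)\,|I^{(\gamma,T)}_D|_\alpha$ directly without passing through $\|P\|$, but this is the same computation you carried out.
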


\begin{proof}
Fix a polynomial $Q\in I^{(\gamma,T)}_T$ and consider a decomposition of $Q$ as given by Proposition \ref{basic:prop:division} for the choice of $N=T$.  Since $|P(\ualpha)| \le \cL(P)\,|I^{(\gamma,T)}_D|_\alpha$ for any $P\in I^{(\gamma,T)}_D$, we obtain
\[
 |Q(\ualpha)|
 \le
 \sum_{|\nu|=T-D}|P_\nu(\ualpha)|
 \le
 c_3^{2T} T^{6T\log(T)} \cL(Q)\, |I^{(\gamma,T)}_D|_\alpha.
\]
\end{proof}

\begin{proposition}
\label{distance:prop:min|I|}
With the notation and hypotheses of Lemma \ref{distance:lemma:estQalpha}, we have
\begin{equation*}
  \dist(\alpha,(1:\gamma))^T \le c_5^T T^{6T\log(T)} |I^{(\gamma,T)}_D|_\alpha\,.
\end{equation*}
Moreover, if\/ $\dist(\alpha,(1:\gamma))\le (2c_2)^{-1}$, we also have
\begin{equation*}
  \dist(\alpha,A_\gamma) \le c_4 c_5^T T^{6T\log(T)} |I^{(\gamma,T)}_D|_\alpha
\end{equation*}
where $c_5 = (2c_2+3) c_3^2$, and $c_4$ is as in Proposition \ref{distance:prop:devP}.
\end{proposition}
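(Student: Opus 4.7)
The plan is to exhibit specific polynomials in $I^{(\gamma,T)}_T$ and apply Lemma \ref{distance:lemma:estQalpha} to each, thereby converting upper bounds for $|I^{(\gamma,T)}_D|_\alpha$ into upper bounds for the two distances. Write $\ualpha = (\alpha_0,\alpha_1,\alpha_2)$ with $\|\ualpha\|=1$, set $\beta_1 = \alpha_1 - \xi\alpha_0$ and $\beta_2 = \alpha_2 - \eta\alpha_0$, so that $c_2\dist(\alpha,(1:\gamma)) = \max(|\beta_1|,|\beta_2|,|\eta\beta_1-\xi\beta_2|)$ by \eqref{distance:dist:def}.

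For the first inequality, I would check that $Q_1 = (X_1-\xi X_0)^T$ and $Q_2 = (X_2-\eta X_0)^T$ both lie in $I^{(\gamma,T)}_T$. For $Q_1$, this is immediate from $\cD(X_1-\xi X_0)=X_0$ and $\cD X_0=0$, which give $\cD^i Q_1 = \frac{T!}{(T-i)!}(X_1-\xi X_0)^{T-i}X_0^i$, vanishing at $(1,\gamma)$ for $i<T$. For $Q_2$, using $\cD(X_2-\eta X_0)=(X_2-\eta X_0)+\eta X_0$, a short induction shows $\cD^i Q_2$ is divisible by $(X_2-\eta X_0)^{T-i}$, hence again vanishes at $(1,\gamma)$ for $i<T$. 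Both polynomials have length $\le(1+c_2)^T$, so Lemma \ref{distance:lemma:estQalpha} yields $|\beta_j|^T \le c_3^{2T}T^{6T\log T}(1+c_2)^T|I^{(\gamma,T)}_D|_\alpha$ for $j=1,2$. Combining with $|\eta\beta_1-\xi\beta_2| \le 2c_2\max(|\beta_1|,|\beta_2|)$ and $\dist(\alpha,(1:\gamma)) = c_2^{-1}\max(|\beta_1|,|\beta_2|,|\eta\beta_1-\xi\beta_2|)$, and using $2(1+c_2)\le 2c_2+3$, delivers the first bound.

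For the second inequality, the hypothesis $\dist(\alpha,(1:\gamma))\le(2c_2)^{-1}$ combined with Lemma \ref{distance:lemma:dist} forces $|\alpha_0|\ge(2c_2)^{-1}$. I would use the homogenized truncated Taylor polynomial
\[
 \tilde F = X_0^{T-1}X_2 - \eta\sum_{k=0}^{T-1}\frac{(X_1-\xi X_0)^k X_0^{T-k}}{k!} \in \bC[\uX]_T,
\]
which models $X_0^T\bigl(X_2/X_0 - \eta\exp((X_1-\xi X_0)/X_0)\bigr)$. A direct Leibniz computation gives $\cD\tilde F = \tilde F + \eta(X_1-\xi X_0)^{T-1}X_0/(T-1)!$, and by induction
\[
 \cD^i\tilde F = \tilde F + \eta\sum_{j=1}^{i}\frac{(X_1-\xi X_0)^{T-j}X_0^j}{(T-j)!} \quad (1\le i\le T-1),
\]
each summand vanishing at $(1,\gamma)$ because $X_1-\xi X_0$ does and $T-j\ge 1$. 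Hence $\tilde F \in I^{(\gamma,T)}_T$ with $\cL(\tilde F)\le(1+c_2)e^{1+c_2}$. Evaluation at $\ualpha$ gives $\tilde F(\ualpha)/\alpha_0^T = \alpha_2/\alpha_0 - \eta\sum_{k<T}(\beta_1/\alpha_0)^k/k!$, which differs from $\alpha_2/\alpha_0 - \eta\exp(\beta_1/\alpha_0)$ by the exponential tail $\eta\sum_{k\ge T}(\beta_1/\alpha_0)^k/k!$. Bounding $|\tilde F(\ualpha)|$ via Lemma \ref{distance:lemma:estQalpha}, the tail via $\sum_{k\ge T}r^k/k!\le(r^T/T!)e^r$ with $r=|\beta_1/\alpha_0|\le c_2$, and substituting the first-inequality bound for $|\beta_1|^T$, expresses $\dist(\alpha,A_\gamma)$ as a sum of two terms each of shape $\text{const}(\gamma)\cdot c_3^{2T}T^{6T\log T}|I^{(\gamma,T)}_D|_\alpha$, with constants that assemble into $c_4 c_5^T = 3c_2\exp(2c_2^2)(2c_2+3)^T c_3^{2T}$.

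The main obstacle is the construction and verification of $\tilde F$: identifying a polynomial in $I^{(\gamma,T)}_T$ whose value at $\ualpha$ is essentially $\alpha_0^T\dist(\alpha,A_\gamma)$ requires homogenizing the truncated exponential, and the careful Leibniz computation above is what guarantees that iterated $\cD$-derivatives retain the factor $(X_1-\xi X_0)^{T-j}$ needed to vanish at $(1,\gamma)$; this dictates the particular form of $\tilde F$ and the subsequent bookkeeping of constants.
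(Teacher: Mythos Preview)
Your proposal is correct and follows essentially the same route as the paper: apply Lemma~\ref{distance:lemma:estQalpha} to explicit polynomials in $I^{(\gamma,T)}_T$, namely powers of linear forms vanishing at $(1,\gamma)$ for the first inequality and the homogenized truncated exponential $\tilde F$ (the paper's $Q$) for the second, then handle the exponential tail via the first bound. The only cosmetic differences are that the paper uses a single $\alpha$-dependent linear form $M$ with $\|M\|\le 1$ and $\dist(\alpha,(1:\gamma))=|M(\ualpha)|$ (yielding the constant $3^T$ and making the final assembly $c_4((2c_2)^T+3^T)\le c_4(2c_2+3)^T$ immediate), and it estimates the tail via $|\alpha_1/\alpha_0-\xi|^i\le (2c_2^2)^i\dist(\alpha,(1:\gamma))^T$ rather than your $(r^T/T!)e^r$ bound; your explicit formula for $\cD^i\tilde F$ is a nice verification that the paper leaves implicit.
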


\begin{proof}
The formula \eqref{distance:dist:def} shows that $\dist(\alpha,(1:\gamma))=|M(\ualpha)|$ for some linear form $M\in\bC[\uX]_1$ with $M(1,\gamma)=0$ and $\|M\|\le 1$.  Then, as $M^T\in I_T^{(\gamma,T)}$ and $\cL(M)\le 3$, Lemma \ref{distance:lemma:estQalpha} gives
\begin{equation}
 \label{distance:prop:min|I|:eq1}
 \dist(\alpha,(1:\gamma))^T
  = |M(\ualpha)|^T
  \le 3^T c_3^{2T} T^{6T\log(T)} |I^{(\gamma,T)}_D|_\alpha.
\end{equation}
Now, assume that $\dist(\alpha,(1:\gamma))\le (2c_2)^{-1}$, and write $\ualpha=(\alpha_0,\alpha_1,\alpha_2)$.  As the polynomial
\[
 Q(\uX) = X_0^{T-1}X_2 - \eta \sum_{i=0}^{T-1}\frac{1}{i!}(X_1-\xi X_0)^iX_0^{T-i}
\]
also belongs to $I_T^{(\gamma,T)}$, the same result combined with Lemma \ref{distance:lemma:dist} leads to the estimate
\begin{equation}
 \label{distance:prop:min|I|:eq2}
 \Big| \frac{\alpha_2}{\alpha_0} - \eta \sum_{i=0}^{T-1} \frac{1}{i!} \Big( \frac{\alpha_1}{\alpha_0}-\xi\Big)^i \Big|
  = |\alpha_0|^{-T} |Q(\ualpha)|
  \le c_4 (2c_2)^T c_3^{2T} T^{6T\log(T)} |I^{(\gamma,T)}_D|_\alpha
\end{equation}
using $\cL(Q) \le 1+|\eta|\exp(1+|\xi|) \le c_4$. Arguing as in the proof of Proposition \ref{distance:prop:devP}, we also note that, for each integer $i\ge T$, we have $|\alpha_1/\alpha_0-\xi|^i \le \dist(\alpha,(1:\gamma))^T (2c_2^2)^i$ and therefore
\begin{equation}
 \label{distance:prop:min|I|:eq3}
 \Big| \eta \sum_{i=T}^\infty \frac{1}{i!} \Big( \frac{\alpha_1}{\alpha_0}-\xi\Big)^i \Big|
  \le \dist(\alpha,(1:\gamma))^T c_2 \sum_{i=T}^\infty \frac{(2c_2^2)^i}{i!}
  \le c_4 \dist(\alpha,(1:\gamma))^T.
\end{equation}
Combining \eqref{distance:prop:min|I|:eq1}, \eqref{distance:prop:min|I|:eq2} and \eqref{distance:prop:min|I|:eq3}, we get
\[
 \begin{aligned}
 \dist(\alpha,A_\gamma)
  &\le |\alpha_0|^{-T} |Q(\ualpha)| + c_4 \dist(\alpha,(1:\gamma))^T \\
  &\le c_4 ( (2c_2)^T + 3^T ) c_3^{2T} T^{6T\log(T)} |I^{(\gamma,T)}_D|_\alpha.
 \end{aligned}
\]
\end{proof}

%
%
\section{Multiplicity of the resultant}
\label{sec:result}

In this section, we introduce the last crucial tool that we need for the proof of our main theorem.  It consists in a lower bound for the multiplicity of the resultant of homogeneous polynomials in $m+1$ variables at certain $(m+1)$-tuples of such polynomials.  As in Section \ref{sec:prelim}, we set $\uX=(X_0,\dots,X_m)$ where $m$ is any positive integer.  In the applications, we will restrict to $m=2$.

Recall that a \emph{regular sequence} of $\bC[\uX]$ is a finite sequence of polynomials $(P_0,\dots,P_s)$ with $0\le s\le m$ such that $P_0\neq 0$ and such that the multiplication by $P_j$ is injective in $\bC[\uX] / (P_0,\dots,P_{j-1})$ for $j=1,\dots,s$. When $P_0,\dots,P_s$ are homogeneous, this condition is equivalent to asking that the ideal $(P_0,\dots,P_s)$ has rank $s+1$. We first establish a lemma.

\begin{lemma}
\label{result:lemma:decomp}
Let $P_0,\dots,P_m$ be a regular sequence of $\bC[\uX]$ contained in $\bC[\uX]_D$ for some integer $D\ge 1$, and let $\nu$ be an integer with $\nu\ge (m+1)D-m$.  Then there exist subspaces $E_0,\dots,E_m$ of $\bC[\uX]_{\nu-D}$ with $\dim_\bC(E_m)=D^m$ such that
\[
 \bC[\uX]_\nu = E_0P_0 \oplus\cdots\oplus E_mP_m.
\]
\end{lemma}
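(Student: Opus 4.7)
My plan is to build the spaces $E_j$ inductively as complements, and to read off the global decomposition and the dimension of $E_m$ from the Hilbert series of the complete intersection $\bC[\uX]/(P_0,\dots,P_m)$. Concretely, set $E_0 = \bC[\uX]_{\nu-D}$ and, for $j = 1, \dots, m$, let $E_j$ be any vector-space complement of $(P_0, \dots, P_{j-1})_{\nu-D}$ inside $\bC[\uX]_{\nu-D}$. I will then prove, by induction on $j \in \{0, 1, \dots, m\}$, the intermediate statement
\[
 (P_0, \dots, P_j)_\nu = E_0 P_0 \oplus \cdots \oplus E_j P_j.
\]
The case $j = 0$ is immediate since $P_0$ is a non-zero-divisor. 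For the inductive step, the key input is the regularity hypothesis, which ensures that $P_j$ is a non-zero-divisor on $\bC[\uX]/(P_0, \dots, P_{j-1})$: if $Q \in E_j$ satisfies $QP_j \in (P_0, \dots, P_{j-1})_\nu$, then $Q \in (P_0, \dots, P_{j-1})_{\nu-D}$, forcing $Q = 0$ by the choice of $E_j$. This gives $E_j P_j \cap (P_0, \dots, P_{j-1})_\nu = \{0\}$; surjectivity onto $(P_0, \dots, P_j)_\nu$ follows from writing any element of this space as $R + B P_j$ with $R \in (P_0, \dots, P_{j-1})_\nu$ and $B \in \bC[\uX]_{\nu-D}$, then splitting $B$ according to the direct sum $\bC[\uX]_{\nu-D} = E_j \oplus (P_0, \dots, P_{j-1})_{\nu-D}$.

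To upgrade the case $j = m$ to the full decomposition $\bC[\uX]_\nu = E_0 P_0 \oplus \cdots \oplus E_m P_m$ in the statement, I need $(P_0, \dots, P_m)_\nu = \bC[\uX]_\nu$. This follows from the Hilbert series of the complete intersection $\bC[\uX]/(P_0, \dots, P_m)$, which equals $(1-t^D)^{m+1}/(1-t)^{m+1} = (1 + t + \cdots + t^{D-1})^{m+1}$, a polynomial of degree $(m+1)(D-1)$. The hypothesis $\nu \ge (m+1)D - m > (m+1)(D-1)$ then forces $(\bC[\uX]/(P_0, \dots, P_m))_\nu = 0$.

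For the dimension of $E_m$, I compute
\[
 \dim_\bC E_m = \dim_\bC \bC[\uX]_{\nu-D} - \dim_\bC (P_0, \dots, P_{m-1})_{\nu-D} = \dim_\bC (\bC[\uX]/(P_0, \dots, P_{m-1}))_{\nu-D}.
\]
The Hilbert series of $\bC[\uX]/(P_0, \dots, P_{m-1})$ equals $(1-t^D)^m/(1-t)^{m+1} = (1 + t + \cdots + t^{D-1})^m/(1 - t)$; writing the numerator as $\sum_k a_k\, t^k$, a polynomial of degree $m(D-1)$ with $\sum_k a_k = D^m$, the coefficients in all degrees $\ge m(D-1)$ of the quotient by $1-t$ equal $D^m$. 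Since $\nu - D \ge m(D-1)$, I conclude $\dim_\bC E_m = D^m$. The construction is otherwise mechanical, and the main (very mild) subtlety is that the bound $\nu \ge (m+1)D - m$ plays a double role here: it is exactly the threshold beyond which the complete-intersection quotient vanishes in degree $\nu$, and simultaneously the threshold beyond which the partial-intersection Hilbert function stabilizes to its asymptotic value $D^m$ in degree $\nu - D$.
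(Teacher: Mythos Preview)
Your proof is correct and follows essentially the same approach as the paper: both define $E_j$ as a complement of $(P_0,\dots,P_{j-1})_{\nu-D}$ in $\bC[\uX]_{\nu-D}$ and use regularity of the sequence to establish directness of the sum $(P_0,\dots,P_j)_\nu = E_0P_0\oplus\cdots\oplus E_jP_j$. The only difference is in the endgame: the paper computes $\dim E_m$ and shows $(P_0,\dots,P_m)_\nu=\bC[\uX]_\nu$ by deriving the recursion $\dim E_{j+1}(\nu)=\dim E_j(\nu)-\dim E_j(\nu-D)$ and solving it explicitly, whereas you invoke the standard Hilbert-series formula $(1-t^D)^{k}/(1-t)^{m+1}$ for the quotient by a homogeneous regular sequence, which packages the same computation.
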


\begin{proof}
For $j=0,\dots,m$, put $I_j=(P_0,\dots,P_j)$ and, for each integer $\nu\ge 0$, choose a subspace $E_{j+1}(\nu)$ of $\bC[\uX]_\nu$ such that
\[
 \bC[\uX]_\nu = (I_j)_\nu \oplus E_{j+1}(\nu).
\]
Put also $I_{-1}=(0)$ and $E_0(\nu)=\bC[\uX]_\nu$ so that the above holds for $j=-1$, and extend the definitions to negative integers $\nu$ by putting $\bC[\uX]_\nu = (I_j)_\nu = E_{j+1}(\nu) = \{0\}$ for $j=-1,0,\dots,m$ when $\nu<0$.  Then, for each $\nu\in\bZ$ and each $j=0,\dots,m$, we have an exact sequence
\begin{equation}
\label{result:lemma:decomp:eq1}
 0 \fleche
   \Big(\bC[\uX]/I_{j-1}\Big)_{\nu-D}
   \overset{\times P_j}{\fleche}
   \Big(\bC[\uX]/I_{j-1}\Big)_{\nu}
   \fleche
   \Big(\bC[\uX]/I_j\Big)_{\nu}
   \fleche 0,
\end{equation}
where the first non-trivial map comes from multiplication by $P_j$ in $\bC[\uX]$ while the second is induced by the identity map in $\bC[\uX]$.  As the inclusion of $E_{j+1}(\nu)$ in $\bC[\uX]_\nu$ induces an isomorphism between $E_{j+1}(\nu)$ and $(\bC[\uX]/I_j)_{\nu}$ for each $\nu\in\bZ$ and $j=-1,0,\dots,m$, it follows that
\[
 (I_j)_\nu = E_j(\nu-D)P_j \oplus (I_{j-1})_\nu
 \quad
 (\nu\in\bZ,\ 0\le j \le m).
\]
Since $(I_{-1})_\nu = \{0\}$, combining these decompositions leads to
\begin{equation}
\label{result:lemma:decomp:eq2}
 (I_m)_\nu = \bigoplus_{k=0}^m E_{m-k}(\nu-D)P_{m-k}
\end{equation}
for each $\nu\in\bZ$.  On the other hand, at the level of dimensions, the exactness of the sequence \eqref{result:lemma:decomp:eq1} gives
\begin{equation}
\label{result:lemma:decomp:eq3}
 \dim_\bC E_{j+1}(\nu) = \dim_\bC E_j(\nu) - \dim_\bC E_j(\nu-D)
 \quad
 (\nu\in\bZ,\ 0\le j \le m).
\end{equation}
Since $\dim_\bC E_0(\nu) = (\nu+m)\cdots(\nu+1)/m!$ for each $\nu\ge -m$, we deduce by induction that, for each $j=0,\dots,m$, there exists a polynomial $\chi_j(T)\in\bQ[T]$ of degree $m-j$ with leading coefficient $D^j/(m-j)!$ such that $\dim_\bC E_j(\nu)=\chi_j(\nu)$ for each $\nu\ge jD-m$.  In particular, this gives $\dim_\bC E_m(\nu)=D^m$ for each $\nu\ge mD-m$.  Then \eqref{result:lemma:decomp:eq3} with $j=m$ implies that $E_{m+1}(\nu)=\{0\}$ for each $\nu\ge (m+1)D-m$ and so $(I_m)_\nu=\bC[\uX]_\nu$ for these values of $\nu$.  The conclusion of the lemma then follows from \eqref{result:lemma:decomp:eq2}.
\end{proof}

\begin{theorem}
\label{result:thm:mult}
Let $I$ be an homogeneous ideal of $\bC[\uX]$.  Suppose that, for some integer $D\ge 1$, the set of common zeros of the elements of $I_D$ in $\bP^m(\bC)$ is finite and non-empty.  Then, the generic resultant for homogeneous polynomials of degree $D$ in $m+1$ variables vanishes up to order $\deg(I)$ at each point of $(I_D)^{m+1}$.
\end{theorem}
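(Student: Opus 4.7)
My plan is to bound the multiplicity of $\Res_D$ at $(P_0,\ldots,P_m)$ by relating it, through a one-parameter deformation, to the dimension of the cokernel of an associated multiplication map, which in turn is bounded below by the Hilbert function of $I$.

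Fix $\nu\ge (m+1)D-m$ large enough that $\dim_\bC(\bC[\uX]/I)_\nu = \deg(I)$; this is possible because $V(I)\subseteq V(I_D)$ is finite, so $\bC[\uX]/I$ is zero-dimensional and its Hilbert function stabilizes at $\deg(I)$. For each $(Q_0,\ldots,Q_m)\in\bC[\uX]_D^{m+1}$ introduce the multiplication map
\[
 \Phi(Q)\colon \bigoplus_{j=0}^m \bC[\uX]_{\nu-D} \fleche \bC[\uX]_\nu,
 \quad (E_0,\ldots,E_m)\mapsto \sum_{j=0}^m E_j Q_j.
\]
By Lemma~\ref{result:lemma:decomp}, $\Phi(Q)$ is surjective whenever $(Q_0,\ldots,Q_m)$ is a regular sequence; moreover, Macaulay's construction identifies $\Res_D(Q)$, up to a factor that is non-zero for generic $Q$, with a specific maximal square minor of the matrix of $\Phi(Q)$.

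Next, choose $(R_0,\ldots,R_m)\in\bC[\uX]_D^{m+1}$ generic enough that $(R_0,\ldots,R_m)$ has no common zero in $\bP^m(\bC)$, and consider the family $Q(t)=(P_j+tR_j)_j$. For all but finitely many $t\in\bC$, the tuple $Q(t)$ is a regular sequence with no common zero in $\bP^m$, so $f(t):=\Res_D(Q(t))$ is a non-zero polynomial in $t$. By the Taylor expansion of $\Res_D$ along a generic line, $\ord_{t=0}f(t)$ equals the full multiplicity of $\Res_D$ at $(P_0,\ldots,P_m)$ for sufficiently generic $R$. The crucial linear-algebra input is that for any family $M(t)\colon V\to W$ of linear maps which is surjective for generic $t$, every maximal square minor of $M(t)$ vanishes at $t=0$ to order at least $\dim_\bC\operatorname{coker}(M(0))$ (a routine Schur-complement calculation after putting $M(0)$ in block form). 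Applied to $\Phi(Q(t))$, this yields
\[
 \ord_{t=0}f(t) \ge \dim_\bC\operatorname{coker}(\Phi(P)) = \dim_\bC(\bC[\uX]/(P_0,\ldots,P_m))_\nu.
\]

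Finally, since $(P_0,\ldots,P_m)\subseteq I$, the quotient $\bC[\uX]/(P_0,\ldots,P_m)$ surjects onto $\bC[\uX]/I$, and hence $\dim_\bC(\bC[\uX]/(P_0,\ldots,P_m))_\nu \ge \dim_\bC(\bC[\uX]/I)_\nu = \deg(I)$. Combining yields $\ord_{(P_0,\ldots,P_m)}\Res_D \ge \deg(I)$, as required. The main obstacle is the Macaulay extraction: one must check that for the chosen generic perturbation direction, the extraneous factor relating the maximal minor to $\Res_D$ does not vanish at $(P_0,\ldots,P_m)$, so that the order of vanishing of $\Res_D(Q(t))$ is indeed bounded below by the cokernel dimension. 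This is an open condition on $R$, generically satisfied; alternatively, one may argue that $\Res_D$ is the gcd of all maximal minors of the matrix of $\Phi$ up to scalar, and compare orders of vanishing accordingly.
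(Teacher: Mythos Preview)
Your overall strategy coincides with the paper's: build a determinant out of a multiplication map into $\bC[\uX]_\nu$, observe that when the inputs lie in $I_D$ the image lands in $I_\nu$ so the matrix has $\deg(I)$ vanishing rows (equivalently, your cokernel estimate), and transfer the resulting order of vanishing to $\Res_D$.  The linear-algebra step is fine; the gap is in the transfer, and your proposed fixes do not work.

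You claim that the non-vanishing of the Macaulay extraneous factor at $(P_0,\dots,P_m)$ ``is an open condition on $R$, generically satisfied''.  It is not: the extraneous factor $\epsilon$ is a polynomial in the coefficients of $Q$, so $\epsilon(Q(0))=\epsilon(P)$ is a fixed number, independent of the perturbation $R$.  If $\epsilon(P)=0$ then $\ord_{t=0}\epsilon(Q(t))\ge 1$ for \emph{every} $R$, and your bound on $\ord_{t=0}\Res_D(Q(t))$ is off by exactly that amount.  Your alternative, that $\Res_D$ is the gcd of all maximal minors, does not help either: being the gcd over the polynomial ring in the coefficients of $Q$ says nothing about multiplicities at a specific point (compare $x,y\in\bC[x,y]$, whose gcd is $1$ while both vanish at the origin).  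So you still owe an argument that $\epsilon$ is not identically zero on $I_D^{m+1}$, and then a continuity step.

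The paper handles this by making the map \emph{square} and pinning down the extraneous factor.  Using Lemma~\ref{result:lemma:decomp} for a fixed regular sequence $P_0,\dots,P_{m-1}\in I_D$, $P_m\in\bC[\uX]_D$, one obtains subspaces $E_j\subseteq\bC[\uX]_{\nu-D}$ with $\dim E_m=D^m$ and $\bigoplus_j E_jP_j=\bC[\uX]_\nu$, so the map $\varphi_Q\colon\prod_j E_j\to\bC[\uX]_\nu$ has a single determinant $\Phi(Q)$.  One checks $\Res_D\mid\Phi$; since $\Phi$ has degree exactly $D^m$ in $Q_m$, the quotient $\Psi=\Phi/\Res_D$ is independent of $Q_m$.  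As $\Phi(P_0,\dots,P_m)\neq 0$ with $P_0,\dots,P_{m-1}\in I_D$, the factor $\Psi$ is not identically zero on $I_D^{m+1}$; on the Zariski-open locus where $\Psi\neq 0$ the multiplicity of $\Res_D$ equals that of $\Phi$, which is at least $\deg(I)$ by the vanishing-rows argument, and one concludes by continuity.  The missing ingredient in your sketch is precisely this: a construction that pins the extraneous factor down enough to see it is generically non-zero on $I_D^{m+1}$.
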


\begin{proof}
Since the elements of $I_D$ have finitely many common zeros in $\bP^m(\bC)$, the subspace $I_D$ of $\bC[\uX]_D$ contains a regular sequence $P_0,\dots,P_{m-1}$ of length $m$.  Moreover, as the elements of $\bC[\uX]_D$ have no common zeros in $\bP^m(\bC)$, this sequence can be extended to a regular sequence $P_0,\dots,P_{m-1},P_m$ for some $P_m\in\bC[\uX]_D$.  Fix an integer $\nu\ge (m+1)D-m$ large enough so that the Hilbert function of $I$ at $\nu$ is $H(I;\nu)=\deg(I)$, and choose subspaces $E_0,\dots,E_m$ of $\bC[\uX]_{\nu-D}$ as in Lemma \ref{result:lemma:decomp}.  For each $(m+1)$-tuple $\uQ=(Q_0,\dots,Q_m)\in \bC[\uX]_D^{m+1}$, we define a linear map
\[
\begin{aligned}
  \varphi_\uQ\,:\, E_0\times\cdots\times E_m &\longrightarrow \bC[\uX]_\nu\\
                    (A_0,\dots,A_m) &\longmapsto A_0Q_0+\cdots+A_mQ_m\,.
\end{aligned}
\]
Then, by construction, for the choice of $\uP=(P_0,\dots,P_m)$, the map $\varphi_\uP$ is an isomorphism.

Form a basis $\cA$ of $E_0\times\cdots\times E_m$ by concatenating bases of $0\times\cdots\times E_i\times\cdots\times 0$ for $i=0,\dots,m$, so that the last $D^m$ elements of $\cA$ form a basis of $0\times\cdots\times 0\times E_m$.  Since $H(I;\nu)=\deg(I)$, the set $I_\nu$ is a subspace of $\bC[\uX]_\nu$ of codimension $\deg(I)$ and so there is also a basis $\cB$ of $\bC[\uX]_\nu$ whose last elements past the first $\deg(I)$ form a basis of $I_\nu$.  For each $\uQ\in E_0\times\cdots\times E_m$, we denote by $M_\uQ$ the matrix of the linear map $\varphi_\uQ$ with respect to the bases $\cA$ and $\cB$ (its $j$-th column represents the coordinates of the image of the $j$-th element of $\cA$ in the basis $\cB$).  Then, the map
\[
 \begin{aligned}
  \Phi\,:\quad \bC[\uX]_D^{m+1}\quad &\longrightarrow \bC\\
            \uQ=(Q_0,\dots,Q_m) &\longmapsto \det(M_\uQ)
 \end{aligned}
\]
is a multihomogenous polynomial map which is homogeneous of degree $D^m$ in the last component $Q_m$.  For each $\uQ \in I_D^{m+1}$, the first $\deg(I)$ rows of $M_\uQ$ vanish because the image of $\varphi_\uQ$ is contained in $I_\nu$. It follows from this that all partial derivatives of $\Phi$ of order less than $\deg(I)$ vanish at each point of $I_D^{m+1}$.  In particular, $\Phi$ vanishes at each point of $I_D^{m+1}$ and so it is divisible by the resultant in degree $D$: we have
\begin{equation}
 \label{result:thm:mult:eq1}
 \Phi(\uQ) = \Psi(\uQ) \Res_D(\uQ)
\end{equation}
where $\Psi\colon \bC[\uX]_D^{m+1}\to \bC$ is also a polynomial map.  Since the resultant is homogeneous of degree $D^m$ on each factor of $\bC[\uX]_D^{m+1}$ and since $\Phi$ is of the same degree on the last factor, the map $\Psi$ has degree $0$ on that factor.  This means that $\Psi(Q_0,\dots,Q_m)$ is independent of $Q_m$. Since $\Phi(\uP)\neq 0$ and since $P_0,\dots,P_{m-1}\in I_D$, we deduce that the restriction of $\Psi$ to $I_D^{m+1}$ is not the zero map and so the condition $\Psi(\uQ)\neq 0$ defines a non-empty Zariski open subset $\cU$ of $I_D^{m+1}$.  As the map $\Phi$ vanishes to order at least $\deg(I)$ at each point of $\cU$, the factorization \eqref{result:thm:mult:eq1} implies that the resultant vanishes up to order $\deg(I)$ at the same points and therefore, by continuity, vanishes up to order $\deg(I)$ at each point of $I_D^{m+1}$.
\end{proof}

\begin{remark}
Let $I$ and $D$ be as in Theorem \ref{result:thm:mult}.  If $I$ contains the defining ideal of an irreducible subvariety $Z$ of $\bP^m_\bQ$ of dimension $r\ge 0$, then an adaptation of the above argument shows more generally that the Chow form of $Z$ in degree $D$ vanishes up to order $\deg(I)$ at each point of $(I_D)^{r+1}$.
\end{remark}

The applications of Theorem \ref{result:thm:mult} that we give below use the notation of Section \ref{sec:basic}.  In particular, we assume from now on that $m=2$ so that $\uX=(X_0,X_1,X_2)$.  We first prove three lemmas.

\begin{lemma}
\label{result:lemma:derivation:irred}
Let $R$ be an irreducible homogeneous polynomial of $\bQ[\uX]$.  Then $R$ divides $\cD R$ if and only if $R$ is a constant multiple of either $X_0$ or $X_2$.
\end{lemma}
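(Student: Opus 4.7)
The plan is to first dispatch the ``if'' direction by direct calculation: $\cD X_0 = 0$ and $\cD X_2 = X_2$, so both $X_0$ and $X_2$ (and any nonzero scalar multiple) trivially divide their $\cD$-image.

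For the converse, I would start from the observation that $\cD$ preserves the degree of a homogeneous polynomial, since each of the summands $X_0\,\partial/\partial X_1$ and $X_2\,\partial/\partial X_2$ is degree-preserving on homogeneous polynomials. Hence, if $R$ divides $\cD R$, the quotient must be a scalar and we have $\cD R = \lambda R$ for some $\lambda \in \bQ$. The task reduces to classifying the irreducible homogeneous $R \in \bQ[\uX]$ satisfying such an eigenvalue equation.

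To carry out the classification, I would write $R = \sum_{j=0}^{d^*} X_1^j A_j(X_0,X_2)$ with $A_{d^*} \neq 0$ and each $A_j \in \bQ[X_0,X_2]$ homogeneous of degree $\deg(R)-j$, and then compare coefficients of powers of $X_1$ in $\cD R = \lambda R$. The coefficient of $X_1^{d^*}$ gives $X_2\,\partial A_{d^*}/\partial X_2 = \lambda A_{d^*}$, which, via the eigenspace decomposition of the Euler-type operator $X_2\,\partial/\partial X_2$ on $\bQ[X_0,X_2]$, forces $\lambda \in \bN$ and $A_{d^*} = c\, X_0^{\deg(R)-d^*-\lambda}\, X_2^\lambda$ for some $c \in \bQ$, $c\neq 0$. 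Provided $d^* \geq 1$, the coefficient of $X_1^{d^*-1}$ then yields the equation $(X_2\,\partial/\partial X_2 - \lambda) A_{d^*-1} = -d^*\, X_0\, A_{d^*}$; the right side is a nonzero monomial of $X_2$-degree exactly $\lambda$, while the operator on the left annihilates precisely the $X_2^\lambda$-component of any element of $\bQ[X_0,X_2]$, giving a contradiction. Hence $d^* = 0$, so $R \in \bQ[X_0,X_2]$ and $R = c\, X_0^{\deg(R)-\lambda}\, X_2^\lambda$. The irreducibility of $R$ over $\bQ$ then leaves only the options $R = c\,X_0$ (with $\lambda=0$, $\deg R = 1$) or $R = c\,X_2$ (with $\lambda=1$, $\deg R = 1$).

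The only delicate step is the eigenspace bookkeeping at the top two $X_1$-coefficients; once it is in place, the argument is routine. A stylistic alternative would be to integrate the vector field $\cD$ (its flow being $(x_0,x_1,x_2)\mapsto(x_0,x_1+tx_0,x_2 e^t)$) to turn the eigenvalue equation into the functional identity $R(x_0,x_1+tx_0,x_2e^t) = e^{\lambda t} R(x_0,x_1,x_2)$ and then extract the same conclusion by setting $x_0=1$ and matching Laurent expansions in $t$, but the coefficient-comparison approach above keeps everything inside $\bQ[\uX]$.
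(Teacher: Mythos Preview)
Your proof is correct. Both you and the paper reduce immediately to the eigenvector equation $\cD R=\lambda R$, but you then proceed along different decompositions. The paper expands $R$ according to its $X_2$-degree, observes that $X_2^k\bQ[X_0,X_1]_{D-k}$ is the generalized eigenspace of $\cD$ for the eigenvalue $k$ (since $X_0\,\partial/\partial X_1$ is nilpotent on $\bQ[X_0,X_1]_{D-k}$), and then checks that the genuine eigenspace for each $k$ is the line spanned by $X_0^{D-k}X_2^k$. You instead expand $R$ by its $X_1$-degree and exploit that $X_2\,\partial/\partial X_2$ is already diagonal on $\bQ[X_0,X_2]$; a two-step descent (top coefficient forces $A_{d^*}$ to be a monomial, next coefficient yields a contradiction in the $X_2^\lambda$-component) kills the $X_1$-dependence outright. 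Your route is a bit more economical---it never needs the full Jordan picture of $\cD$ on $\bQ[\uX]_D$---while the paper's route yields that picture as a byproduct, which is conceptually pleasant even if not used elsewhere. Either way the endgame is the same: $R$ is a monomial in $X_0,X_2$, and irreducibility finishes.
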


\begin{proof}
Suppose first that $R|\cD R$ and let $D=\deg(R)$.  Since $\cD R$ is also homogeneous of degree $D$, this hypothesis means that $R$ is an eigenvector of the differential operator $\cD$ acting on $\bQ[\uX]_D$.  We observe that, for each $k=0,\dots,D$, the subspace $X_2^k\bQ[X_0,X_1]_{D-k}$ is the kernel of $(\cD-k)^{D-k+1}$, and so the product $\prod_{k=0}^D (\cD-k)^{D-k+1}$ induces the zero operator on $\bQ[\uX]_D$.  Thus the eigenvalues of $\cD$ are the integers $0,1,\dots,D$ and we find that, for each $k=0,1,\dots,D$, the eigenspace for $k$ is generated by the monomial $X_2^kX_0^{D-k}$.  So $R$ is a multiple of such a monomial and, as it is irreducible, we conclude that it has degree $D=1$ and is a multiple of either $X_0$ or
 $X_2$.  The converse is clear.
\end{proof}

\begin{lemma}
\label{result:lemma:gcd}
Let $D$ be a positive integer and let $P\in\bQ[\uX]_D$ with $X_0\nmid P$ and $X_2\nmid P$.  If an irreducible homogeneous polynomial $R\in\bQ[\uX]$ divides $P,\cD{P},\dots,\cD^kP$ for some integer $k\ge 0$, then $R^{k+1}$ divides $P$.  In particular, the polynomials $P,\cD P,\dots,\cD^DP$ have no common irreducible factor in $\bQ[\uX]$.
\end{lemma}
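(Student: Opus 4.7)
The plan is to track the exact order of vanishing of $\cD^jP$ along the irreducible factor $R$, and use characteristic zero to show this order drops by exactly one at each application of $\cD$. The first observation is that the two prohibited forms of $R$ are ruled out automatically: since $R\mid P$ by hypothesis, the assumptions $X_0\nmid P$ and $X_2\nmid P$ imply $R$ is not a constant multiple of $X_0$ or $X_2$, so Lemma \ref{result:lemma:derivation:irred} yields $R\nmid\cD R$. This is the key algebraic fact that drives the whole proof.

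Let $e\ge 1$ denote the largest integer with $R^e\mid P$, and write $P=R^eQ$ with $R\nmid Q$. I will prove by induction on $j\in\{0,1,\dots,e\}$ that $R$ divides $\cD^jP$ with exact order $e-j$. The base case $j=0$ is the definition of $e$. For the inductive step, writing $\cD^{j-1}P=R^{e-j+1}\tilde Q$ with $R\nmid\tilde Q$, Leibniz's rule gives
\[
 \cD^jP \;=\; R^{e-j}\bigl((e-j+1)(\cD R)\tilde Q + R\,\cD\tilde Q\bigr),
\]
and the bracketed factor is congruent to $(e-j+1)(\cD R)\tilde Q$ modulo $R$. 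Since $R$ is prime and none of the factors $e-j+1$ (nonzero in $\bQ$ because $j\le e$), $\cD R$, or $\tilde Q$ is divisible by $R$, the bracketed factor is coprime to $R$, pinning the exact order at $e-j$. Taking $j=e$ gives $R\nmid\cD^eP$; combined with the hypothesis that $R$ divides $P,\cD P,\dots,\cD^kP$, this forces $k\le e-1$, hence $R^{k+1}\mid R^e\mid P$.

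The "in particular" clause is then immediate: a common irreducible factor $R$ of $P,\cD P,\dots,\cD^DP$ would, by what was just proved, satisfy $R^{D+1}\mid P$, contradicting $\deg(P)=D<(D+1)\deg(R)$. The only point that requires a moment's attention is ensuring that at each inductive step the co-factor $\tilde Q$ is truly coprime to $R$, which is exactly the content of the induction hypothesis; there is no deeper obstacle, since we work in characteristic zero throughout so the integers $e-j+1$ never vanish in $\bQ$.
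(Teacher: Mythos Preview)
Your proof is correct and follows essentially the same approach as the paper's: both write $P=R^eQ$ with $R\nmid Q$, use Lemma~\ref{result:lemma:derivation:irred} to get $R\nmid\cD R$, and then show that $\cD^eP$ is not divisible by $R$ (the paper states directly that $\cD^eP\equiv(\cD R)^eQ\pmod R$ up to a nonzero scalar, while you carry out the step-by-step induction tracking the exact $R$-order of each $\cD^jP$). The ``in particular'' clause is handled identically.
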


\begin{proof}
Let $R$ be an irreducible factor of $P$ in $\bQ[\uX]$, and write $P=R^eQ$ for some positive integer $e\le D$ and some homogeneous polynomial $Q\in\bQ[\uX]$ not divisible by $R$.  Then, for $i=0,\dots,e-1$, the polynomial $\cD^iP$ is divisible by $R$ while $\cD^eP$ is congruent to $(\cD R)^e Q$ modulo $R$.  However, by Lemma \ref{result:lemma:derivation:irred}, the hypothesis on $P$ implies that $R\nmid \cD R$.  So $e$ is the largest integer for which $R$ divides $P,\cD{P},\dots,\cD^{e-1}P$, and the result follows.
\end{proof}

For the next results, we denote respectively by $\pi_1\colon\cG\to\bC$ and by $\pi_2\colon\cG\to\Cmult$ the projections from $\cG=\bC\times\Cmult$ to its first and second factors.

\begin{lemma}
 \label{result:lemma:stab}
Let $R$ be an irreducible homogeneous polynomial of $\bQ[\uX]$. Then $\tau_\gamma(R)$ is irreducible for any $\gamma\in \cG$.  Moreover, assume that $R$ is not a multiple of either $X_0$ or $X_2$, and denote by $\Gamma_R$ the set of all $\gamma\in \cG$ such that $R$ divides $\tau_\gamma(R)$.  Then either $\pi_1(\Gamma_R)$ is reduced to $\{0\}$ or $\pi_2(\Gamma_R)$ is a cyclic subgroup of $\Cmult$ of order at most $\deg(R)$.
\end{lemma}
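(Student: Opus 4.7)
For the first assertion, observe that $\tau_\gamma\colon\bC[\uX]\to\bC[\uX]$ is a $\bC$-algebra automorphism with inverse $\tau_{-\gamma}$ (as recorded at the start of Section~\ref{sec:basic}). Such an automorphism preserves irreducibility in $\bC[\uX]$, so $\tau_\gamma R$ is irreducible whenever $R$ is.

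For the second assertion, first note that since $\tau_\gamma R$ has the same degree $D=\deg R$ as $R$, the divisibility $R\mid\tau_\gamma R$ in $\bC[\uX]$ is equivalent to $\tau_\gamma R = c_\gamma R$ for a unique $c_\gamma\in\Cmult$. Combined with $\tau_{\gamma+\gamma'}=\tau_\gamma\circ\tau_{\gamma'}$, this makes $\Gamma_R$ a subgroup of $\cG$. Now suppose $\pi_1(\Gamma_R)\neq\{0\}$ and choose $\gamma_0=(\xi_0,\eta_0)\in\Gamma_R$ with $\xi_0\neq 0$. The plan is to extract from this single hypothesis the structural identity $R(X_0,X_1,0)=\lambda X_0^D$ for some $\lambda\in\Cmult$. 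Setting $X_2=0$ in $\tau_{\gamma_0}R=c_{\gamma_0}R$ yields $S(X_0,\xi_0 X_0+X_1)=c_{\gamma_0}S(X_0,X_1)$ with $S(X_0,X_1):=R(X_0,X_1,0)\neq 0$ (nonzero because $X_2\nmid R$). Factor $S=X_0^k T$ with $X_0\nmid T$: then the coefficient of $X_1^{D-k}$ in $T$ is nonzero and is preserved by the substitution $X_1\mapsto\xi_0 X_0+X_1$, forcing $c_{\gamma_0}=1$. The identity $T(X_0,X_1+\xi_0 X_0)=T(X_0,X_1)$ together with Taylor's formula and $\xi_0\neq 0$ forces $\partial T/\partial X_1=0$; since $X_0\nmid T$ this reduces $T$ to a nonzero constant $\lambda$, so $S=\lambda X_0^D$.

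In particular, $R=\lambda X_0^D + X_2 R_1$ for some $R_1\in\bC[\uX]_{D-1}$, and $R(0,X_1,X_2)=\sum_{k\in K_0}r_k X_1^{D-k} X_2^k$ with $K_0\subseteq\{1,\dots,D\}$ nonempty (nonempty because $X_0\nmid R$; all $k\geq 1$ because $R\equiv\lambda X_0^D\pmod{X_2}$). For every $\gamma=(\xi,\eta)\in\Gamma_R$ I claim $c_\gamma=1$: if $\xi\neq 0$ the previous paragraph applied with $\gamma$ in place of $\gamma_0$ gives this, while if $\xi=0$ the transformation $\tau_{(0,\eta)}$ leaves invariant the coefficient $\lambda\neq 0$ of the monomial $X_0^D$, forcing $c_\gamma\lambda=\lambda$. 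Specializing $X_0=0$ in $\tau_\gamma R=R$ then gives $R(0,X_1,\eta X_2)=R(0,X_1,X_2)$, whence $\eta^k=1$ for every $k\in K_0$. Therefore $\pi_2(\Gamma_R)\subseteq\bigcap_{k\in K_0}\mu_k=\mu_{\gcd(K_0)}$, which is cyclic of order $\gcd(K_0)\leq\min(K_0)\leq D=\deg R$. The main obstacle is the derivation of $R(X_0,X_1,0)=\lambda X_0^D$ from the single assumption that $\Gamma_R$ contains an element with nonzero first coordinate; once this is in hand, the remainder reduces to coefficient comparison and the observation that an intersection of finite cyclic subgroups of $\Cmult$ is cyclic.
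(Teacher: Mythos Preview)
Your proof is correct but follows a different route from the paper's. The paper expands $R=\sum_{j=0}^d X_2^j A_j(X_0,X_1)$ and, for a single $\gamma=(\xi,\eta)\in\Gamma_R$ with $\tau_\gamma R=\lambda R$, compares coefficients to obtain $\eta^j A_j(X_0,\xi X_0+X_1)=\lambda A_j(X_0,X_1)$; matching leading $X_1$-coefficients gives $\eta^j=\lambda$ whenever $A_j\neq 0$. The dichotomy is then immediate: if at least two $A_j$ are nonzero, $\eta$ is a root of unity of order at most $d$; if only $A_0\neq 0$ (forced by $X_2\nmid R$), then $R=A_0(X_0,X_1)$ has positive $X_1$-degree (since $X_0\nmid R$) and the translation identity forces $\xi=0$. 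By contrast, you assume $\pi_1(\Gamma_R)\neq\{0\}$, specialize at $X_2=0$ to pin down $R(X_0,X_1,0)=\lambda X_0^D$, use this to show $c_\gamma=1$ for \emph{every} $\gamma\in\Gamma_R$, and then specialize at $X_0=0$ to trap $\pi_2(\Gamma_R)$ inside $\mu_{\gcd(K_0)}$. The paper's argument is shorter and avoids having to establish $c_\gamma\equiv 1$; your argument, while longer, yields the sharper containment $\pi_2(\Gamma_R)\subseteq\mu_{\gcd(K_0)}$ and the extra structural fact that $\Gamma_R$ actually fixes $R$ (not just up to scalar) once $\pi_1(\Gamma_R)\neq\{0\}$.
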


\begin{proof}
The first assertion follows simply from the fact that each $\tau_\gamma$ is an automorphism of $\bQ[\uX]$. To prove the second one, we first note that $\Gamma_R$ is a subgroup of $\cG$.  Let $\gamma=(\xi,\eta)$ be an arbitrary element of $\Gamma_R$.  Since $\tau_\gamma(R)$ has the same degree as $R$, we have $\tau_\gamma(R)=\lambda R$ for some $\lambda\in\Cmult$.  Writing $R=\sum_{j=0}^d X_2^jA_j(X_0,X_1)$, this condition translates into $\eta^j A_j(X_0,\xi X_0+X_1) = \lambda A_j(X_0,X_1)$ for each $j=0,\dots,d$.  When $A_j\neq 0$, this relation implies that $\eta^j=\lambda$.  So, if there are at least two indices $j$ with $A_j\neq 0$, then $\eta$ is a root of unity of order at most $d$ and, the choice of $(\xi,\eta)\in\Gamma_R$ being arbitrary, we conclude that $\pi_2(\Gamma_R)$ is a finite thus cyclic subgroup of $\Cmult$ of order at most $d$. Otherwise, assuming that $X_0$ and $X_2$ do not divide $R$, we obtain that $R = A_0(X_0,X_1)$ is of positive degree in $X_1$, and the equality $A_0(X_0,\xi X_0+X_1) = \lambda A_0(X_0,X_1)$ implies that $\lambda=1$ and $\xi=0$.  Thus, in that case, we have $\pi_1(\Gamma_R)=\{0\}$.
\end{proof}

\begin{theorem}
 \label{result:thm:Gamma}
Let $\Sigma$ be a non-empty finite subset of $\cG$ and let $T$ be a positive integer.  Denote by $I$ the ideal of\/ $\bC[\uX]$ generated by the homogeneous polynomials $P$ satisfying
\[
 (\cD^iP)(1,\gamma) = 0 \quad \text{for each \ $\gamma\in\Sigma$ \ and each \ $i=0,\dots,T-1$.}
\]
Suppose that there exist a finite subset $\Sigma_1$ of $\cG$ and an integer $T_1\ge 0$ such that
\begin{equation}
 \label{result:thm:Gamma:eq1}
 D < (T_1+1)\min\{|\pi_1(\Sigma_1)|,|\pi_2(\Sigma_1)|\}
 \et
 (T+T_1)|\Sigma+\Sigma_1| < \binom{D+2}{2}
\end{equation}
where $\Sigma+\Sigma_1 = \{ \gamma+\gamma_1 \,;\,\gamma\in\Sigma,\, \gamma_1\in\Sigma_1 \}$ denotes the sumset of\/ $\Sigma$ and $\Sigma_1$ in $\cG$.  Then, the resultant in degree $D$ vanishes up to order $T\,|\Sigma|$ at each point of $(I_D)^3$.
\end{theorem}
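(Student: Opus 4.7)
The plan is to verify the two hypotheses of Theorem \ref{result:thm:mult} applied to $I$: that $V(I_D)\subset\bP^2(\bC)$ is finite and non-empty, and that $\deg(I)\geq T|\Sigma|$. The degree is immediate: by the very definition $I=\bigcap_{\gamma\in\Sigma}I^{(\gamma,T)}$ is an intersection of $|\Sigma|$ primary components with pairwise distinct zero-dimensional associated primes $I_\gamma$, each of degree $T$ by Corollary \ref{basic:cor:degree}; hence $\deg(I)=T|\Sigma|$. Non-emptiness of $V(I_D)$ is automatic, since $V(I)\subseteq V(I_D)$.

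The heart of the argument is the finiteness of $V(I_D)$, which I would prove by contradiction. Suppose some irreducible homogeneous $R\in\bC[\uX]$ divides every element of $I_D$. The inequality $(T+T_1)|\Sigma+\Sigma_1|<\binom{D+2}{2}$ gives, by a trivial dimension count, a non-zero element $Q$ in the degree-$D$ part of $J:=\bigcap_{\gamma\in\Sigma+\Sigma_1}I^{(\gamma,T+T_1)}$. Factoring $Q=X_0^aX_2^bQ''$ with $X_0,X_2\nmid Q''$, the identity $\cD^i(X_0^aX_2^bQ'')=X_0^aX_2^b(\cD+b)^iQ''$ and a short induction on $i$ force $Q''\in J_{D-a-b}$. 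The key point is that for every $\gamma_1\in\Sigma_1$ and every $j=0,\dots,T_1$, the polynomial $\cD^j\tau_{\gamma_1}(Q)$ lies in $I_D$, because
\[
 (\cD^i\cD^j\tau_{\gamma_1}Q)(1,\gamma)=(\cD^{i+j}Q)(1,\gamma+\gamma_1)=0
\]
as soon as $\gamma\in\Sigma$ and $i+j<T+T_1$, which accommodates all $i<T$ and $j\leq T_1$.

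Assume first that $R$ is not a constant multiple of $X_0$ or $X_2$. Writing $\cD^j\tau_{\gamma_1}(Q)=(\text{scalar})\cdot X_0^aX_2^b\tau_{\gamma_1}((\cD+b)^jQ'')$, the divisibility $R\mid\cD^j\tau_{\gamma_1}(Q)$ combined with $R\neq cX_0,cX_2$ gives $R\mid\tau_{\gamma_1}((\cD+b)^jQ'')$, equivalently $\tau_{-\gamma_1}(R)\mid(\cD+b)^jQ''$ for $j=0,\dots,T_1$. A second induction on $j$ upgrades this to $\tau_{-\gamma_1}(R)\mid\cD^jQ''$ for every such $j$, at which point Lemma \ref{result:lemma:gcd} (applicable since $X_0,X_2\nmid Q''$) yields $\tau_{-\gamma_1}(R)^{T_1+1}\mid Q''$ for every $\gamma_1\in\Sigma_1$. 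Lemma \ref{result:lemma:stab} controls the number of distinct $\tau_{-\gamma_1}(R)$: these are indexed by the distinct cosets of $\Gamma_R$ represented in $\Sigma_1$, which number at least $|\pi_1(\Sigma_1)|$ in the case $\pi_1(\Gamma_R)=\{0\}$, and at least $|\pi_2(\Sigma_1)|/\deg R$ in the case $|\pi_2(\Gamma_R)|\leq\deg R$. In both cases, $\deg Q''\geq(T_1+1)\min\{|\pi_1(\Sigma_1)|,|\pi_2(\Sigma_1)|\}>D\geq\deg Q''$, a contradiction.

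The remaining cases $R=cX_0$ and $R=cX_2$ should be dispatched by a preliminary reduction: since $X_0$ and $X_2$ are non-zero-divisors modulo $J$, one has $J_D\cap(X_0)=X_0J_{D-1}$ and $J_D\cap(X_2)=X_2J_{D-1}$, and a descent on the degree (noting $J_0=\{0\}$) allows one to choose $Q\in J_D$ with $X_0,X_2\nmid Q$; then $R=cX_0$ would force $X_0\mid\tau_{\gamma_1}(Q)$ and hence $X_0\mid Q$, contradicting the choice, and analogously for $X_2$. Once $V(I_D)$ is known to be finite, Theorem \ref{result:thm:mult} directly supplies the conclusion, vanishing of $\Res_D$ to order $\deg(I)=T|\Sigma|$ at each point of $(I_D)^3$. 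I expect the main obstacle to be the counting step combining Lemmas \ref{result:lemma:gcd} and \ref{result:lemma:stab}, which is precisely where the hypothesis $D<(T_1+1)\min\{|\pi_1(\Sigma_1)|,|\pi_2(\Sigma_1)|\}$ is used; the $R=cX_0,cX_2$ subtleties are technical but subordinate.
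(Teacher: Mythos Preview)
Your proposal is correct and follows essentially the same route as the paper: compute $\deg(I)=T|\Sigma|$ via Corollary~\ref{basic:cor:degree}, produce an auxiliary polynomial in degree $D$ from the dimension inequality, show that the family $\{\cD^j\tau_{\gamma_1}(\,\cdot\,)\}_{\gamma_1\in\Sigma_1,\,0\le j\le T_1}$ lands in $I_D$, and then combine Lemmas~\ref{result:lemma:gcd} and \ref{result:lemma:stab} to rule out a common irreducible factor, reaching the degree contradiction with the first inequality in \eqref{result:thm:Gamma:eq1} before invoking Theorem~\ref{result:thm:mult}.

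The only substantive difference is in how the $X_0,X_2$-divisibility is dealt with. The paper makes a single clean normalization at the outset, replacing the auxiliary polynomial $P$ by $X_1^{k+\ell}P/(X_0^kX_2^\ell)$ so as to stay in degree $D$ while ensuring $X_0,X_2\nmid P$; after that, any irreducible common factor $R$ of the family $\tau_\gamma(\cD^iP)$ automatically satisfies $R\neq cX_0,cX_2$, and no separate case is needed. Your version instead carries the factorization $Q=X_0^aX_2^bQ''$ through the argument via the shift identity $\cD^j(X_0^aX_2^bF)=X_0^aX_2^b(\cD+b)^jF$, and then disposes of $R=cX_0,cX_2$ by a non-zero-divisor/descent argument. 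Both are valid; the paper's normalization is shorter and avoids the $(\cD+b)^j$ induction, while your approach has the mild advantage of making explicit why $X_0,X_2$ can be stripped off without leaving~$J$.
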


\begin{proof}
We have $I=\cap_{\gamma\in\Sigma}I^{(\gamma,T)}$ where, according to Corollary \ref{basic:cor:degree}, the ideals $I^{(\gamma,T)}$ are primary for distinct prime ideals of rank $2$.  Furthermore they all have the same degree $T$, and so $\deg(I)=T\,|\Sigma|$.

The second condition in \eqref{result:thm:Gamma:eq1} implies the existence of a non-zero polynomial $P \in \bC[\uX]_D$ satisfying
\[
 (\cD^iP)(1,\gamma) = 0 \quad \text{for each \ $\gamma\in\Sigma+\Sigma_1$ \ and each \ $i=0,\dots,T+T_1-1$.}
\]
Fix such a polynomial $P$.  If it is divisible by $X_0$ or by $X_2$, then its quotient by that variable possesses the same vanishing property.  Thus, upon dividing $P$ by a suitable monomial of the form $X_0^kX_2^\ell$ and multiplying the result by $X_1^{k+\ell}$ to restore the degree, we may assume that $P$ is not divisible by $X_0$ nor by $X_2$.  By construction, the polynomials $\tau_\gamma(\cD^iP)$ belong to $I$ for each $\gamma\in\Sigma_1$ and each $i=0,\dots,T_1$. We claim that the latter have no non-constant common factor.  For, suppose they have such a common factor $R$.  Choose it to be homogeneous and irreducible.  As $P$ is not divisible by $X_0$ nor by $X_2$, the same holds for $R$. Define $\Gamma_R$ as in Lemma \ref{result:lemma:stab}, and denote by $\Sigma_2$ a minimal subset of $\Sigma_1$ such that $\Sigma_2+\Gamma_R= \Sigma_1+\Gamma_R$.  For any pair of distinct elements $\gamma,\gamma'$ of $\Sigma_2$, we have $\gamma-\gamma'\notin\Gamma_R$, thus $R$ does not divide $\tau_{\gamma-\gamma'}(R)$, and so the irreducible polynomials $\tau_{-\gamma}(R)$ and $\tau_{-\gamma'}(R)$ are not associated.  Moreover, the choice of $R$ implies that $\tau_{-\gamma}(R)$ divides  $\cD^iP$ for $i=0,1,\dots,T_1$.   By Lemma \ref{result:lemma:gcd}, this means that $P$ is divisible by $\tau_{-\gamma}(R)^{T_1+1}$.  Thus $P$ is divisible by $\prod_{\gamma\in\Sigma_2} \tau_{-\gamma}(R)^{T_1+1}$ and so
\begin{equation}
 \label{result:thm:Gamma:eq2}
 D=\deg(P) \ge (T_1+1)\,|\Sigma_2|\,\deg(R).
\end{equation}
According to Lemma \ref{result:lemma:stab}, either we have $\pi_1(\Gamma_R)=\{0\}$ or $\pi_2(\Gamma_R)$ is cyclic of order at most $\deg(R)$.  In the first case, the equality $\Sigma_2+\Gamma_R= \Sigma_1+\Gamma_R$ implies that $\pi_1(\Sigma_2) = \pi_1(\Sigma_1)$ and from \eqref{result:thm:Gamma:eq2} we deduce that $D\ge (T_1+1)\,|\pi_1(\Sigma_1)|$ against the hypothesis \eqref{result:thm:Gamma:eq1}.  In the second case, it implies that $|\pi_2(\Sigma_2)| \ge |\pi_2(\Sigma_1)|/\deg(R)$ and \eqref{result:thm:Gamma:eq2} leads to $D\ge (T_1+1)\,|\pi_2(\Sigma_1)|$ once again in contradiction with \eqref{result:thm:Gamma:eq1}.

Since the polynomials $\tau_\gamma(\cD^iP)$ with $\gamma\in\Sigma_1$ and $i=0,\dots,T_1$ all belong to $I_D$ and share no common factor, the set of zeros of $I_D$ in $\bP^m(\bC)$ is finite.  As this set contains $\Sigma$, it is also non-empty.  Therefore, by Theorem \ref{result:thm:mult}, the resultant in degree $D$ vanishes up to order $\deg(I) = T\,|\Sigma|$ at each point of $(I_D)^3$.
\end{proof}

In the case where $\Sigma$ consists of just one point $\gamma$, the ideal $I$ of the theorem is simply $I^{(\gamma,T)}$, and for the choice of $\Sigma_1=\{e\}$ and $T_1=D$, the condition \eqref{result:thm:Gamma:eq1} reduces to $T\le \binom{D+1}{2}$.  The conclusion becomes:

\begin{corollary}
 \label{result:cor:1pt}
Let $\gamma\in\cG$ and let $D,T\in\bN^*$ with $T\le \binom{D+1}{2}$.  Then, the resultant in degree $D$ vanishes up to order $T$ at each triple $(P,Q,R)$ of elements of $I^{(\gamma,T)}_D$.
\end{corollary}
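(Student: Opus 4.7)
The plan is to derive this corollary as an immediate specialization of Theorem \ref{result:thm:Gamma} to the case of a single point. First I would take $\Sigma=\{\gamma\}$, so that the ideal $I$ defined in that theorem coincides with $I^{(\gamma,T)}$ by construction, and the conclusion's order of vanishing $T\,|\Sigma|$ becomes exactly the target $T$.

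Next I need to exhibit an auxiliary pair $(\Sigma_1,T_1)$ satisfying the two conditions \eqref{result:thm:Gamma:eq1}. The minimal natural choice is $\Sigma_1=\{e\}$, where $e=(0,1)$ is the neutral element of $\cG$, together with $T_1=D$. With this selection, $\pi_1(\Sigma_1)=\{0\}$ and $\pi_2(\Sigma_1)=\{1\}$ each have cardinality $1$, so the first condition becomes $D<(D+1)\cdot 1$, which clearly holds.

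For the second condition, the sumset $\Sigma+\Sigma_1=\{\gamma\}$ again has cardinality $1$, so the required inequality reduces to $T+D<\binom{D+2}{2}$. Using the identity $\binom{D+2}{2}-D=\binom{D+1}{2}+1$, this is equivalent to $T\le\binom{D+1}{2}$, precisely the hypothesis of the corollary. Applying Theorem \ref{result:thm:Gamma} then delivers the conclusion: the resultant in degree $D$ vanishes up to order $T$ at every triple in $(I^{(\gamma,T)}_D)^3$.

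There is no real obstacle here; the content of the corollary is entirely concentrated in Theorem \ref{result:thm:Gamma}, and the task is simply to check that the single-point case fits its hypotheses with the mildest possible auxiliary data, which the choice $\Sigma_1=\{e\}$, $T_1=D$ accomplishes.
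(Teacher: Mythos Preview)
Your proposal is correct and matches the paper's own derivation exactly: the paper also specializes Theorem~\ref{result:thm:Gamma} with $\Sigma=\{\gamma\}$, $\Sigma_1=\{e\}$ and $T_1=D$, observing that condition~\eqref{result:thm:Gamma:eq1} then reduces to $T\le\binom{D+1}{2}$.
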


%
%

\section{Construction of a subvariety of dimension $0$}
\label{sec:constr}

The first part of the proof of our main theorem consists in constructing, for each sufficiently large integer $D$, a zero-dimensional subvariety $Z$ of $\bP^2_\bQ$ with small height relative to a certain convex body.  In this section, we define a convex body $\cC$ of $\bC[\uX]_D = \bC[X_0,X_1,X_2]_D$ of the appropriate form and provide an estimate for the height of $\bP^2$ relative to $\cC$.  Then, we use this to construct a zero-dimensional subvariety $Z$ with small height $h_\cC(Z)$ assuming the existence of a non-zero homogeneous polynomial $P\in\bZ[\uX]_D$ whose first derivatives with respect to $\cD$ belong to $\cC$.  The rest of the section is devoted to a posteriori estimates for the degree and standard height of $Z$.

\begin{proposition}
\label{constr:prop:convex}
Let $D,T\in\bN^*$ and let $Y,U>0$ with
\begin{equation}
 \label{constr:prop:convex:eq1}
 T \le \binom{D+1}{2}
 \et
 2T\log(c_6) \le Y
\end{equation}
where $c_6=8(2+|\xi|+|\eta|^{-1})$.  Then, for the choice of convex body
\[
 \cC = \{ P\in\bC[\uX]_D \,;\, \|P\|\le e^Y, \, \max_{0\le i<T} |\cD^iP(1,\xi,\eta)| \le e^{-U}\},
\]
we have $h_\cC(\bP^2) \le -TU + 3D^2Y + 21\log(3)D^3$.
\end{proposition}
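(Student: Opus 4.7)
The plan is to bound $|\Res_D(P_0,P_1,P_2)|$ uniformly over $(P_0,P_1,P_2) \in \cC^3$ by decomposing each $P_j$ as $P_j = Q_j + R_j$ with $Q_j \in I_D^{(\gamma,T)}$ and $R_j$ of small norm, then exploiting the fact (from Corollary \ref{result:cor:1pt}) that $\Res_D$ vanishes to order at least $T$ at every point of $(I_D^{(\gamma,T)})^3$ via a one-variable Schwarz-lemma argument. The main obstacle is arranging the decomposition so that the ``ideal part'' $Q_j$ and the ``small part'' $R_j$ are both controlled simultaneously; once set up, the contour estimate is essentially mechanical.

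For the decomposition, let $L$ be the smallest non-negative integer with $M := \binom{L+2}{2} \ge T$. The hypothesis $T \le \binom{D+1}{2}$ forces $L \le D-1$, and a case-by-case check gives $M \le 2T$. Given $P \in \cC$, Proposition \ref{basic:prop:interp} produces a unique $S \in \bC[\uX]_L$ with $\cD^iS(1,\xi,\eta) = \cD^iP(1,\xi,\eta)$ for $0 \le i < T$ and $\cD^iS(1,\xi,\eta) = 0$ for $T \le i < M$, whose length satisfies
\[
 \cL(S) \le c_1(-\gamma)^L\,8^M\,e^{-U} \le c_6^M\, e^{-U} \le e^{Y-U},
\]
using $c_1(-\gamma) \ge 1$, $L \le M$, the identity $c_6 = 8\, c_1(-\gamma)$, $M \le 2T$ and the hypothesis $2T\log c_6 \le Y$. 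Set $R = X_0^{D-L}S \in \bC[\uX]_D$; since $\cD(X_0)=0$, each $\cD^iR(1,\xi,\eta)$ equals $\cD^iS(1,\xi,\eta)$, and since the coefficient set of $R$ coincides with that of $S$, we have $\|R\| \le \cL(S) \le e^{Y-U}$. Thus $Q := P - R \in I_D^{(\gamma,T)}$ with $\|Q\| \le \|P\| + \|R\| \le 2 e^Y$.

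Given $P_j = Q_j + R_j$ for $j = 0, 1, 2$ as above, introduce
\[
 G(z) := \Res_D(Q_0 + zR_0,\, Q_1 + zR_1,\, Q_2 + zR_2),
\]
a polynomial in $z$ of degree at most $3D^2$ since $\Res_D$ has multi-degree $(D^2,D^2,D^2)$. By Corollary \ref{result:cor:1pt}, $G(z)/z^T$ is a polynomial, and the maximum principle on the closed disk of radius $r := e^U$ (assumed $\ge 1$; the case $U < 0$ follows directly from $\cC \subseteq e^Y\cB$ combined with Lemma \ref{prelim:lemma:heightB}) gives $|G(1)| \le r^{-T}\max_{|z|=r}|G(z)|$. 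For $|z|=r$, $\|Q_j + zR_j\| \le 2e^Y + e^U \cdot e^{Y-U} = 3e^Y$, and the second assertion of Lemma \ref{prelim:lemma:heightB} (with $m=2$) yields $|\Res_D(A_0,A_1,A_2)| \le \exp(18\log 3 \cdot D^3)(\|A_0\|\|A_1\|\|A_2\|)^{D^2}$. Combining these,
\[
 \log|\Res_D(P_0,P_1,P_2)| = \log|G(1)| \le -TU + 3D^2 Y + (18 D^3 + 3 D^2)\log 3 \le -TU + 3D^2 Y + 21\log 3 \cdot D^3,
\]
where the last inequality uses $18D^3 + 3D^2 \le 21 D^3$ for $D \ge 1$. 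Taking the supremum over $(P_0,P_1,P_2)\in\cC^3$ delivers the claimed estimate for $h_\cC(\bP^2)$.
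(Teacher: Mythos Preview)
Your proof is correct and essentially identical to the paper's own argument: the same interpolation via Proposition~\ref{basic:prop:interp} to split each $P_j$ into an $I_D^{(\gamma,T)}$-part plus a small correction, the same one-variable function $G(z)=\Res_D(Q_0+zR_0,\dots)$ (which, after renaming, coincides with the paper's $f(z)$), and the same Schwarz-lemma estimate on the disk of radius $e^U$ combined with the bound $h_\cB(\bP^2)\le 18\log(3)D^3$ from Lemma~\ref{prelim:lemma:heightB}. Your parenthetical about $U<0$ is unnecessary since the hypotheses already impose $U>0$, but it does no harm.
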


\begin{proof}
Let $\Res_D\colon\bC[\uX]_D^3\to \bC$ denote the generic resultant of $\bP^2$ in degree $D$.  Using the notation of Lemma \ref{prelim:lemma:heightB}, we have, by that lemma,
\begin{equation}
 \label{constr:prop:convex:eq2}
 h_\cB(\Res_D) = h_\cB(\bP^2) \le 18\log(3)D^3.
\end{equation}
By definition, we also have
\[
 h_\cC(\bP^2) = h_\cC(\Res_D) = \log \sup\{|\Res_D(P_0,P_1,P_2)|\,;\, P_0,P_1,P_2\in\cC\}.
\]
As $\cC$ is compact, there exist $P_0,P_1,P_2\in\cC$ for which $h_\cC(R) = \log |\Res_D(P_0,P_1,P_2)|$.

Let $L$ denote the smallest non-negative integer such that $T \le M:=\binom{L+2}{2}$.  Then, we have $L < M \le 2T$, and the first hypothesis in \eqref{constr:prop:convex:eq1} implies furthermore that $L < D$.  For this choice of $L$ and for each $j=0,1,2$, Proposition \ref{basic:prop:interp} ensures the existence of a unique polynomial $Q_j \in \bC[\uX]_L$ such that
\[
 \cD^iQ_j(1,\xi,\eta)
  = \begin{cases}
     \cD^iP_j(1,\xi,\eta) &\text{for $i=0,\dots,T-1$,}\\
     0 &\text{for $i=T,\dots,M-1$,}
     \end{cases}
\]
and shows that it has norm
\[
 \|Q_j\|
  \le c_1(-\gamma)^L 8^M \max_{0\le i\le T-1} |\cD^iP_j(1,\gamma)|
  \le (8c_1(-\gamma))^{2T} e^{-U}
  \le e^{Y-U}
\]
since $8c_1(-\gamma)=c_6$. By construction, the differences $P_j-X_0^{D-L}Q_j$ are elements of $I^{(\gamma,T)}_D$ and so, according to Corollary \ref{result:cor:1pt}, the polynomial
\[
 f(z) = \Res_D(P_0-(1-z)X_0^{D-L}Q_0, \dots , P_2-(1-z)X_0^{D-L}Q_2) \in \bC[z]
\]
vanishes to order at least $T$ at $z=0$.  Applying the standard Schwarz lemma, this leads to
\[
 \begin{aligned}
 \exp(h_\cC(\Res_D))
  &= |f(1)| \\
  &\le e^{-TU} \sup\{|f(z)| \,;\, |z|=e^U \} \\
  &\le e^{-TU} \sup\{|\Res_D(P_0',P_1',P_2')| \,;\, P_j'\in\bC[\uX]_D \text{ and } \|P_j'\| \le 3e^Y \} \\
  &\le e^{-TU} (3e^Y)^{3D^2} \exp(h_\cB(\Res_D)),
 \end{aligned}
\]
where the last estimate follows from the fact that $\Res_D$ is homogeneous of degree $D^2$ on each of its three arguments.  From this, we conclude using \eqref{constr:prop:convex:eq2}.
\end{proof}

\begin{proposition}
\label{constr:prop:subvar}
Let $D$, $T$, $Y$, $U$ and $\cC$ be as in Proposition \ref{constr:prop:convex}.  Define a real number $C>0$ by the condition $TU = C D^2 Y$ and suppose moreover that
\[
 5<C,
 \quad
 D\le T
 \et
 25\log(3)D \le Y.
\]
Finally, suppose that there exists a non-zero homogeneous polynomial $P\in\bZ[\uX]_D$ not divisible by $X_0$ nor by $X_2$ such that $\cD^i P \in \cC$ for $i=0,\dots,2T-1$.  Then, there exists a subvariety $Z$ of $\cZ(\cD^iP\,;\,0\le i< 2T)$ of dimension $0$ with
\begin{equation}
 \label{constr:prop:subvar:eq}
 h_\cC(Z) \le -C''(Y\deg(Z)+Dh(Z)),
\end{equation}
where $C''=(C-5)/6$.
\end{proposition}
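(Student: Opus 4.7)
The plan is to descend from $\bP^2$ to a zero-dimensional subvariety by two successive intersections with appropriately chosen derivatives $\cD^iP$, and then to extract, by a combinatorial averaging argument, a single irreducible component contained in $\cZ(\cD^iP\,;\,0\le i<2T)$.

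\emph{First step ($t=2 \to t=1$).} Since $P\ne 0$, choose $i_0 \in \{0,\dots,2T-1\}$ with $\cD^{i_0}P \ne 0$. By hypothesis $\cD^{i_0}P \in \bZ[\uX]_D \cap \cC$, so Proposition \ref{prelim:prop:inter} applied to $\bP^2$ and $\cD^{i_0}P$ produces a one-dimensional cycle $Z_1 = \bP^2 \cdot \div(\cD^{i_0}P)$ satisfying $\deg(Z_1) = D$, $h(Z_1) \le Y + 42\log(3)\,D$, and, via Proposition \ref{constr:prop:convex}, $h_\cC(Z_1) \le -TU + 3D^2Y + 25\log(3)D^3$.

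\emph{Second step ($t=1 \to t=0$).} Write $Z_1 = \sum_j m_j W_j$ with each $W_j$ irreducible, cut out by an irreducible homogeneous $R_j \in \bQ[\uX]$. Since $P$ is divisible by neither $X_0$ nor $X_2$ and since $D \le T$ gives $D < 2T$, Lemma \ref{result:lemma:gcd} forbids $R_j$ from dividing every $\cD^iP$ with $0 \le i < 2T$, for otherwise $R_j^{2T}$ would divide $P$, forcing $\deg(P) \ge 2T > D$. Hence some index $i_j$ with $\cD^{i_j}P \notin (R_j)$ exists, and Proposition \ref{prelim:prop:inter} can be applied to each $W_j$ separately with the polynomial $\cD^{i_j}P\in \bZ[\uX]_D\cap\cC$; summing the resulting $0$-cycles with multiplicities $m_j$ yields a $0$-cycle $Z_0$ with
\[
\deg(Z_0) = D^2,\qquad h(Z_0) \le 2DY + 66\log(3)D^2,\qquad h_\cC(Z_0) \le -TU + 3D^2Y + 27\log(3)D^3.
\]
Absorbing the $\log(3)D^k$ errors via $25\log(3)D \le Y$ and substituting $TU = CD^2Y$ simplifies these to $h_\cC(Z_0) \le -(C-5)D^2Y$ and $Y\deg(Z_0) + Dh(Z_0) \le 6D^2Y$, and combining the two yields the cycle-level bound $h_\cC(Z_0) \le -C''(Y\deg(Z_0) + Dh(Z_0))$.

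Finally, split $Z_0 = Z_{good} + Z_{bad}$, where $Z_{good}$ collects those components whose defining prime ideal contains every $\cD^iP$ with $0 \le i < 2T$. Each component of $Z_{bad}$ has some $\cD^iP \in \bZ[\uX]_D\cap\cC$ outside its ideal, so Proposition \ref{prelim:prop:dim0} forces $h_\cC\ge 0$ on that component; combined with $\deg,h \ge 0$ on subvarieties, this transfers the bound verbatim from $Z_0$ to $Z_{good}$. The hypothesis $C > 5$ makes $h_\cC(Z_0) < 0$, so $Z_{good}$ is nonempty, and the additivity of $h_\cC$, $\deg$, and $h$ in the components together with a one-line averaging argument produces an irreducible component $Z \subseteq \cZ(\cD^iP\,;\,0\le i<2T)$ for which \eqref{constr:prop:subvar:eq} holds. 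The main difficulty is the constant bookkeeping: verifying that the $\log(3)D^k$ errors coming from Lemma \ref{prelim:lemma:heightB} and from the two applications of Proposition \ref{prelim:prop:inter} are all dominated by the slack created by $25\log(3)D \le Y$, and that the factor $6$ relating $Y\deg(Z_0)+Dh(Z_0)$ to $D^2Y$ matches precisely the denominator in $C''=(C-5)/6$.
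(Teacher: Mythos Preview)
Your argument is correct and follows the same overall strategy as the paper: two successive intersections governed by Proposition~\ref{prelim:prop:inter}, Lemma~\ref{result:lemma:gcd} to guarantee that each intersection is proper, the height bound on $\bP^2$ from Proposition~\ref{constr:prop:convex}, and Proposition~\ref{prelim:prop:dim0} together with an averaging to single out a component.  The constant bookkeeping you describe is accurate and yields the same $C''=(C-5)/6$.

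The only organizational difference is the order of extraction.  The paper extracts an irreducible component \emph{after each} intersection step: it first passes from the $1$-cycle $Z'$ to a component $Z_1$ satisfying $h_\cC(Z_1)\le -C'D(Dh(Z_1)+3Y\deg(Z_1))$ with $C'=(C-4)/6$, then intersects $Z_1$ with a single $\cD^iP$ (found via the ``no common factor'' part of Lemma~\ref{result:lemma:gcd}), and only at the very end deduces $Z\subseteq\cZ(\cD^iP\,;\,0\le i<2T)$ from $h_\cC(Z)<0$ and Proposition~\ref{prelim:prop:dim0}.  You instead carry the full cycles through both intersections, split the resulting $0$-cycle into $Z_{good}+Z_{bad}$, and extract once at the end.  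Your route requires choosing a possibly different $\cD^{i_j}P$ for each one-dimensional component $W_j$, whereas the paper's early extraction lets it work with a single polynomial at each stage; conversely, the paper's use of $h_\cC(Z)<0$ to obtain containment is a bit slicker than your explicit good/bad split.  Both variants are sound.
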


\begin{proof}  Since $\bP^2$ has dimension $2$ with $\deg(\bP^2)=1$ and $h(\bP^2)=0$, and since $P$ is a non-zero element of $\bZ[\uX]_D\cap\cC$, Proposition \ref{prelim:prop:inter} ensures the existence of a non-zero cycle $Z'$ of $\bP_\bQ^2$ of dimension $1$ which satisfies
\[
 \deg(Z') = D,
 \quad
 h(Z') \le Y + 42\log(3)D \le 3Y
\]
and also, thanks to Proposition \ref{constr:prop:convex} and the above estimates,
\[
 \begin{aligned}
 h_\cC(Z')
   &\le h_\cC(\bP^2) + 4\log(3)D^3 \\
   &\le -CD^2Y + 3D^2Y + 25\log(3)D^3 \\
   &\le -(C-4)D^2Y \\
   &\le -C'D( Dh(Z') + 3Y\deg(Z')),
 \end{aligned}
\]
where $C'=(C-4)/6$.  From the last estimate and the additivity of the degree and heights on one-dimensional cycles, we deduce the existence of a component $Z_1$ of $Z'$ with
\[
 h_\cC(Z_1) \le -C'D( Dh(Z_1) + 3Y\deg(Z_1)).
\]
By Lemma \ref{result:lemma:gcd}, the polynomials $P,\cD P,\dots,\cD^DP$ have no common irreducible factor in $\bQ[\uX]$.  Therefore, at least one of them does not belong to the ideal of $Z_1$.  Since it has integral coefficients and since, by hypothesis, it belongs to $\cC$, Proposition \ref{prelim:prop:inter} ensures the existence of a non-zero cycle $Z''$ of $\bP_\bQ^2$ of dimension $0$ with
\[
 \begin{aligned}
 \deg(Z'')
  &= D\deg(Z_1), \\[3pt]
 h(Z'')
  &\le Dh(Z_1) + Y\deg(Z_1) + 24\log(3)D\deg(Z_1) \\
  &\le Dh(Z_1) + 2Y\deg(Z_1), \\[3pt]
 h_\cC(Z'')
   &\le h_\cC(Z_1) + 2\log(3)D^2\deg(Z_1) \\
   &\le -C'D( Dh(Z_1) + 3Y\deg(Z_1)) + (1/2)DY\deg(Z_1) \\
   &\le -C''( Dh(Z'') + Y\deg(Z'') ).
 \end{aligned}
\]
Thus, by linearity, at that least one component $Z$ of $Z''$ satisfies \eqref{constr:prop:subvar:eq}.  Since $C>5$, we have $h_\cC(Z)<0$.  So, by Proposition \ref{prelim:prop:dim0}, the ideal of $Z$ contains $\bZ[\uX]_D\cap \cC$ and so contains $\cD^iP$ for $i=0,\dots,2T-1$.
\end{proof}

\begin{lemma}
\label{constr:lemma:ZerosOutsideG}
Let $D\in\bN^*$ and $\alpha = (\alpha_0:\alpha_1:\alpha_2) \in \bP^2(\bC)$.  Suppose that there exists a non-zero polynomial $P\in\bC[\uX]_D$ not divisible by $X_0$ nor by $X_2$ such that $\cD^iP(\alpha)=0$ for $i=0,\dots,D$.  Then, either we have $\alpha_0\alpha_2\neq 0$ or $\alpha$ is one of the points $(0:1:0)$ or $(0:0:1)$.
\end{lemma}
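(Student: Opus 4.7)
The plan is to argue by contradiction, handling separately the cases $\alpha_0=0$ and $\alpha_2=0$. The essential observation is that the derivation $\cD$ stabilizes each of the ideals $(X_0)$ and $(X_2)$, since $\cD X_0 = 0$ and $\cD X_2 = X_2$. Consequently $\cD$ induces derivations $\bar\cD = X_2\,\partial/\partial X_2$ on $\bC[\uX]/(X_0) \cong \bC[X_1,X_2]$ and $\tilde\cD = X_0\,\partial/\partial X_1$ on $\bC[\uX]/(X_2) \cong \bC[X_0,X_1]$, and when $\alpha$ lies on the locus cut out by $X_0$ (resp.\ by $X_2$), the conditions $\cD^iP(\alpha)=0$ descend directly to statements about the corresponding reduction of $P$.

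In the first case, write $\bar P := P \bmod X_0 = \sum_{j+k=D} a_{jk}\, X_1^jX_2^k$. Since $\bar\cD(X_1^jX_2^k) = k\, X_1^jX_2^k$, the vanishing conditions for $i=0,\dots,D$ read
\[
 \sum_{k=0}^D k^i\,b_k = 0
 \quad (i=0,\dots,D),
 \qquad \text{where } b_k := a_{D-k,k}\,\alpha_1^{D-k}\alpha_2^k.
\]
The coefficient matrix $(k^i)$ is a Vandermonde matrix at the distinct nodes $0,1,\dots,D$, hence invertible, which forces $b_k=0$ for every $k$. If both $\alpha_1 \neq 0$ and $\alpha_2 \neq 0$, this forces every $a_{D-k,k}$ to vanish, so $\bar P = 0$ and therefore $X_0 \mid P$, contradicting the hypothesis. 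Thus either $\alpha_1 = 0$, giving $\alpha = (0:0:1)$, or $\alpha_2=0$, giving $\alpha = (0:1:0)$.

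In the second case, assume $\alpha_0 \neq 0$ and $\alpha_2 = 0$ and normalize $\alpha_0=1$. Write $\tilde P := P \bmod X_2$ and set $Q(Y) := \tilde P(1,Y) \in \bC[Y]$, a polynomial of degree at most $D$. A one-line induction on $i$ shows that $\tilde\cD^i \tilde P(1,Y) = Q^{(i)}(Y)$, so the hypotheses yield $Q^{(i)}(\alpha_1) = 0$ for $i=0,\dots,D$. Thus $Q$ has a zero of order at least $D+1$ at $\alpha_1$; since $\deg Q \le D$, this forces $Q \equiv 0$, whence $\tilde P \equiv 0$ by homogeneity, so $X_2 \mid P$, a contradiction. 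Combining the two cases yields the statement. No step presents any real obstacle; the main care is in bookkeeping which coefficients of $P$ survive the reductions modulo $X_0$ and $X_2$, and in verifying that the resulting systems are Vandermonde (first case) or a single Hermite interpolation (second case).
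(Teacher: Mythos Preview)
Your proof is correct and follows essentially the same approach as the paper's: both split into the cases $\alpha_0=0$ and $\alpha_2=0$, observe that $\cD$ reduces to $X_2\,\partial/\partial X_2$ (resp.\ $X_0\,\partial/\partial X_1$) on the corresponding hyperplane, and use a Vandermonde argument in the first case. In the second case the paper evaluates the explicit formula $\cD^iP(\ualpha)=\sum_j a_{j,0}\frac{(D-j)!}{(D-i-j)!}\alpha_0^{i+j}\alpha_1^{D-i-j}$ at $i=D-j_0$ to force $\alpha_0=0$, whereas you dehomogenize and phrase it as a degree-$\le D$ polynomial having a zero of order $D+1$; these are the same computation in different clothing.
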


\begin{proof}
Put $\ualpha=(\alpha_0,\alpha_1,\alpha_2)$ and write
\[
 P=\sum_{j+k\le D} a_{j,k} X_0^j X_1^{D-j-k} X_2^k.
\]

If $\alpha_0=0$, we have, for $i=0,1,\dots,D$,
\[
 0 = \cD^iP(\ualpha)
   = \Big( X_2 \frac{\partial}{\partial X_2} \Big)^i P(\ualpha)
   = \sum_{k=0}^D a_{0,k} k^i \alpha_1^{D-k} \alpha_2^k.
\]
As the matrix $\big( k^i \big)_{0\le i\le D,\, 0\le k\le D}$ is invertible, this yields $a_{0,k} \alpha_1^{D-k} \alpha_2^k = 0$ for $k=0,\dots,D$.  However, as $X_0\nmid P$, we also have $a_{0,k}\neq 0$ for at least one of these values of $k$, and thus we conclude that $\alpha_1\alpha_2=0$.

Similarly, if $\alpha_2=0$, we find, for $i=0,1,\dots,D$,
\[
 0 = \cD^iP(\ualpha)
   = \Big( X_0 \frac{\partial}{\partial X_1} \Big)^i P(\ualpha)
   = \sum_{j=0}^{D-i} a_{j,0} \frac{(D-j)!}{(D-i-j)!} \alpha_0^{i+j}\alpha_1^{D-i-j}.
\]
As $X_2\nmid P$, we also note that $a_{j,0}\neq 0$ for some $j$ with $0\le j \le D$.  If $j_0$ is the smallest such index then, for $i=D-j_0$, this yields $0= a_{j_0,0} (D-j_0)! \alpha_0^D$ and so $\alpha_0=0$.

These two facts show that, if $\alpha_0\alpha_2=0$, then either $\alpha_0=\alpha_1=0$ or $\alpha_0=\alpha_2=0$, another way of formulating the lemma.
\end{proof}

\begin{remark} Conversely, if $\alpha=(0:1:0)$ (resp.~$\alpha=(0:0:1)$), then, for any integer $D\ge 1$, the point $\alpha$ is a common zero of the polynomials $P, \cD P, \dots,\cD^DP$ where $P=X_0^D+X_2^D$ (resp.~$P=X_0^D+X_1^D$) is not divisible by $X_0$ nor by $X_2$.
\end{remark}

\begin{proposition}
\label{constr:prop:estimates_deg_height}
Let $D, T \in \bN^*$, let $P\in\bC[\uX]_D$ with $X_0\nmid P$ and $X_2\nmid P$, and let $Y\in \bR$.  Suppose that
\[
 T \le \binom{D+1}{2},
 \quad
 \max\{25\log(3)D, \log\|P\|,\log\|\cD P\|,\dots,\log\|\cD^DP\|\} \le Y,
\]
and that $W=\cZ(\cD^iP\,;\, 0\le i < D+T)$ is not empty.  Then any irreducible component $Z$ of $W$ in $\bP_\bQ^2$ has dimension $0$ with
\begin{equation}
 \label{constr:prop:estimates_deg_height:eq}
 \deg(Z) \le \frac{D^2}{T} \et h(Z) \le \frac{3DY}{T}.
\end{equation}
\end{proposition}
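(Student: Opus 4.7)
The proof proceeds in three steps: establishing $\dim Z = 0$, bounding $\deg Z$, and bounding $h(Z)$.

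First, $\dim Z = 0$ follows from Lemma~\ref{result:lemma:gcd}, whose proof extends verbatim to $\bC[\uX]$: the polynomials $P, \cD P, \dots, \cD^D P$ have no common irreducible factor in $\bC[\uX]$, so $W\subseteq\cZ(P,\cD P,\dots,\cD^DP)$ has finite zero set. By Lemma~\ref{constr:lemma:ZerosOutsideG}, each $\alpha\in Z(\bC)$ lies in $\cG$ or equals $(0{:}1{:}0)$ or $(0{:}0{:}1)$; in the latter case $\deg Z=1$ and $h(Z)=0$, and the conclusion is immediate. I therefore assume $Z(\bC)\subseteq\cG$.

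For the degree bound, I use a B\'ezout argument with multiplicities. Pick a generic linear combination $Q=\sum_{i=0}^D c_i\cD^iP$ with $c_i\in\bC$. For generic $(c_i)$, any irreducible common factor of $P$ and $Q$ must divide every $\cD^iP$ for $0\le i\le D$, hence is constant by Lemma~\ref{result:lemma:gcd}; so $P$ and $Q$ are coprime in $\bC[\uX]$, and $Q$ vanishes on $Z(\bC)$. At each $\alpha=(1{:}\xi{:}\eta)\in Z(\bC)\subseteq\cG$, I choose local analytic coordinates $(u,w)$ with $\cD=\partial/\partial u$---explicitly, $u = X_1/X_0-\xi$ and $w = \log(X_2/(\eta X_0))-u$ on a small neighborhood. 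The hypothesis $\cD^kP(\ualpha)=0$ for $0\le k<D+T$ translates, via Taylor expansion at $\alpha$, to the vanishing of the coefficients $[u^kw^0]P$ for those $k$, so that $\cD^kP\in(w,u^{D+T-k})\cdot\cO_\alpha^{\mathrm{an}}$. In particular $P\in(w,u^{D+T})$ and $Q\in(w,u^T)$, which gives
\[
\mathrm{mult}_\alpha(P,Q)=\dim_{\bC}\cO_\alpha^{\mathrm{an}}/(P,Q)\ge\dim_{\bC}\cO_\alpha^{\mathrm{an}}/(w,u^T)=T.
\]
B\'ezout's theorem for the coprime plane curves $\cZ(P)$ and $\cZ(Q)$ of degree $D$ yields $\sum_\beta\mathrm{mult}_\beta(P,Q)=D^2$; restricting the sum to $\beta\in Z(\bC)$ then produces $T\deg Z\le D^2$.

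For the height bound, I adapt the same intersection argument to the arithmetic setting. Intersecting $\bP^2$ successively with $\div(P)$ and $\div(Q)$ via Proposition~\ref{prelim:prop:inter} produces a zero-dimensional cycle $Z_2$ of degree $D^2$ and height $h(Z_2)\le D\log\|P\|+D\log\|Q\|+O(D^2)\le 2DY+O(DY)\le 3DY$, where the hypothesis $Y\ge 25\log(3)D$ absorbs the lower-order terms. Since $Z$ appears in $Z_2$ with multiplicity at least $T$ by the local analysis above, and heights of irreducible $0$-dimensional $\bQ$-subvarieties of $\bP^2$ are nonnegative, additivity of heights gives $T h(Z)\le h(Z_2)\le 3DY$, hence $h(Z)\le 3DY/T$.

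The main technical difficulty in the height step is that Proposition~\ref{prelim:prop:inter} requires integer-coefficient intersecting polynomials, whereas $P$ and $Q$ have complex coefficients. I expect this to be resolved either by extending the arithmetic intersection formalism of \cite{LR} to complex-coefficient divisors (tracking $\log\|\cdot\|$ at the archimedean place) or by substituting $P$ and $Q$ with integer-coefficient polynomials from $I(Z)_D\cap\bZ[\uX]$ having comparable norms and the same local-multiplicity behavior.
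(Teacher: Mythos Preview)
Your outline for $\dim Z=0$ and for the reduction to $Z(\bC)\subseteq\cG$ matches the paper. Your degree argument via local analytic intersection multiplicity is correct and is a reasonable alternative to the paper's method, though the paper reaches the same conclusion by a different route that also delivers the height bound for free (see below).

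The genuine gap is in the height step, and your two proposed fixes do not quite close it. Proposition~\ref{prelim:prop:inter} produces a $\bQ$-cycle with controlled standard height only when the intersecting divisor comes from $\bZ[\uX]_D$; with $P,Q\in\bC[\uX]_D$ the intersection $\div(P)\cdot\div(Q)$ is not in general a $\bQ$-cycle, so there is no $Z_2$ over $\bQ$ in which to count the multiplicity of $Z$, and no additivity argument for $h$. Replacing $P,Q$ by integer polynomials in $I(Z)_D$ is not available either: nothing in the hypotheses controls the norms of such polynomials by $Y$, and the local vanishing $\cD^iP(\alpha)=0$ for $0\le i<D+T$ need not transfer to them.

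The paper sidesteps this entirely by working at the level of Chow forms rather than intersection cycles. One picks $Q=\sum_{i=1}^D a_i\cD^iP$ (integers $|a_i|\le D$) coprime to $P$, and sets $F(R)=\Res_D(P,Q,R)$. If $G$ is the Chow form of $Z$ in degree $D$, then $G^T\mid F$ over $\bC$: for each $\alpha\in Z(\bC)$ one chooses, via Proposition~\ref{basic:prop:avoidingzeros}, a polynomial $R\in I_D^{(\alpha,T)}$ that is nonzero at the other zeros of $F$, and Corollary~\ref{result:cor:1pt} forces $\Res_D(P,Q,R+zS)$ to vanish to order $\ge T$ at $z=0$, giving exponent $\ge T$ for the linear factor $R(\ualpha)$ in $F$. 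This single divisibility yields both bounds at once: comparing degrees gives $T\deg(Z)\le D^2$, and a Gelfond-type inequality gives $T\,h_\cB(Z)\le h_\cB(F)+O(D^3)$, where $h_\cB(F)$ is controlled purely by $\|P\|,\|Q\|$ and $h_\cB(\bP^2)$ via homogeneity of $\Res_D$. No integrality of $P$ or $Q$ is needed anywhere in this chain, which is exactly what your approach was missing.
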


\begin{proof}
By Lemma \ref{result:lemma:gcd}, the polynomials $P,\cD P,\dots,\cD^DP$ are relatively prime as a set.  Since they are all homogeneous of degree $D$, we conclude that there exist integers $a_1,\dots,a_D$ of absolute values at most $D$ such that $Q=\sum_{i=1}^D a_i\cD^i P$ is relatively prime to $P$.  Then $\cZ(P,Q)$ has dimension $0$ and since $W$ is a closed subset of $\cZ(P,Q)$, it also has dimension $0$.

Let $Z$ be an irreducible component of $W$.  Since $Z\subseteq \cZ(\cD^iP\,;\, 0\le i\le D)$, Lemma \ref{constr:lemma:ZerosOutsideG} shows that either $Z(\bC)$ is contained in the open set $\cG$ of $\bP^2(\bC)$ or it consists of one of the points $(0:1:0)$ or $(0:0:1)$ (the points of $Z(\bC)$ are conjugate over $\bQ$).  In the latter case, we have $\deg(Z)=1$ and $h(Z)=0$, and the estimates \eqref{constr:prop:estimates_deg_height:eq} follow.  Thus, in order to prove these estimates, we may assume, without loss of generality that $Z(\bC)\subseteq \cG$.

Let $G\colon\bC[\uX]_D\to\bC$ be a Chow form of $Z$ in degree $D$, and let $F\colon\bC[\uX]_D\to\bC$ denote the map given by $F(R)=\Res_D(P,Q,R)$ for each $R\in\bC[\uX]_D$, where $\Res_D$ denote the resultant in degree $D$.  We claim that $G^T$ divides $F$.

To prove this claim, choose a system of representatives $\ualpha_1,\dots,\ualpha_s\in\bC^3\setminus\{0\}$ of the points of $Z(\bC)$ and complete it to a system of representatives $\ualpha_1,\dots,\ualpha_t$ of those of $\cZ(P,Q)(\bC)$.  Then, there exist $a,b\in\Cmult$ and $e_1,\dots,e_t\in\bN^*$ such that
\begin{equation}
 \label{constr:prop:estimates_deg_height:eq1}
 F(R) = a R(\ualpha_1)^{e_1} \cdots R(\ualpha_t)^{e_t}
 \et
 G(R) = b R(\ualpha_1) \cdots R(\ualpha_s)
\end{equation}
for each $R\in\bC[\uX]_D$.  Moreover, $e_1=\dots=e_s$ represents the multiplicity of $G$ as an irreducible factor of $F$ over $\bQ$. So, our claim reduces to showing that $e_1\ge T$.  Denote by $\alpha$ the point of $Z(\bC)$ corresponding to $\ualpha_1$.  According to Proposition \ref{basic:prop:avoidingzeros}, there exists a polynomial $R$ in $I^{(\alpha,T)}_D$ such that $R(\ualpha_i)\neq 0$ for $i=2,\dots,t$.  Since $P$ and $Q$ also belong to $I^{(\alpha,T)}_D$, Corollary \ref{result:cor:1pt} shows that the resultant in degree $D$ vanishes to order at least $T$ at the point $(P,Q,R)$.  Therefore, for any fixed $S\in\bC[\uX]_D$, the polynomial $F(R+zS)\in\bC[z]$ is divisible by $z^T$.  Choosing $S$ so that $S(\ualpha_1)\neq 0$, the formula \eqref{constr:prop:estimates_deg_height:eq1} for $F$ provides
\[
 F(R+zS)
  = a S(\ualpha_1)^{e_1} R(\ualpha_2)^{e_2}\cdots R(\ualpha_t)^{e_t} z^{e_1} + \cO(z^{e_1+1}),
\]
and therefore $e_1\ge T$.

Since $G^T$ divides $F$, we obtain
\begin{equation}
 \label{constr:prop:estimates_deg_height:eq2}
 T\deg(Z) = T\deg(G) \le \deg(F)=D^2
\end{equation}
which proves the first half of \eqref{constr:prop:estimates_deg_height:eq}.  In terms of the convex body $\cB$ of Lemma \ref{prelim:lemma:heightB}, we also find, thanks to \cite[Prop.~3.7 (i) and Lemma 3.3 (i)]{LR},
\[
 T h_\cB(Z) = T h_\cB(G) \le h_\cB(F)+ 2D^2\log\binom{D+2}{2} \le h_\cB(F)+ 2\log(3)D^3.
\]
To translate this inequality in terms of the standard height $h(Z)$, we first observe that Lemma \ref{prelim:lemma:heightB} and the degree estimate \eqref{constr:prop:estimates_deg_height:eq2} lead to
\[
 DTh(Z) \le Th_\cB(Z) + 4\log(3)DT\deg(Z) \le Th_\cB(Z) + 4\log(3)D^3.
\]
Moreover, since $F$ is obtained by specializing the first two arguments of a Chow form of $\bP^2$ into $P$ and $Q$ with $\|P\|\le e^Y$ and $\|Q\|\le D^2e^Y \le e^{D+Y}$, and since that Chow form is homogeneous of degree $D^2$ in each of its three arguments, we also find
\[
 h_\cB(F) \le D^2Y + D^2(D+Y) + h_\cB(\bP^2) \le 2D^2Y + 19\log(3)D^3,
\]
using the upper bound for $h_\cB(\bP^2)$ provided by Lemma \ref{prelim:lemma:heightB}.  Combining the last three estimates, we conclude that
\[
 DTh(Z) \le 2D^2Y + 25\log(3)D^3 \le 3D^2Y,
\]
which proves the second half of \eqref{constr:prop:estimates_deg_height:eq}.
\end{proof}

\begin{remark} By Lemma 4.2 and Proposition 7.1 of \cite{LR}, the map $F$ is a Chow form of the intersection product $\div(P)\cdot\div(Q)$ and is therefore divisible by $G^\ell$ where $\ell$ is the intersection multiplicity of $\div(P)$ and $\div(Q)$ in $Z$.  From there, one can also prove that $\ell \ge T$ based on a standard algebraic definition of that intersection multiplicity.
\end{remark}

%
%

\section{Proof of Theorem \ref{intro:thm:main}}
\label{sec:proof}

Let the notation and hypotheses be as in Theorem \ref{intro:thm:main}.  We also put $\gamma=(\xi,\eta)$ and use the notation of Section \ref{sec:distance}.  We shall argue by contradiction, assuming on the contrary that $(1:\gamma)$ is not a point of $\bP^2(\Qbar)$.    From there we proceed in several steps.

\subsection*{Step 1.} For each positive integer $D$, we define a convex body $\cC_D$ of $\bC[\uX]_D$ by
\[
 \cC_D
  = \Big\{ P\in\bC[\uX]_D \,;\,
       \|P\| \le \exp(2D^\beta)
       ,\,
       \max_{0\le i< \lfloor D^\tau \rfloor} |\cD^iP(\utheta)| \le \exp(-(1/2)D^\nu)
    \Big\},
\]
and denote by $\tP_D$ the homogeneous polynomial of $\bZ[\uX]_D$ determined by the condition
\[
 \tP_D(1,X_1,X_2) = X_1^a X_2^{-b} P_D(X_1,X_2)
\]
where $b$ stands for the largest integer such that $X_2^b$ divides $P_D$, and where $a=D-\deg(P_D)+b$.  Then, by construction, $\tP_D$ is not divisible by $X_0$ nor by $X_2$.  We claim that, for any sufficiently large $D$, the polynomials $\cD^j\tP_D$ with $0\le j < 2\lfloor D^\tau \rfloor$ all belong to $\cC_D$.\\

To prove this, fix a choice of integer $j$ with $0\le j< 2\lfloor D^\tau \rfloor$, and put $Q=\cD^j\tP_D$. Using Lemma \ref{basic:lemma:derivatives}, we find
\[
 \|Q\| \le D^j \cL(\tP_D) \le D^j (D+1)^2 \|P_D\| \le (D+1)^{2D^\tau+1} \exp(D^\beta) =\exp((1+o(1))D^\beta).
\]
Moreover, for any $i=0,\dots,\lfloor D^\tau \rfloor-1$, Leibniz' rule of differentiation for a product leads to
\[
 \begin{aligned}
   |\cD^i Q(1,\gamma)|
    &=|\cD_1^{i+j} (X_1^a X_2^{-b} P_D(X_1,X_2))|_{X_1=\xi,\,X_2=\eta} \\
    &\le \sum_{r+s+t=i+j} \frac{(i+j)!}{r!\,s!\,t!}\, |\cD_1^r X_1^a|_{X_1=\xi}\,
                          |\cD_1^s X_2^{-b}|_{X_2=\eta}\, |\cD_1^t P_D(\xi,\eta)| \\
    &\le \sum_{r+s+t=i+j} \frac{(i+j)!}{r!\,s!\,t!} a^r \max\{1,|\xi|\}^a\, b^s\, |\eta|^{-b}
         \max_{0\le k < 3\lfloor D^\tau \rfloor} |\cD_1^k P_D(\xi,\eta)| \\
    &\le \max\{1,|\xi|,|\eta|^{-1}\}^{a+b} (a+b+1)^{3\lfloor D^\tau \rfloor} \exp(-D^\nu) \\
    &= \exp(-(1-o(1))D^\nu).
 \end{aligned}
\]

\subsection*{Step 2.}  Define $\delta = \nu +\tau-2-\beta$, and fix an arbitrarily large integer $D$.  Then, put
\[
 T=\lfloor D^\tau \rfloor,  \quad  Y=2D^\beta,  \quad  U=D^\nu/2,
\]
and define a real number $C$ by the condition $TU=CD^2Y$.  Then, the convex set $\cC$ defined in Proposition \ref{constr:prop:convex} coincide with $\cC_D$ and, assuming that $D$ is sufficiently large, the hypotheses of Proposition \ref{constr:prop:subvar} are all fulfilled (because $1\le \tau < \min\{2,\beta\}$ and $\tau+\nu > 2+\beta$), and moreover we have $C''=(C-5)/6\ge D^\delta/25$.  Thus, there exists a $0$-dimensional subvariety $Z=Z_D$ of $\bP^2_\bQ$ contained in $\cZ(\cD^i\tP_D\,;\,0\le i< 2T)$ such that
\[
 h_{\cC_D}(Z) \le - \frac{D^\delta}{25} ( 2D^\beta\deg(Z) + Dh(Z) ).
\]
Let $\uZ$ be a set of representatives of the points of $Z(\bC)$ by elements of $\bC^3$ of norm $1$.  By Proposition \ref{prelim:prop:dim0}, we have
\[
 \sum_{\ualpha\in\uZ} \log \sup\{|P(\ualpha)| \,;\, P\in\cC_D\}
 \le
 h_{\cC_D}(Z) - Dh(Z) + 9\log(3)D\deg(Z).
\]
For each $\alpha\in Z(\bC)$ with corresponding point $\ualpha\in\uZ$, we also have, according to the definitions,
\[
 \sup\{|P(\ualpha)| \,;\, P\in\cC_D\}
  \ge
  \sup\{ |P(\ualpha)| \,;\, P\in I_D^{(\gamma,T)}, \|P\| \le 1\ \}
  = |I_D^{(\gamma,T)}|_\alpha.
\]
Now, let $\cU$ denote the set of points $\alpha$ of $Z(\bC)$ with $\dist(\alpha,(1:\gamma))\le (2c_2)^{-1}$.  For each $\alpha \in Z(\bC) \setminus \cU$, Proposition \ref{distance:prop:min|I|} gives
\[
 |I_D^{(\gamma,T)}|_\alpha
   \ge c_5^{-T} T^{-6T\log(T)} \dist(\alpha,(1:\gamma))^T
   \ge (2c_2c_5)^{-T} T^{-6T\log(T)}
   \ge T^{-7T\log(T)},
\]
assuming $D$ large enough so that $T\ge 2c_2c_5$.  For the more interesting points $\alpha\in\cU$, it gives
\[
 \begin{aligned}
 |I_D^{(\gamma,T)}|_\alpha
   &\ge c_4^{-1} c_5^{-T} T^{-6T\log(T)} \max\{\dist(\alpha,(1:\gamma))^T,\dist(\alpha,A_\gamma)\}\\
   &\ge T^{-7T\log(T)} \max\{\dist(\alpha,(1:\gamma))^T,\dist(\alpha,A_\gamma)\},
 \end{aligned}
\]
provided that $D$ is large enough.  Putting all these estimates together, taking into account that $Z(\bC)$ consists of $\deg(Z)$ points, we conclude that
\[
 \begin{aligned}
 \sum_{\alpha\in\cU} \max\{ T\log\dist(\alpha,(1:\gamma)), \log\dist(\alpha,A_\gamma) \}
 &\le
 \sum_{\alpha\in Z} \log |I_D^{(\gamma,T)}|_\alpha + 7 T (\log T)^2 \deg(Z) \\
 &\le - \frac{D^\delta}{25} ( D^\beta\deg(Z) + Dh(Z) )
 \end{aligned}
\]
if $D$ is large enough (because $\beta > \tau \ge 1$).  In particular, the set $\cU$ is not empty and contains at least one point $\alpha_0$ for which $\log \dist(\alpha_0,(1:\gamma)) \le -D^{\delta+\beta}/(25T)$.   Thus, as $D$ goes to infinity, the point $\alpha_0$ runs through an infinite sequence of points of $\bP^2(\Qbar)$ converging to $(1:\gamma)$ but distinct from $(1:\gamma)$ (because $(1:\gamma)\notin\bP^2(\Qbar)$).

\subsection*{Step 3.}  Denote by $D^*$ the smallest positive integer for which
\[
 Z \subseteq \cZ\big( \cD^i \tP_{D^*+1}\,;\, 0\le i < 2\lfloor(D^*+1)^\tau\rfloor \big).
\]
If $D\ge 2$, such an integer exists and is at most equal to $D-1$. Moreover, $D^*$ goes to infinity with $D$  because, as $\tP_{D^*+1}$ is not divisible by $X_0$ nor by $X_2$, it follows from Lemma \ref{result:lemma:gcd} that $\cZ(\cD^i \tP_{D^*+1}\,;\, 0\le i\le D^*+1)(\bC)$ is a finite subset of $\bP^2(\Qbar)$ and so, for fixed $D^*\ge 1$, this set does not contain the point $\alpha_0$ of $Z(\bC)$ when $D$ is large enough. Thus, assuming $D$ large enough, it follows from Step 1 that $\cD^i\tP_{D^*+1}$ belongs to $\cC_{D^*+1}$ for $i=0,\dots,D^*+1$ and consequently
\[
 \max\{25\log(3)(D^*+1),\,\log\|\tP_{D^*+1}\|,\dots,\log\|\cD^{D^*+1}\tP_{D^*+1}\|\}
 \le
 2(D^*+1)^\beta.
\]
For $D$ large enough, we also have $\lfloor(D^*+1)^\tau\rfloor \le \binom{D^*+2}{2}$ and, by Proposition \ref{constr:prop:estimates_deg_height}, we conclude that
\[
 \deg(Z) \le \frac{(D^*+1)^2}{\lfloor(D^*+1)^\tau\rfloor}\le 2(D^*)^{2-\tau}
 \et
 h(Z) \le \frac{6(D^*+1)^{1+\beta}}{\lfloor(D^*+1)^\tau\rfloor}\le 7(D^*)^{1+\beta-\tau}.
\]

\subsection*{Step 4.}  For $D$ large enough, we have $D^*\ge 2$ and so, by the very choice of $D^*$, there exists an integer $i_0$ with $0\le i_0 < 2\lfloor(D^*)^\tau\rfloor$ such that $Z$ is not contained in the curve of $\bP^2$ defined by the polynomial $P^*:=\cD^{i_0} \tP_{D^*}$.  For $D$ large enough, we also have $P^*\in\cC_{D^*}\cap\bZ[\uX]$.  Then, Proposition \ref{prelim:prop:dim0} gives
\[
 0 \le 7\log(3)D^*\deg(Z) + D^*h(Z) + \sum_{\ualpha\in\uZ}\log|P^*(\ualpha)|.
\]
Moreover, the fact that $P^*\in\cC_{D^*}$ leads to the crude estimate
\[
 \max_{\ualpha\in\uZ}\log|P^*(\ualpha)|
  \le D^*\log(3) + \log \|P^*\|
  \le 4(D^*)^\beta.
\]
Combining the last two results , we deduce that, for $D$ large enough,
\begin{equation}
 \label{step4:eq1}
 \sum_{\ualpha\in\uZ}\min\{0,\log|P^*(\ualpha)|\}
  \ge - 5(D^*)^\beta \deg(Z) - D^*h(Z).
\end{equation}

Put $T^*=\lfloor(D^*)^\tau\rfloor$.  Then, for a point $\alpha\in\cU$ with representative $\ualpha\in\uZ$, Proposition \ref{distance:prop:devP} provides the more precise estimate
\[
 \begin{aligned}
 |P^*(\ualpha)|
  &\le
  c_4\max_{0\le i <T^*} |\cD^iP^*(1, \gamma)|
  + c_4^{D^*}\|P^*\| \big(\dist(\alpha,(1:\gamma))^{T^*}+\dist(\alpha,A_\gamma)\big)\\
  &\le
  c_4e^{-(D^*)^\nu/2}
  + c_4^{D^*}e^{2(D^*)^\beta} \big(\dist(\alpha,(1:\gamma))^{T^*}+\dist(\alpha,A_\gamma)\big).
 \end{aligned}
\]
However, if $D^*$ is large enough, the inequality \eqref{step4:eq1} combined with the estimates for $\deg(Z)$ and $h(Z)$ obtained in Step 3 leads to
\[
 \log |P^*(\ualpha)|
   \ge - 5(D^*)^\beta \deg(Z) - D^*h(Z)
   \ge - 17(D^*)^{2+\beta-\tau},
\]
thus $|P^*(\ualpha)| \ge 2c_4e^{-(D^*)^\nu/2}$, and so
\[
 \log |P^*(\ualpha)|
  \le
  3(D^*)^\beta +\max\big\{ T^*\log\dist(\alpha,(1:\gamma)),\, \log\dist(\alpha,A_\gamma)\big\}.
\]
Note that this holds for any $\alpha\in\cU$ with a lower bound on $D^*$ not depending on $\alpha$.  Therefore, if $D$ is large enough, we conclude using \eqref{step4:eq1} that, for any subset $\cS$ of $\cU$, we have
\begin{equation*}
 \sum_{\alpha\in\cS} \max\big\{ T^* \log\dist(\alpha,(1:\gamma)),\, \log\dist(\alpha,A_\gamma)\big\}
  \ge - 8(D^*)^\beta \deg(Z) - D^*h(Z).
\end{equation*}

\subsection*{Step 5.}
According to the last estimate from Step 2, we have
\[
 T \sum_{\alpha\in\cU'} \log\dist(\alpha,(1:\gamma))
 + \sum_{\alpha\in\cU''} \log\dist(\alpha,A_\gamma)
  \le - \frac{D^\delta}{25} ( D^\beta\deg(Z) + Dh(Z) )
\]
for any partition of $\cU$ into disjoint subsets $\cU'$ and $\cU''$.  We choose
\[
 \cU' = \{ \alpha\in\cU \,;\, T^* \log\dist(\alpha,(1:\gamma)) \ge \log\dist(\alpha,A_\gamma)\}
 \et
 \cU''= \cU\setminus\cU'.
\]
Then, the last estimate of Step 4 applied to the sets $\cU'$ and $\cU''$ gives respectively
\[
 \begin{aligned}
 T^* \sum_{\alpha\in\cU'} \log\dist(\alpha,(1:\gamma))
  &\ge - 8(D^*)^\beta \deg(Z) - D^*h(Z),\\
 \sum_{\alpha\in\cU''} \log\dist(\alpha,A_\gamma)
  &\ge - 8(D^*)^\beta \deg(Z) - D^*h(Z).
 \end{aligned}
\]
Combining these three inequalities, we obtain
\[
 - \frac{D^\delta}{25} ( D^\beta\deg(Z) + Dh(Z) )
 \ge - \Big( \frac{T}{T^*} + 1 \Big) \big( 8(D^*)^\beta \deg(Z) + D^*h(Z) \big)
\]
and so
\[
 D^{\delta+\beta}\deg(Z) + D^{\delta+1}h(Z) \ll D^\tau (D^*)^{1-\tau} h(Z)
\]
(we may omit the term $D^\tau (D^*)^{\beta-\tau} \deg(Z)$ in the right hand side as it is negligible with respect to $D^{\delta+\beta}\deg(Z)$).  This means that
\begin{equation}
 \label{step5:eq1}
 D^{\delta+\beta-\tau}\deg(Z) \ll (D^*)^{1-\tau} h(Z)
 \et
 (D^*)^{\tau-1} \ll D^{\tau-\delta-1}.
\end{equation}
Since $\deg(Z)\ge 1$ and $h(Z)\ll (D^*)^{1+\beta-\tau}$ (see Step 3), the first estimate in \eqref{step5:eq1} implies that
\[
 D^{\delta+\beta-\tau} \ll (D^*)^{\beta +2-2\tau}.
\]
As $\tau\ge 1$, combining this with the second estimate from \eqref{step5:eq1} yields
\[
 (\tau-1)(\delta+\beta-\tau) \le (\tau-\delta-1)(\beta +2-2\tau)
\]
which after simplifications is equivalent to $\delta \le (\tau-1)(2-\tau)/(\beta+1-\tau)$.  This contradicts  the hypothesis on $\nu$ in \eqref{intro:thm:main:eq1}, and therefore proves that $\xi,\eta\in\Qbar$.  The remaining assertion of Theorem \ref{intro:thm:main} follows from this fact, as explained in the introduction.

%
%

\end{document}